\newtheoremstyle{myremark} 
    {7pt}                    
    {7pt}                    
    {}  	                 
    {}                           
    {\bf}       	         
    {.}                          
    {.5em}                       
    {}  
\def\paragraph{\@startsection{paragraph}{4}%
  \z@\z@{-\fontdimen2\font}%
  {\normalfont\bfseries}}
\theoremstyle{plain}
\newtheorem{lemma}{Lemma}[section]
\newtheorem{claim}[lemma]{Claim}
\newtheorem{corollary}[lemma]{Corollary}
\newtheorem{proposition}[lemma]{Proposition}
\newtheorem{theorem}[lemma]{Theorem}
\newtheorem*{theorem-main}{Theorem~\ref{thm:main}}
\newtheorem*{theorem-secondary}{Theorem~\ref{thm:secondary}}
\theoremstyle{definition}
\newtheorem{conjecture}[lemma]{Conjecture}
\newtheorem{definition}[lemma]{Definition}
\newtheorem{example}[lemma]{Example}
\newtheorem{question}[lemma]{Question}
\theoremstyle{myremark}
\newtheorem{remark}[lemma]{Remark}
\newcommand{\N}{\mathbb{N}}
\newcommand{\R}{\mathbb{R}}
\newcommand{\F}{\mathbb{F}}
\newcommand{\cl}{\mathrm{Cl}}
\newcommand{\diredge}{\rightarrow}
\newcommand{\vrless}[2]{\mathrm{VR}_<(#1;#2)}
\newcommand{\vrleq}[2]{\mathrm{VR}_\leq(#1;#2)}
\newcommand{\vr}[2]{\mathrm{VR}(#1;#2)}
\newcommand{\vrcless}[2]{\mathbf{VR}_<(#1;#2)}
\newcommand{\vrcleq}[2]{\mathbf{VR}_\leq(#1;#2)}
\newcommand{\vrc}[2]{\mathbf{VR}(#1;#2)}
\newcommand{\wf}{\mathrm{wf}}
\newcommand{\cnk}[2]{C_{#1}^{#2}}
\newcommand{\olen}{\ell}
\newcommand{\numorb}{P}
\newcommand{\hit}{\mathcal{H}}
\newcommand{\fin}{\mathrm{Fin}}
\newcommand{\tfin}
{\widetilde{\mathrm{Fin}}}
\newcommand{\tW}{\widetilde{W}}
\newcommand{\tG}{\tilde{G}}
\newcommand{\tf}{\tilde{f}}
\newcommand{\tr}{\tilde{r}}
\newcommand{\tu}{\tilde{u}}
\newcommand{\tv}{\tilde{v}}
\newcommand{\dv}{d_V} 
\newcommand{\ds}{\vec{d}_{S^1}} 
\newcommand{\dpn}{\vec{d}_{P_n}} 
\newcommand{\gav}{\gamma_V} 
\newcommand{\gas}{\gamma} 
\newcommand{\gv}{g} 
\newcommand{\gpn}{g} 
\newcommand{\fs}{f}
\renewcommand{\sl}{s_{2l+1}} 
\newcommand{\qnl}{q_{n,l}}
\renewcommand{\r}{\rightarrow}
\newcommand{\norm}[1]{\|#1\|} 
\DeclareMathOperator*{\argmax}{arg\,max}
\DeclareMathOperator{\lcm}{lcm}
\DeclareMathOperator{\PH}{PH}
\DeclareMathOperator{\VR}{VR}
\newcommand{\image}{\mathrm{im}}
\newcommand{\rank}{\mathrm{rank}}
\newcommand{\diam}{\mathrm{diam}}
\newcommand{\proj}{\mathrm{proj}}
\begin{document}
\title{Vietoris--Rips complexes of regular polygons}
\author{Henry Adams}
\address{Department of Mathematics, Colorado State University, Fort Collins, CO 80523, United States}
\email{adams@math.colostate.edu}
\author{Samir Chowdhury}
\address{Department of Mathematics, Ohio State University, Columbus, OH 43210, United States}
\email{chowdhury.57@osu.edu}
\author{Adam Quinn Jaffe}
\address{Department of Mathematics, Stanford University, Stanford, CA 94305, United States}
\email{aqjaffe@stanford.edu}
\author{Bonginkosi Sibanda}
\address{Department of Mathematics, Brown University, Providence, RI 02912, United States}
\email{bonginkosi\_sibanda@brown.edu}
\thanks{This material is based upon work supported by the National Science Foundation under Grant No.\ DMS-1439786 while the authors were in residence at the Institute for Computational and Experimental Research in Mathematics in Providence, RI, during the Summer\MVAt ICERM 2017 program. While in residence at Summer\MVAt ICERM 2017, Bonginkosi Sibanda was also supported by The Karen T.\ Romer Undergraduate Teaching and Research Awards.}
\keywords{persistent homology, regular polygons, Vietoris--Rips complex, flag complex, homotopy type.}

\begin{abstract}
Persistent homology has emerged as a novel tool for data analysis in the past two decades. However, there are still very few shapes or even manifolds whose persistent homology barcodes (say of the Vietoris--Rips complex) are fully known. Towards this direction, let $P_n$ be the boundary of a regular polygon in the plane with $n$ sides; we describe the homotopy types of Vietoris--Rips complexes of $P_n$. Indeed, when $n=(k+1)!!$ is an odd double factorial, we provide a complete characterization of the homotopy types and persistent homology of the Vietoris--Rips complexes of $P_n$ up to a scale parameter $r_n$, where $r_n$ approaches the diameter of $P_n$ as $n\to\infty$. Surprisingly, these homotopy types include spheres of all dimensions. Roughly speaking, the number of higher-dimensional spheres appearing is linked to the number of equilateral (but not necessarily equiangular) stars that can be inscribed into $P_n$. As our main tool we use the recently-developed theory of cyclic graphs and winding fractions.
Furthermore, we show that the Vietoris--Rips complex of an arbitrarily dense subset of $P_n$ need not be homotopy equivalent to the Vietoris--Rips complex of $P_n$ itself, and indeed, these two complexes can have different homology groups in arbitrarily high dimensions. As an application of our results, we provide a lower bound on the Gromov--Hausdorff distance between $P_n$ and the circle.
\end{abstract}
\maketitle


\section{Introduction}

Let $S^1=\{(x,y)\in\R^2~|~x^2+y^2=1\}$ be the unit circle in the plane. Given an integer $n\ge 3$, let $P_n\subseteq \R^2$ be the boundary of the regular polygon inscribed in $S^1$ with $n$ vertices. In other words, $P_n$ is a piecewise-linear closed curve in the complex plane with vertices the $n$-th roots of unity. We equip $P_n$ with the Euclidean metric. What can be said about the homotopy types and the persistent homology of the Vietoris--Rips simplicial complexes $\vrc{P_n}{r}$?

One motivation for such questions is the application of topology to data analysis~\cite{EdelsbrunnerHarer,Carlsson2009}.
Suppose one is given a finite data set $X$ sampled from some unknown underlying infinite metric space $M$, and would like to use $X$ to recover information about $M$. For example, one can use \emph{persistent homology} to attempt to recover the homology groups of $M$~\cite{EdelsbrunnerHarer,ChazalOudot2008}. Though data set $X$ is typically finite, as the density of $X$ increases, the persistent homology of $\vrc{X}{r}$ converges to the persistent homology of $\vrc{M}{r}$~\cite{ChazalDeSilvaOudot2013}. Hence Vietoris--Rips complexes of infinite metric spaces are important, as they are the limiting objects of Vietoris--Rips complexes of finite data sets. Nevertheless, extremely little is known about the persistent homology of Vietoris--Rips complexes of basic shapes. In this paper, we develop the tools necessary to describe the homotopy types and persistent homology of Vietoris--Rips complexes of regular polygons.

Another motivation for such questions is the application of topology to machine learning.
One reason for incorporating more mathematics in machine learning is to try to improve not only the predictive power of machine learning algorithms, but also their interpretability. 
There are by now a wide variety of ways to turn the output from persistent homology into feature vectors for a machine learning task; see for example~\cite{adams2017persistence,bubenik2015statistical,carriere2018metric,chazal2018density,chevyrev2018persistence,di2015comparing,hofer2017deep,kalivsnik2018tropical,reininghaus2015stable,vskraba2018persistent}.
Nevertheless, the interpretation of persistent homology \emph{barcodes} (or equivalently \emph{diagrams}, see~\cite{chazal2016structure}) at larger scale parameters is still under development, even though we expect to be able to extract more geometric information about a space by considering its persistence not only at small scales but also at large scales~\cite[Section~12]{virk20171}. As a starting example, in this paper we detail the precise geometric information that higher-dimensional persisistent homology measures in a regular polygon.

Our main tool will be the structure of \emph{cyclic graphs} and their clique complexes. Roughly speaking, a cyclic graph is a directed graph in which the vertex set is equipped with a cyclic order, such that whenever there is a directed edge $u\to v$, then there is also a directed edge $u\to w\to v$ for all vertices $w$ cyclically between $u$ and $v$. The clique complex (of the underlying undirected graph) is the simplicial complex on the same vertex set, with faces given by the cliques (complete subgraphs) of the undirected graph. If $G$ is a cyclic graph then it is known that $\cl(G)$ is either an odd sphere or a wedge sum of even spheres of the same dimension~\cite{AAFPP-J}. Quantitative control over the dimension of the spheres is given in~\cite{AA-VRS1} using the \emph{winding fraction} of $G$.

For each $n\ge 4$, there exists a scale parameter $r_n>0$ such that the 1-skeleton of $\vrc{P_n}{r}$ is a cyclic graph for all $r< r_n$. This allows us to partially describe the homotopy types of $\vrc{P_n}{r}$ when $r< r_n$. The main result of our paper is the following theorem, where explicit formulas for the scale parameters
\[ 0=t_{n,0}\le s_{n,1}\le t_{n,1}\le s_{n,2}\le t_{n,2}\le\ldots\le s_{n,l}\le t_{n,l} \]
are given in Remark~\ref{rem:expl-sl}.

\begin{theorem-main}
Suppose $n=q(2l+1)$ is a multiple of $2l+1$. Let $r<r_n$. Then
\begin{align*}
\vrcless{P_n}{r}&\simeq\begin{cases}
\bigvee^{q-1}S^{2l}&\mbox{ when }s_{n,l}<r\le t_{n,l}\\
S^{2l+1}&\mbox{ when }t_{n,l}<r\le s_{n,l+1}
\end{cases}\\
\vrcleq{P_n}{r}&\simeq\begin{cases}
\bigvee^{q-1}S^{2l}&\mbox{ when }r=s_{n,l}\\ 
\bigvee^{3q-1}S^{2l}&\mbox{ when }s_{n,l}<r< t_{n,l}\\
\bigvee^{2q-1}S^{2l}&\mbox{ when }r=t_{n,l}\\
S^{2l+1}&\mbox{ when }t_{n,l}<r< s_{n,l+1}.
\end{cases}
\end{align*}
Furthermore, when $r<\tr<r_n$,
\begin{itemize}
\item For $s_{n,l}<r<\tr\le t_{n,l}$ or $t_{n,l}<r<\tr\le s_{n,l+1}$, inclusion $\vrcless{P_n}{r}\hookrightarrow\vrcless{P_n}{\tr}$ is a homotopy equivalence.
\item For $t_{n,l}<r<\tr< s_{n,l+1}$, inclusion $\vrcleq{P_n}{r}\hookrightarrow\vrcleq{P_n}{\tr}$ is a homotopy equivalence.
\item For $s_{n,l}\le r<\tr\le t_{n,l}$, inclusion $\vrcleq{P_n}{r}\hookrightarrow\vrcleq{P_n}{\tr}$ induces a rank $q-1$ map on $2l$-dimensional homology $H_{2l}(-;\F)$ for any field $\F$.
\end{itemize}
\end{theorem-main}

\begin{figure}[h]
\centering
\includegraphics[width=0.7\textwidth]{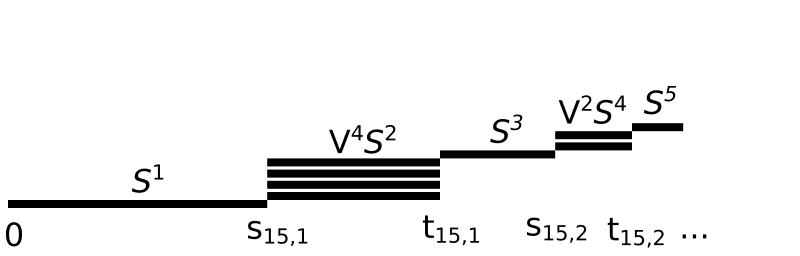}
\includegraphics[width=0.7\textwidth]{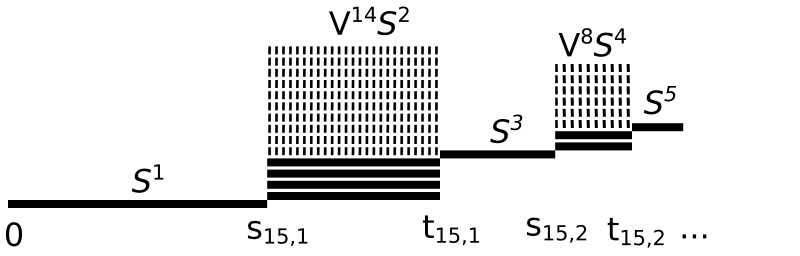}
\caption{(Top) The persistent homology of $\vrcless{P_{15}}{r}$ for $0\le r\le r_n$. The value of $r$ is on the horizontal axis, and increasing from left to right. (Bottom) The persistent homology of $\vrcleq{P_{15}}{r}$, in which the dotted lines correspond to \emph{ephemeral} summands.}
\label{fig:P15PersistentHomotopyRips}
\end{figure}

We remark that the $2l$-dimensional persistent homology module of $\vrcleq{P_n}{r}$ has an \emph{ephemeral summand} (\cite{ChazalCrawley-BoeveyDeSilva}) of rank $2q$ over the interval of scale parameters $s_{n,l}<r< t_{n,l}$. As a consequence of the main theorem, we can completely describe the homotopy types and the persistent homology of $\vrc{P_n}{r}$ when $n=(2k+1)!!=(2k+1)\cdot(2k-1)\cdot\ldots\cdot 5\cdot 3\cdot 1$ and the scale parameter is less than $r_n$; see Corollary~\ref{cor:main2}.

\begin{remark}
\label{rem:results}
In addition to Theorem~\ref{thm:main}, we also prove an analogous result for the case $l = 1$, cf.\ Theorem~\ref{thm:main-1}. The proof of the case of general $l$ is presented in Theorem~\ref{thm:main-general}, but this general case relies on our Conjecture~\ref{conj:monotonic}. Even though we have experimental results suggesting the validity of this conjecture, a full proof remains open.
\end{remark}

Let $S^1$ be the unit circle equipped with the Euclidean metric. As $n\to\infty$, the metric polygons $P_n$ converge in the Gromov-Hausdorff distance to the circle $S^1$. It follows from stability~\cite{ChazalDeSilvaOudot2013} that the persistent homology of $\vrc{P_n}{r}$ converges to that of $\vrc{S^1}{r}$. The persistent homology of $\vrc{S^1}{r}$ is known: as $r$ increases, $\vrc{S^1}{r}$ obtains the homotopy types of $S^1$, $S^3$, $S^5$, $S^7$, \ldots, until finally it is contractible~\cite{AA-VRS1}. In this paper we study the topological features present in the persistent homology of $\vrc{P_n}{r}$ before achieving convergence. One interesting observation is that even though $\vrc{S^1}{r}$ contains an $i$-dimensional persistent homology interval of positive length if and only if $i$ is odd, the polygon shapes $P_n$ provide examples of arbitrarily close metric spaces homeomorphic to $S^1$ which can contain nontrivial persistent $i$-dimensional homology even for $i$ even and arbitrarily large.

The stability of persistent homology guarantees that the Gromov-Hausdorff distance between two compact metric spaces is bounded below by half the \emph{bottleneck distance} between the $i$-dimensional persistent homology barcodes of the Vietoris-Rips complexes of the two spaces \cite{ChazalDeSilvaOudot2013}. The Gromov-Hausdorff distance is in general NP-hard to compute~\cite{memoli2007use}, whereas both the persistent homology and bottleneck distance computations can be carried out in polynomial time~\cite{edelsbrunner2000topological,zomorodian2005computing,munkres1957algorithms}. Therefore, knowledge of the persistent homology of $\vrc{P_n}{r}$ enables us to provide a lower bound on the Gromov-Hausdorff distance between $S^1$ and $P_n$ (Section~\ref{sec:GH}).

As a secondary result, we study finite subsets of the regular polygons. This work is related to Latschev's Theorem~\cite[Theorem~1.1]{Latschev2001}, which states that if $M$ is a Riemannian manifold and scale $r$ is sufficiently small, then for any sufficiently dense $X\subseteq M$ we have a homotopy equivalence $\vrcless{X}{r}\simeq M$. By Hausmann's Theorem~\cite{Hausmann1995} we also have $M \simeq \vrcless{M}{r}$, giving $\vrcless{X}{r}\simeq\vrcless{M}{r}$ for $r$ sufficiently small and $X$ sufficiently dense. In Question~\ref{ques:Latschev-higher} we ask: for $M$ a Riemannian manifold, is it also true at larger scale parameters $r$ that $\vrcless{X}{r}\simeq\vrcless{M}{r}$ for $X$ sufficiently dense depending on $r$? This is known to be true in the case when $M=S^1$ is the circle~\cite{AA-VRS1}, but to our knowledge this question is unknown for a general Riemannian manifold. Our Theorem~\ref{thm:secondary}(ii) shows that Question~\ref{ques:Latschev-higher} has a negative answer when $M$ is not Riemannian, for example if $M=P_n$ is a regular polygon equipped with the Euclidean metric.

\begin{theorem-secondary}
Let $l\ge 1$, $n\geq 4l + 2$, and suppose $n = q(2l+1)$. Then for any $\varepsilon>0$, $z\ge q$, and $s_{n,l}<r<t_{n,l}$, there is an $\varepsilon$-dense finite subset $X\subseteq P_n$ such that $\vrc{X}{r} \simeq \bigvee^{z-1} S^{2l}$.
\end{theorem-secondary}

A version of Theorem~\ref{thm:secondary} is also true when the underlying polygonal metric space $P_n$ is instead replaced with an ellipse of sufficiently small eccentricity~\cite[Theorem~7.2]{AAR}, but only in the case $l=1$ (which corresponds to wedges of 2-dimensional spheres). The polygons provide examples where arbitrarily dense subsets of a metric space can have Vietoris--Rips complexes that obtain the homotopy type of a wedge sum of an arbitrary number of spheres, where the spheres are now of an arbitrarily high (even) dimension. That is, the polygons are examples where the homology of arbitrarily dense finite subsets is ``unstable'' in arbitrarily high homological dimensions.

A key contribution of our paper is the development of metric cyclic graphs (Section~\ref{sec:metric}), which sharpen the theory of infinite cyclic graphs in the metric setting. Metric cyclic graphs are more broadly applicable beyond the primary example of regular polygons in this paper; for example metric cyclic graphs generalize the framework used in the case of ellipses~\cite{AAR}, and they also potentially relate to some of the future work discussed in~\cite[Section~12]{virk20171}.

The remainder of our paper is organized as follows. In Section~\ref{sec:preliminaries} we introduce preliminaries and notation, and in Section~\ref{sec:cyclic} we review cyclic graphs. In Section~\ref{sec:metric} we introduce our main tool, metric cyclic graphs. We provide the necessary geometric lemmas about regular polygons in Section~\ref{sec:geometric}, and in Section~\ref{sec:vr} we give our main results about the homotopy types and persistent homology of Vietoris--Rips complexes of regular polygons. In Section~\ref{sec:subsets} we give more detailed results for finite subsets from a polygon. As an application of our results, in Section~\ref{sec:GH} we give a lower bound on the Gromov--Hausdorff distance between the regular polygon and the circle.

\section{Preliminaries and notation}\label{sec:preliminaries}

\subsection*{Notation for the plane}

We denote the length of a Euclidean vector $x\in\R^2$ by $\|x\|$, and therefore the Euclidean distance between two points $x,y\in\R^2$ is $\|x-y\|$.
We let $B(x,r)$ be the open ball in $\R^2$ with center $x\in \R^2$ and radius $r>0$. Likewise, we let $\overline{B(x,r)}$ be the corresponding closed ball. 

\subsection*{Notation for the circle}

We write $S^1$ to mean the unit circle in the plane, centered at the origin.

Next we define the \emph{normalized counterclockwise distance} $\ds \colon S^1 \times S^1 \r [0,1)$ by writing $\ds(x,y)$ to denote $\theta/2\pi$, where $\theta$ is the counterclockwise angular distance from $x$ to $y$ (i.e.\ the geodesic distance on $S^1$). Then we identify $S^1$ with the interval $[0,1)$. More specifically, we fix a basepoint $x_0 \in S^1$ and identify $x \in S^1$ with $\ds(x_0,x)\in[0,1)$.

We use a ternary relation to describe the ordering of three points on $S^1$, writing $x_1 \preceq x_2 \preceq x_3\preceq x_1$ when $x_1,x_2,x_3$ appear on $S^1$ in this counterclockwise order, allowing equality. We similarly write $x_1\preceq x_2\preceq \ldots\preceq x_s\preceq x_1$ to denote that $x_1,\ldots,x_s$ appear in $S^1$ in this counterclockwise order. We may replace $\ldots\preceq x_i\preceq x_{i+1}\preceq\ldots$ with $\ldots\prec x_i\prec x_{i+1}\prec\ldots$ when in addition we have $x_i\neq x_{i+1}$.

For $x,y\in S^1$ and for $x\neq y$ we denote the closed counterclockwise arc from $x$ to $y$ by $[x,y]_{S^1}=\{z\in S^1 ~|~x \preceq z \preceq y \preceq x\}$. Open and half-open arcs are defined similarly and denoted $(x,y)_{S^1}$, $(x,y]_{S^1}$, or $[x,y)_{S^1}$.

\subsection*{Notation for regular polygons}

For integer $n \ge 3$, we write $P_n$ to denote the boundary of the regular polygon of $n$ sides, inscribed in $S^1$. We equip $P_n$ with the Euclidean metric of the plane.

Let $\dpn(x,y)$ be equal to the fractional number of edges of $P_n$ contained in $[x,y]_{P_n}$ multiplied by $1/n$. In this sense, $\dpn(x,y)$ denotes the normalized counterclockwise geodesic distance between two points $x,y \in P_n$. After fixing some arbitrary basepoint $x_0\in P_n$, this allows us to identify $P_n$ with $[0,1)$ by identifying $x\in P_n$ with $\dpn(x_0,x)$. Since we have identified both $S^1$ and $P_n$ with $[0,1)$, this gives us a fixed homeomorphism between $S^1$ and $P_n$.

As in the $S^1$ case, we use a ternary relation to describe the ordering of three points on $P_n$, writing $x_1 \preceq x_2 \preceq x_3\preceq x_1$ when $x_1,x_2,x_3$ appear on $P_n$ in this counterclockwise order, allowing equality. We also use $\prec$ notation to disallow equality as in the $S^1$ case.

For $x,y\in P_n$ and for $x\neq y$ we denote the closed, open, or half-open counterclockwise arcs from $x$ to $y$ by $[x,y]_{P_n}$, $(x,y)_{P_n}$, or $(x,y]_{P_n}$, respectively.

\subsection*{Graphs and directed graphs}
A directed graph is a pair $G = (V, E)$ with $V$ the set of vertices and $E \subseteq V \times V$ the set of directed edges, where we require that there are no loops and that no edges are oriented in both directions. We also denote the set of vertices by $V(G)$, and the directed edge $(u, w)$ will also be denoted $u \to w$. A homomorphism of directed graphs
$h\colon G \to \tilde{G}$ is a vertex map such that for every edge $u\to w$ in $G$, either $h(u) = h(w)$ or there is an edge
$h(u) \to h(w)$ in $\tilde{G}$. For a vertex $u\in V$ we define the out- and in-neighborhoods
\[N^+(G,u) =\{w~|~u\diredge w\}, \quad N^-(G,u)=\{w~|~w\diredge u\}, \]
as well as their respective closed versions
\[ N^+[G,u]=N^+(G,u)\cup{u}, \quad N^-[G,u]=N^-(G,u)\cup{u}. \]
An undirected graph is a graph in which the orientations on the edges are omitted. Given a graph $G= (V,E)$ (directed or undirected) and a subset $S \subseteq V$, we write $G[S]$ to denote the induced subgraph with vertex set $S$.

\subsection*{Topological spaces} See~\cite{armstrong2013basic,Hatcher} for background on topological spaces. If $X$ and $Z$ are homotopy equivalent topological spaces, then we write $X\simeq Z$. The $i$-fold wedge sum of a space $X$ with itself is denoted $\bigvee^i X$. 
For $X$ a topological space and $G$ an abelian group, we let $H_i(X;G)$ denote the $i$-dimensional homology of $X$ computed with coefficients in the abelian group $G$. Roughly speaking, $H_i(X;G)$ measures the number of ``$i$-dimensional holes" in $X$.

\subsection*{Simplicial complexes}
 A \emph{geometric $k$-simplex} is the convex hull of $k + 1$ affinely independent points in Euclidean space. A  \emph{simplicial complex} is a collection of geometric simplices such that
 \begin{itemize}
 \item Every face of a simplex in $K$ also belongs to $K$.
 \item For any two simplices $\sigma_1$ and $\sigma_2$ in $K$, if $\sigma_1 \cap \sigma_2 \neq \emptyset$, then $\sigma_1 \cap \sigma_2$ is
a common face of both $\sigma_1$ and $\sigma_2$.
 \end{itemize}
 
An abstract simplicial complex is a pair $(V, \Sigma)$
with $V$ a finite set of vertices, and where $\Sigma$ is a subset (called the simplices) of the collection of all non-empty subsets of $V$, satisfying the condition that if $\sigma \in \Sigma$ and $\emptyset\neq \tau \subseteq \sigma$, then $\tau \in \Sigma$. An abstract simplicial complex can always be associated to a Euclidean simplicial complex, which we call a \emph{geometric realization} of the complex \cite{munkres-book}. We do not distinguish between an abstract simplicial complex and its geometric realization.

For $G$ an undirected graph, the clique complex $\cl(G)$ is the simplicial complex with vertex set $V(G)$ and with faces determined by all cliques (complete subgraphs) of $G$.

\subsection*{Vietoris--Rips simplicial complexes}
For $X$ a metric space and $r>0$ a scale parameter, the \emph{Vietoris--Rips} complex $\vrcless{X}{r}$ (resp.\ $\vrcleq{X}{r}$) is the simplicial complex with vertex set $X$, where a finite subset $\sigma\subseteq X$ is a face if and only if the diameter of $\sigma$ is less than $r$ (resp.\ at most $r$)~\cite{Hausmann1995,Vietoris27}. Note that a Vietoris--Rips simplicial complex is the clique complex of its 1-skeleton.

\begin{figure}[h]
\centering
\includegraphics[width=0.35\textwidth]{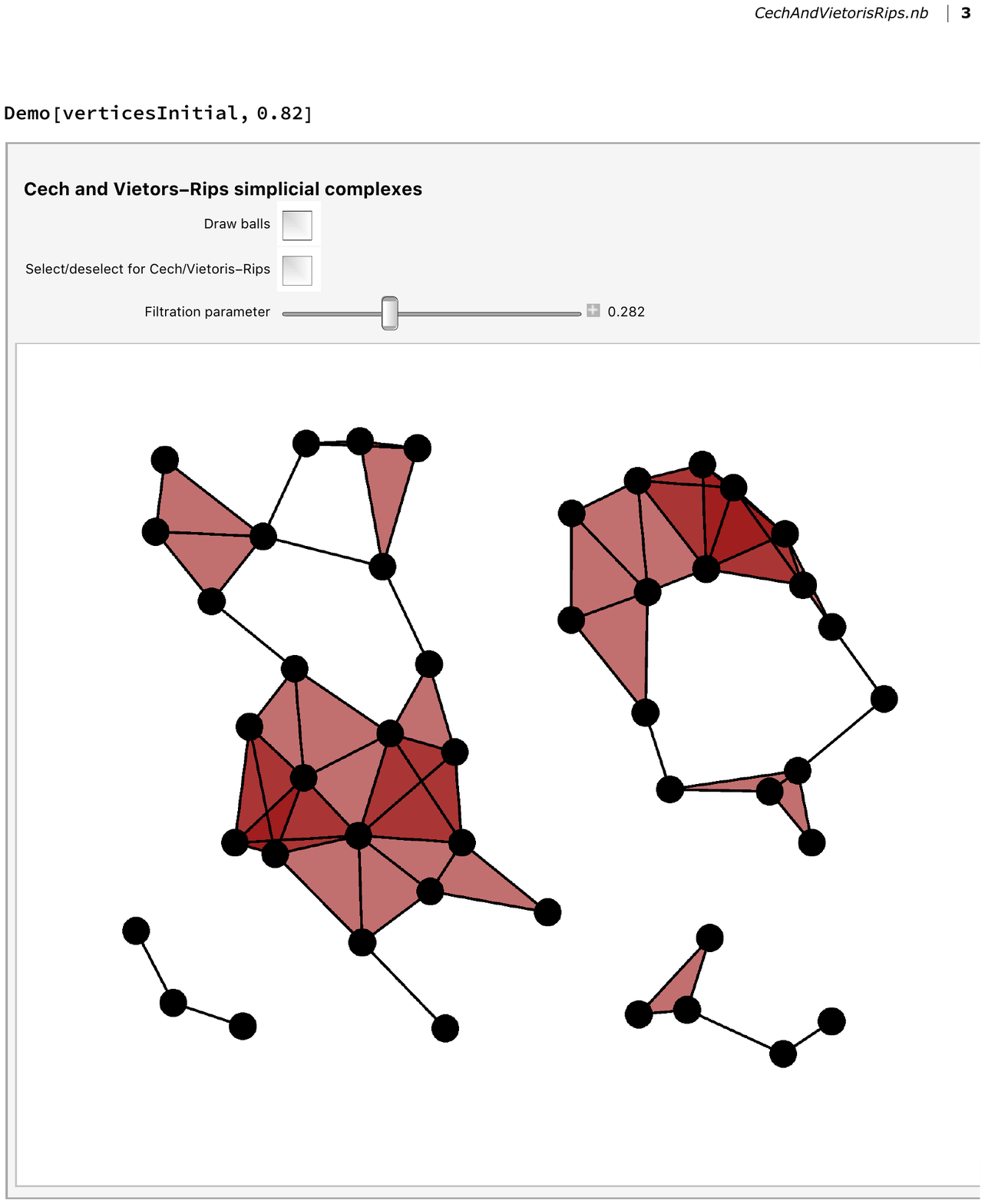}
\caption{A Vietoris--Rips complex of a finite set of points in the plane.}
\label{fig:Rips}
\end{figure}

\subsection*{Persistent homology}

Given an increasing sequence (i.e.\ a filtration) of topological spaces
\[ X_0 \hookrightarrow X_1 \hookrightarrow \ldots \hookrightarrow X_{k-1} \hookrightarrow X_k, \]
one can apply the $i$-dimensional homology functor with coefficients in a field $\F$ to obtain a sequence of vector spaces and linear maps
\[ H_i(X_0;\F) \to H_i(X_1;\F) \to \ldots \to H_i(X_{k-1};\F) \to H_i(X_k;\F).\]
The persistent homology of this sequence will be a multiset of intervals in which the start and end of each interval parametrizes, roughly speaking, the birth and death time of a topological feature in this filtered topological space~\cite{EdelsbrunnerHarer}.

For example, let $X$ be a metric space, and consider the Vietoris--Rips complexes $\vrc{X}{r}$ over a finite set of $r$-values $r_0 < r_1 < \ldots < r_{k-1} < r_k$. We obtain an increasing sequence of topological spaces 
\[ \vrc{X}{r_0} \hookrightarrow \vrc{X}{r_1} \hookrightarrow \ldots \hookrightarrow \vrc{X}{r_{k-1}} \hookrightarrow \vrc{X}{r_k}, \]
whose persistent homology describes how the shape of $\vrc{X}{r}$ changes as $r$ increases. If $X$ is a sample from some unknown underling metric space $M$, then the persistent homological features in $\vrc{X}{r}$ are often taken as an estimate of the homology of $M$, whereas the remaining short-lived features are often regarded as noise~\cite{Carlsson2009,ChazalOudot2008}.

Instead of selecting a finite number of scale parameters $r_0 < r_1 < \ldots < r_{k-1} < r_k$, one can instead vary $r$ over the entire interval $(0,\infty)$. Though it is more subtle, the  theory of persistent homology over the reals also exists (see for example~\cite{chazal2016structure}). The persistent homology diagrams of $\vrc{P_n}{r}$ which we describe, for example in Corollary~\ref{cor:main}, are over all real scale parameters $r\in(0,\infty)$.

\section{Cyclic graphs}\label{sec:cyclic}

In this section we review the basic theory of cyclic graphs. Cyclic graphs are of central importance to our results---we have a clear understanding of the homotopy type of simplicial complex $\vrc{P_n}{r}$ whenever $\vr{P_n}{r}$ is a cyclic graph. Specifically, the Vietoris-Rips complex will be homotopy equivalent to either an odd sphere or a wedge sum of even spheres of the same dimension. This theory in this section was developed and used in~\cite{Adamaszek2013,AA-VRS1,AAFPP-J,AAM,AAR,Reddy}.
 
\begin{definition}
\label{defn:cyclic-graph}
Let $G=(V,E)$ be a directed graph, where $V$ is a equipped with a fixed injective map into $S^1$. Therefore $V$ inherits a counterclockwise ordering from $S^1$, along with the subspace topology. Then $G$ is said to be \emph{cyclic} if whenever there is a directed edge $u_0 \diredge u_1$, there are also edges $u_0 \diredge w \diredge u_1$ for all $u_0 \prec w \prec u_1 \prec u_0$. 
\end{definition}

The Vietoris--Rips graph of a circle, of an ellipse of small eccentricity, or of a regular regular polygon $P_n$ (so long as the scale is sufficiently small) are examples of cyclic graphs with an infinite number of vertices. However, we will introduce the theory in the (simpler) finite case first.

\subsection{Finite cyclic graphs}

We call a cyclic graph $G$ \emph{finite} if its underlying vertex set $V$ is finite, and \emph{infinite} otherwise. An important family of finite cyclic graphs are the regular cyclic graphs, defined as follows.

\begin{definition}
For integers $n$ and $k$ with $0 \leq k < \frac{1}{2} n$, the \emph{regular} cyclic graph $\cnk{n}{k}$ has vertex set $\{0,\ldots,n-1\}$ and edges $i \diredge (i+s) \mod n$ for all $i = 0, \ldots, n-1$ and $s = 1, \ldots, k$.
\end{definition}

Every finite cyclic graph naturally gives rise to a dynamical system.

\begin{definition} \label{defn:cyc-dyn-fin}
Let $G$ be a finite cyclic graph with vertex set $V$. The associated \emph{finite cyclic dynamical system} is generated by the map $f: V \to V$ given by writing 
\[f(u):= \argmax \{\ds(u,w)~|~w \in N^+[G,u]\}.\]
Note that $f$ is well-defined (i.e.\ $f(u)$ is a singleton in $V$) by the finiteness of $V$. We often call $f(u)$ the \emph{counterclockwise-most} vertex of $N^+[G,u]$.
\end{definition}

A vertex $u \in V$ is \emph{periodic} if $f^i(u) = u$ for some $i \ge 1$. If $u$ is periodic then we refer to $\{f^i(u) \ | \ i \ge 0 \}$ as a \emph{periodic orbit}, whose length $\olen$ is the smallest integer $i \ge 1$ such that $f^i(u) = u$. The \emph{winding number} of a periodic orbit is $\sum_{i = 0}^{\olen-1}\ds(f^i(u),f^{i+1}(u))$, the number of times that this periodic orbit ``winds around" the graph.

The next lemma appeared as~\cite[Lemma 2.3]{AAM} without proof; we provide a proof here for completeness.

{
\makeatletter
\tagsleft@false 

\begin{lemma}
\label{lem:fin-cyc-wf-const}
Let $G = (V,E)$ be a finite cyclic graph. Then all the periodic orbits of $G$ have the same length $\olen$ and the same winding number $\omega$.
\end{lemma}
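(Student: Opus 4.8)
The plan is to encode the dynamics $f$ as a monotone degree-one circle map by lifting to the universal cover $\pi\colon\R\to S^1=\R/\Z$, and then to deduce the statement from two facts: that any two periodic orbits have the same ratio $\omega/\ell$, and that on a single orbit this ratio is automatically in lowest terms.

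First I would lift. Set $\tilde V=\pi^{-1}(V)$, a $\Z$-periodic subset of $\R$, and define $F\colon\tilde V\to\tilde V$ by letting $F(\tilde u)$ be the unique lift of $f(u)$ lying in $[\tilde u,\tilde u+1)$; this is legitimate since $\ds(u,f(u))\in[0,1)$, and by construction $F(\tilde u+1)=F(\tilde u)+1$ and $F(\tilde u)\ge\tilde u$. The crucial step is to show that $F$ is non-decreasing. For $\tilde u\le\tilde v\le\tilde u+1$ I would split on whether $v$ lies in the arc $[u,f(u)]_{S^1}$: if it does and $v\notin\{u,f(u)\}$, then the cyclic axiom applied to the edge $u\diredge f(u)$ produces the edge $v\diredge f(u)$, so $f(u)\in N^+[G,v]$ and hence $f(v)$ is at least as far counterclockwise from $v$ as $f(u)$ is, giving $F(\tilde u)\le F(\tilde v)$; if instead $v$ lies past $f(u)$, then $F(\tilde v)\ge\tilde v>F(\tilde u)$ for free. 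Periodicity and $\Z$-equivariance then upgrade this to monotonicity of $F$ on all of $\tilde V$. This translation of the cyclic axiom into monotonicity is the main obstacle and the only place the hypothesis is really used; the remaining steps are bookkeeping about monotone $\Z$-equivariant maps.

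Second, I would record that a periodic orbit of length $\ell$ and winding number $\omega$ lifts to the relation $F^\ell(\tilde u)=\tilde u+\omega$, since the winding number telescopes into $\sum_i\bigl(F(\widetilde{f^i u})-\widetilde{f^i u}\bigr)$. Given two orbits with data $(\ell_1,\omega_1)$ and $(\ell_2,\omega_2)$, I would choose lifts with $\tilde u_1\le\tilde u_2<\tilde u_1+1$ and iterate $N=k\ell_1\ell_2$ times. Monotonicity together with $F(x+1)=F(x)+1$ bounds $F^N(\tilde u_2)-F^N(\tilde u_1)$ inside $[0,1)$ for every $k$, whereas the left-hand side equals $(\tilde u_2-\tilde u_1)+k(\omega_2\ell_1-\omega_1\ell_2)$. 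Letting $k\to\infty$ forces the integer $\omega_2\ell_1-\omega_1\ell_2$ to vanish, i.e.\ $\omega_1/\ell_1=\omega_2/\ell_2$.

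Finally, to pin down $\ell$ and $\omega$ individually I would show each orbit's ratio is already reduced. Restricted to the $\Z$-periodic set $\pi^{-1}(\text{orbit})$, the map $F$ is a non-decreasing, $\Z$-equivariant bijection, so on the circularly sorted orbit points $q_0\prec q_1\prec\cdots\prec q_{\ell-1}$ it acts as a cyclic shift $q_t\mapsto q_{(t+\omega)\bmod\ell}$, where tracking the total displacement identifies the shift amount with $\omega$. Because the orbit is a single $f$-cycle of length $\ell$, this shift must itself be an $\ell$-cycle, which forces $\gcd(\omega,\ell)=1$. Combined with the previous paragraph, the common value $\omega/\ell$ then has a unique lowest-terms representation, so all orbits share the same $\ell$ and the same $\omega$. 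The degenerate case of a fixed point, where $\ell=1$ and $\omega=0$, is consistent with this and is subsumed.
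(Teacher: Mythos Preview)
Your proof is correct and takes a genuinely different route from the paper's. The paper works directly on the circle: it first isolates an interleaving property (if $u\prec w\preceq f(u)$ then $f(u)\preceq f(w)\preceq f^2(u)$), chooses a point $w$ of a second orbit sandwiched between $u$ and $f(u)$, iterates, and then uses finiteness of $V$ in a pigeonhole fashion to rule out $f^\ell(w)$ drifting strictly ahead of or behind $w$; equality of winding numbers falls out because the two orbits are now interleaved step for step. You instead pass to the universal cover, recast the cyclic axiom as monotonicity of the lift $F$, and read off a common rotation number $\omega/\ell$ from the bounded-growth estimate on $F^{k\ell_1\ell_2}$; your additional observation that $f$ restricted to a single orbit is a cyclic shift by $\omega\bmod\ell$ then forces $\gcd(\omega,\ell)=1$ and pins down $\omega$ and $\ell$ individually. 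Your approach situates the lemma inside standard circle dynamics and would adapt more readily to related settings (e.g.\ the infinite cyclic graphs appearing later in the paper), while the paper's argument is more elementary and entirely self-contained, needing no lifts or limits. One cosmetic point: with only non-strict monotonicity of $F$ you get $F^N(\tilde u_2)-F^N(\tilde u_1)\in[0,1]$ rather than $[0,1)$, but boundedness as $k\to\infty$ is all that is needed, so this does not affect the argument.
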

\begin{proof}[Proof of Lemma~\ref{lem:fin-cyc-wf-const}]
First note that for any $u,w \in V$,
\begin{equation}
u\prec w \preceq f(u) \prec u \quad\mbox{implies}\quad u\prec w\preceq f(u)\preceq f(w)\preceq f^2(u) \prec f(u). \tag{monotonicity}
\end{equation}
To see this, let $u\prec w \preceq f(u) \prec u$ in $V$. 
Towards a contradiction, suppose $f(w)\prec f(u)\preceq f^2(u) \preceq f(w)$. But then we have $u \prec w \preceq f(w) \prec f(u) \preceq u$. Because $G$ is cyclic, the presence of the edge $u \r f(u)$ would imply the presence of the edge $w \r f(u)$. This contradicts the strict relation $f(w)\neq f(u)$ in the ordering $f(w)\prec f(u)\preceq f^2(u) \preceq f(w)$.

Next let $A= \{u,f(u),f^2(u),\ldots, f^{\olen}(u)\}$ be a periodic orbit of length $\olen$. Let $B$ be a distinct periodic orbit. We claim that by relabeling terms if necessary, we can pick a vertex $w\in B$ such that $u \prec w \prec f(u) \preceq u$. To see this, suppose towards a contradiction that no such $w \in B$ exists. Then there must be some $w \in B$ such that $w \prec u \preceq f(u) \prec f(w) \prec w$, where the strict relation $w\neq u$ holds because $B$ is distinct from $A$. But because $G$ is cyclic, this contradicts monotonicity above.

Let $w\in B$ be such that $u \prec w \prec f(u) \preceq u$. By the monotonicity observation, we have $f(u) \prec f(w) \prec f^2(u) \preceq f(u)$. Here the strict relations hold because $B$ and $A$ are distinct orbits.
By induction, we get that $u = f^\olen(u) \prec f^\olen(w) \prec f^{\olen+1}(u) = f(u) \preceq u$.

Now there are three cases: either $f^\olen(w) = w$, or $u \prec f^\olen(w) \prec w \prec f(u) \preceq u$, or $u \prec w \prec f^\olen(w) \prec f(u) \preceq u$. We show that the last two cases cannot occur.

Towards a contradiction, suppose $u \prec f^\olen(w) \prec w \prec f(u) \preceq u$. Then by repeated application of $f^\ell$, we get the following sequence of inequalities: $u\preceq \ldots \preceq f^{3\olen}(w) \preceq f^{2\olen}(w) \preceq f^\olen(w) \preceq w \prec u$. Because $V$ is finite and the sequence is ``bounded below" by $u$ (which is not in the orbit of $w$), there must exist some nonnegative integer $k$ such that $f^{k\olen}(w) = f^{(k+1)\olen}(w)$. But then $\{f^{k\olen}(w),f^{k\olen+1}(w),\ldots, f^{k\olen+\olen}(w)\}$ forms a periodic orbit that never returns to $w$, contradicting the periodicity of $w$. A similar argument shows that $u \prec w \prec f^\olen(w) \prec f(u) \preceq u$ also cannot occur, and so we must have $f^\olen(w) = w$. Thus the orbit of $w$ also has length $\olen$. 

The preceding work shows that the orbits of $u$ and $w$ are interleaved and have the same length. It follows that they have the same winding number.
\end{proof}

} 

By virtue of Lemma~\ref{lem:fin-cyc-wf-const}, we can define the \emph{winding fraction} of a finite cyclic graph $G$ as follows:

\begin{definition}
If $G$ is a finite cyclic graph in which periodic orbits have length $\olen$ and winding number $\omega$, then we define the \emph{winding fraction} of $G$ to be $\wf(G) = \frac{\omega}{\olen}$.
\end{definition}
Intuitively, the winding fraction measures the fractional amount a periodic vertex ``wraps around the cyclic graph" in a single step of the cyclic dynamical system.

We denote by $P$ the number of periodic orbits in a given finite cyclic graph. A finite cyclic graph has at least one periodic orbit since $V$ is finite. The following proposition is from~\cite[Propositions~4.2]{AAR}.

\begin{proposition}\label{prop:vrchtpy}
If $G$ is a finite cyclic graph, then
\[ \cl(G)\simeq\begin{cases}
S^{2l+1} & \mbox{if }\frac{l}{2l+1}<\wf(G)<\frac{l+1}{2l+3}\mbox{ for some }l\in\N,\\
\bigvee^{\numorb-1}S^{2l} & \mbox{if }\wf(G)=\frac{l}{2l+1}.
\end{cases} \]
\end{proposition}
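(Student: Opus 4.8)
The plan is to prove Proposition~\ref{prop:vrchtpy} by induction on $\olen$, the common length of the periodic orbits, using the dynamical system $f$ and the monotonicity property established in the proof of Lemma~\ref{lem:fin-cyc-wf-const}. The base case is $\olen=1$, where every periodic orbit is a fixed point of $f$. Intuitively, when $\wf(G)$ is small enough, the counterclockwise-most out-neighbor of each vertex does not wrap around far, and one expects $\cl(G)$ to be homotopy equivalent to a low-dimensional sphere. I would first handle the extreme low-$\wf$ regime directly: when $\wf(G)<\frac{1}{3}$, I expect $\cl(G)\simeq S^1$, and when $\wf(G)=0$ (so $\omega=0$), the periodic orbits are single fixed points and a standard nerve/folding argument should collapse $\cl(G)$ onto a wedge of $0$-spheres indexed by the $\numorb$ periodic orbits, giving $\bigvee^{\numorb-1}S^0$.

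The main engine of the argument will be a \emph{folding} or \emph{deformation-retraction} step that reduces a cyclic graph with winding fraction in a higher band to one with a strictly smaller winding fraction, while controlling the change in homotopy type by a suspension. Concretely, I would look for a vertex $v$ (for instance one that is not periodic, or a vertex chosen according to where $f$ moves least) whose closed neighborhood is dominated by that of an adjacent vertex, so that $N^+[G,v]$ or $N^-[G,v]$ is contained in a neighbor's neighborhood; such domination lets one remove $v$ without changing the homotopy type of the clique complex, via the standard fact that a vertex whose link is a cone can be deleted up to homotopy equivalence. Iterating these removals, one passes from $\cnk{n}{k}$-like graphs down through the hierarchy. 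The key quantitative input is that the winding fraction bands $\bigl(\frac{l}{2l+1},\frac{l+1}{2l+3}\bigr)$ are exactly the intervals appearing in the known computation of $\cl(\cnk{n}{k})$, so I would aim to reduce the general finite cyclic graph to a regular cyclic graph $\cnk{n}{k}$ with the same winding fraction, and then invoke the established homotopy classification of $\cl(\cnk{n}{k})\simeq S^{2l+1}$ or $\bigvee^{\numorb-1}S^{2l}$ from~\cite{Adamaszek2013,AA-VRS1}.

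The reduction to the regular case is where the monotonicity property does the real work: since all periodic orbits share the length $\olen$ and winding number $\omega$, the periodic orbits are interleaved and evenly wrap the circle, so that contracting each vertex onto the nearest periodic vertex (a homomorphism of cyclic graphs onto the subgraph induced by the periodic vertices) should be a homotopy equivalence on clique complexes. I would verify that this induced subgraph on the $\olen\numorb$ periodic points is again cyclic and in fact isomorphic to $\cnk{\olen\numorb}{\omega}$ up to the action permuting the $\numorb$ orbits, at which point the boundary-case formula $\wf=\frac{l}{2l+1}$ produces exactly the $\numorb$-fold wedge of even spheres (one $2l$-sphere per orbit, wedged, hence $\numorb-1$ independent copies after accounting for connectivity).

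The hard part will be rigorously justifying that the collapse of non-periodic vertices onto the periodic ones is a homotopy equivalence, rather than merely a homomorphism: domination of neighborhoods must be checked for the \emph{counterclockwise-most} structure, and one must ensure no homotopy is lost when several non-periodic vertices feed into the same periodic orbit. Controlling the passage across a band boundary $\wf(G)=\frac{l}{2l+1}$, where the homotopy type jumps from an odd sphere $S^{2l-1}$ through a wedge of even spheres $\bigvee^{\numorb-1}S^{2l}$ up to $S^{2l+1}$, is the delicate point, since this is exactly where the suspension dimension increases; I would rely on the detailed analysis of $\cnk{n}{k}$ at these transition values together with the nerve lemma applied to the cover by closed out-neighborhoods $\{N^+[G,u]\}$, whose nonempty intersections are themselves cliques, to pin down the homotopy type on the nose.
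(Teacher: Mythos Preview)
The paper does not actually prove this proposition: it is quoted verbatim as \cite[Proposition~4.2]{AAR}, so there is no argument in the paper to compare against. What you have written is therefore an attempt to reprove a result from the cited literature rather than to reconstruct a proof the authors gave.

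Your sketch contains the correct core idea used in \cite{Adamaszek2013,AAFPP-J,AAR}: one removes non-periodic vertices one at a time by observing that each such vertex is \emph{dominated} (its closed neighborhood is contained in that of $f(v)$ or of an adjacent periodic vertex), and deleting a dominated vertex does not change the homotopy type of the clique complex. Iterating this dismantling collapses $\cl(G)$ onto the clique complex of the subgraph induced on the periodic points, and one then analyzes that subgraph directly. So the dismantling/domination step you single out as ``the hard part'' is indeed the main step, and you have identified it correctly.

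Two parts of your plan, however, are off. First, there is no suspension in the argument: the dimension of the spheres does not arise from an inductive suspension that bumps the winding fraction across bands, but rather comes out directly from the combinatorics of the clique complex of the all-periodic subgraph (via cross-polytopal generators when $\wf=\tfrac{l}{2l+1}$, or via a contractible cover/nerve argument in the generic band). Your proposed ``reduce the winding fraction and suspend'' mechanism is not what is actually happening and would be hard to make precise. Second, your claim that the induced subgraph on the $\olen\numorb$ periodic points is isomorphic to $\cnk{\olen\numorb}{\omega}$ is not correct in general: the periodic points need not be evenly spaced, and the out-degree in the induced subgraph need not be constant, so you do not land on a regular cyclic graph. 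What one can say is that the induced subgraph is again a finite cyclic graph in which \emph{every} vertex is periodic with the same $(\omega,\olen)$; the homotopy type of $\cl$ for such graphs is then determined by a separate (and nontrivial) argument, not by simply quoting the $\cnk{n}{k}$ case.
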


This proposition, for example, allows us to determine the homotopy types of the clique complexes of the cyclic graphs in Figure~\ref{fig:cyclic}.

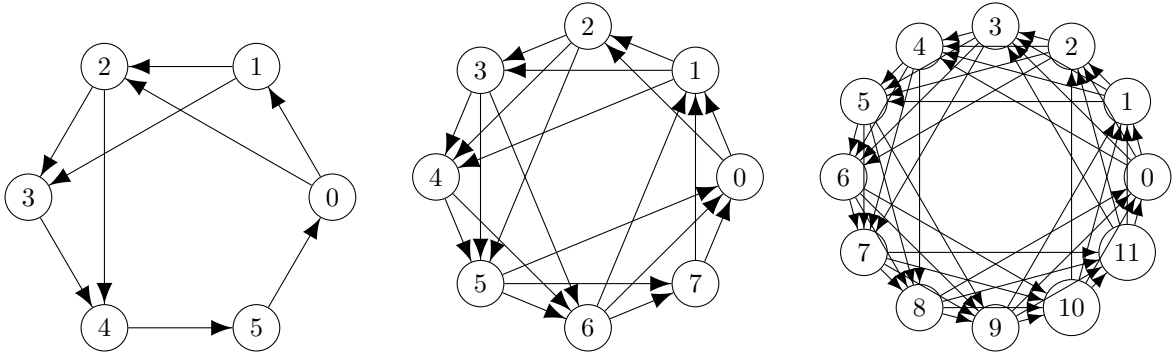
\begin{figure}[h]
\begin{center}
\begin{tikzpicture}[
		decoration={
			markings,
			mark=at position 1 with {\arrow[scale=2,black]{latex}};
		},every node/.style={draw,circle}
		]
		\def \n {5}
		\def \radius {3cm}
		\def \margin {8} 
		\foreach \s in {0,...,5}
		{\draw (60*\s: 2cm) node(\s){\s};}
		\draw [postaction=decorate] (0) -- (1);
		\draw [postaction=decorate] (0) -- (2);
		\draw [postaction=decorate] (1) -- (2);
		\draw [postaction=decorate] (1) -- (3);
		\draw [postaction=decorate] (2) -- (3);
		\draw [postaction=decorate] (2) -- (4);
		\draw [postaction=decorate] (3) -- (4);
		\draw [postaction=decorate] (4) -- (5);
		\draw [postaction=decorate] (5) -- (0);
		\end{tikzpicture}
		\hspace{5mm}
		\begin{tikzpicture}[
		decoration={
			markings,
			mark=at position 1 with {\arrow[scale=2,black]{latex}};
		},every node/.style={draw,circle}
		]
		\def \n {5}
		\def \radius {3cm}
		\def \margin {8} 
		\foreach \s in {0,...,7}
		{\draw (45*\s: 2cm) node(\s){\s};}
		\draw [postaction=decorate] (0) -- (1);
		\draw [postaction=decorate] (0) -- (2);
		\draw [postaction=decorate] (1) -- (2);
		\draw [postaction=decorate] (1) -- (3);
		\draw [postaction=decorate] (1) -- (4);
		\draw [postaction=decorate] (2) -- (3);
		\draw [postaction=decorate] (2) -- (4);
		\draw [postaction=decorate] (2) -- (5);
		\draw [postaction=decorate] (3) -- (4);
		\draw [postaction=decorate] (3) -- (5);
		\draw [postaction=decorate] (3) -- (6);
		\draw [postaction=decorate] (4) -- (5);
		\draw [postaction=decorate] (4) -- (6);
		\draw [postaction=decorate] (5) -- (6);
		\draw [postaction=decorate] (5) -- (7);
		\draw [postaction=decorate] (5) -- (0);
		\draw [postaction=decorate] (6) -- (7);
		\draw [postaction=decorate] (6) -- (0);
		\draw [postaction=decorate] (6) -- (1);
		\draw [postaction=decorate] (7) -- (0);
		\draw [postaction=decorate] (7) -- (1);
		\end{tikzpicture}
		\hspace{5mm}
		\begin{tikzpicture}[
		decoration={
			markings,
			mark=at position 1 with {\arrow[scale=1.5,black]{latex}};
		},every node/.style={draw,circle}
		]
		\def \n {5}
		\def \radius {3cm}
		\def \margin {8} 
		
		\foreach \s in {0,...,11}
		{\draw (30*\s: 2cm) node(\s){\s};}
		\draw [postaction=decorate] (0) -- (1);
		\draw [postaction=decorate] (1) -- (2);
		\draw [postaction=decorate] (2) -- (3);
		\draw [postaction=decorate] (3) -- (4);
		\draw [postaction=decorate] (4) -- (5);
		\draw [postaction=decorate] (5) -- (6);
		\draw [postaction=decorate] (6) -- (7);
		\draw [postaction=decorate] (7) -- (8);
		\draw [postaction=decorate] (8) -- (9);
		\draw [postaction=decorate] (9) -- (10);
		\draw [postaction=decorate] (10) -- (11);
		\draw [postaction=decorate] (11) -- (0);
		\draw [postaction=decorate] (0) -- (2);
		\draw [postaction=decorate] (1) -- (3);
		\draw [postaction=decorate] (2) -- (4);
		\draw [postaction=decorate] (3) -- (5);
		\draw [postaction=decorate] (4) -- (6);
		\draw [postaction=decorate] (5) -- (7);
		\draw [postaction=decorate] (6) -- (8);
		\draw [postaction=decorate] (7) -- (9);
		\draw [postaction=decorate] (8) -- (10);
		\draw [postaction=decorate] (9) -- (11);
		\draw [postaction=decorate] (10) -- (0);
		\draw [postaction=decorate] (11) -- (1);
		\draw [postaction=decorate] (0) -- (3);
		\draw [postaction=decorate] (1) -- (4);
		\draw [postaction=decorate] (2) -- (5);
		\draw [postaction=decorate] (3) -- (6);
		\draw [postaction=decorate] (4) -- (7);
		\draw [postaction=decorate] (5) -- (8);
		\draw [postaction=decorate] (6) -- (9);
		\draw [postaction=decorate] (7) -- (10);
		\draw [postaction=decorate] (8) -- (11);
		\draw [postaction=decorate] (9) -- (0);
		\draw [postaction=decorate] (10) -- (1);
		\draw [postaction=decorate] (11) -- (2);
		\draw [postaction=decorate] (0) -- (4);
		\draw [postaction=decorate] (1) -- (5);
		\draw [postaction=decorate] (2) -- (6);
		\draw [postaction=decorate] (3) -- (7);
		\draw [postaction=decorate] (4) -- (8);
		\draw [postaction=decorate] (5) -- (9);
		\draw [postaction=decorate] (6) -- (10);
		\draw [postaction=decorate] (7) -- (11);
		\draw [postaction=decorate] (8) -- (0);
		\draw [postaction=decorate] (9) -- (1);
		\draw [postaction=decorate] (10) -- (2);
		\draw [postaction=decorate] (11) -- (3);
\end{tikzpicture}
\end{center}
\caption{(Left) A cyclic graph of winding fraction $\frac{1}{4}$ with $\cl(G)\simeq S^1$. Node 1 is a slow point, and Node 2 is periodic. (Middle) A cyclic graph of winding fraction $\frac{1}{3}$ with $P=2$ periodic orbits and $\cl(G)\simeq S^2$. (Right) The cyclic graph $\cnk{12}{4}$ has winding fraction $\frac{1}{3}$ and $P=4$ periodic orbits. The homotopy type of its clique complex is $\cl(\cnk{12}{4})\simeq \bigvee^3 S^2$.}
\label{fig:cyclic}
\end{figure}

\subsection{Infinite cyclic graphs}

The theory of infinite cyclic graphs is more nuanced than the theory for finite cyclic graphs. In this section, we generalize the above notions to the infinite case, as this will be of central importance to our work. We begin by defining a notion of winding fraction for infinite cyclic graphs.

\begin{definition}
\label{defn:inf-cyc}
Let $G=(V,E)$ be an infinite cyclic graph. We define its winding fraction as
\[\wf(G):= \sup \{\wf(G[W])~|~W \mbox{ is a finite subset of } V\}. \]
We say that a cyclic graph $G$ is \emph{generic} if $\frac{l}{2l+1}<\wf(G)<\frac{l+1}{2l+1}$ for some integer $l\in\N$, or if $\wf(G)=\frac{l}{2l+1}$ and the supremum in Definition~\ref{defn:inf-cyc} is not attained. Otherwise we say $G$ is \emph{singular}.
\end{definition}
The homotopy properties of the clique complexes of generic cyclic graphs $G$ are simpler than those of singular cyclic graphs $G$; see Theorem~\ref{thm:homotopy-generic}.

In Definition~\ref{defn:cyc-dyn-fin}, we defined a finite dynamical system 
which does not always have an analogous definition in the infinite case. Nevertheless, we define an analogue of a dynamical system for infinite cyclic graphs as follows. 

\begin{definition}
\label{defn:cyc-dyn-inf}
Let $G$ be a cyclic graph with vertex set $V$, and let $m\ge 1$ be an integer. Recall $V$ is equipped with an embedding onto a subset of $S^1$; we identify $V$ forevermore with its image under this embedding.
We define the \emph{winding number map} $\gas_m \colon V\to\R$ by
\[ \gas_m(u_0) := \sup\Biggl\{\sum_{i=0}^{m-1}\ds(u_i,u_{i+1})~\Bigg|~\mbox{ there exists }u_0\diredge u_1\diredge\ldots\diredge u_m\Biggr\}. \]
The winding number map is naturally associated with a \emph{furthest point map} $\fs_m\colon V\to S^1$ that is defined by $\fs_m(u):=(u+\gas_m(u))\mod 1$. 
\end{definition}

\begin{remark}[Comparison of Definitions~\ref{defn:cyc-dyn-fin} and~\ref{defn:cyc-dyn-inf}]
In Definition~\ref{defn:cyc-dyn-fin}, we obtained a map $V \r V$ because the underlying cyclic graph was finite. In contrast, the furthest point map in Definition~\ref{defn:cyc-dyn-inf} is generally a map $V \r S^1$. Intuitively, the $\gas_m$ and $\fs_m$ maps serve as a proxy for the dynamical system in Definition~\ref{defn:cyc-dyn-fin}.
\end{remark}

\begin{definition}
We say $u\in V$ is \emph{periodic} if there is some $i\ge 1$ such that $\gas_i(u)\in\N$ and the supremum defining $\gas_i(u)$ is achieved. As in the finite case, the \emph{length} $\olen$ of this periodic orbit is the smallest such integer $i$, and its \emph{winding number} is $\gas_\olen(u)$.

Recall that $\ds$ is normalized to be in the interval $[0,1)$, so to have $\gas_i(u) \in \N$ means that $\fs_i(u) = u$. To say that the supremum defining $\gas_i(u)$ is achieved means that we have directed edges $u \r u_1 \r u_2 \r \ldots \r u_i = u$.
\end{definition}

In the setting of infinite cyclic graphs, the topology of the vertex set gives rise to subtleties; this motivates the next definition.

\begin{definition}
\label{defn:closed-cts}
A cyclic graph $G$ is \emph{closed} if its vertex set $V$ is closed as a subset of $S^1$ (under the specified embedding). We say $G$ is \emph{continuous} if $\gas_m\colon V\to \R$ is continuous for all $m\ge 1$.
\end{definition}

The significance of the preceding definition deserves some explanation. First note that if $G= (V,E)$ is closed, then because a closed set contains its limit points, we obtain a furthest point map $\fs: V \r V$ (as opposed to the codomain being $S^1$). If $\gas$ is continuous, then for any $i, j \in \N$ we obtain the nice composition property $\fs_{i+j}(u) = \fs_j(\fs_i(u))$~\cite[Lemma 5.13]{AAR}. This does not hold in general if $\gas$ is not continuous, as in the following example.

\begin{example}\label{ex:closed-cts}
Consider the cyclic graph $G=(V,E)$ with $V=S^1$ and edge set given by the following rules, where we are using the identification of $S^1$ with $[0,1)$.
\begin{enumerate}
\item $(u,(u+t) \!\mod 1) \in E$ for all $u \in S^1$ and all $0 < t < \frac{1}{10}$,
\item $(u,w) \in E$ for all $u \in [\frac{1}{10},\frac{2}{10}]$ and all $w \in [\frac{2}{10},\frac{3}{10}]$.
\end{enumerate}
Note $\gas$ is not continuous.
Then $\gas_1(0) = \frac{1}{10}$, and $\gas_2(0) = \frac{2}{10}$. Thus $\fs_1(0)= \frac{1}{10}$, and $\fs_2(0) = \frac{2}{10}$. However, we have
\[ \fs_1(\fs_1(0)) = \fs_1(\tfrac{1}{10}) = \tfrac{3}{10} \neq \tfrac{2}{10} = \fs_2(0). \]
\end{example}

\subsection{Fast and slow points}

Let $G$ be a (possibly infinite) cyclic graph with \emph{rational} winding fraction $\tfrac{p}{q}$. If a vertex $v$ of $G$ is not periodic, then it can behave in one of two ways. Furthermore, this behavior plays a role in the characterization that we obtain of the homotopy types of clique complexes. We now describe these non-periodic behaviors.

\begin{definition}[Fast and slow points]
\label{defn:fast-slow} 
Let $G = (V,E)$ be a (possibly infinite) cyclic graph with a rational winding fraction $\tfrac{p}{q}$. Let $V_\numorb$ be the set of periodic points. 
We say that a non-periodic vertex $u\in V\setminus V_\numorb$ is \emph{fast} if $\gas_q(u)>p$, and \emph{slow} if $\gas_q(u)\le p$. Note that in the fast case we have $u\prec \fs_q(u) \prec \fs_{q+1}(u)\prec u$, i.e.\ the dynamical system causes $\fs_q(u)$ to overshoot $u$. In the slow case, we have $u\prec \fs_{q+1}(u)\prec \fs_q(u) \prec u$.
\end{definition}

We provide an illustration of a slow point in Figure~\ref{fig:cyclic}. 

Even if a vertex $u\in V$ is not periodic, the dynamical system might eventually push $u$ onto a periodic point, after which it follows a periodic trajectory. We describe this situation next. 

\begin{definition}[Permanently fast points]
\label{defn:fast-slow-perm}
Let $G = (V,E)$ be a (possibly infinite) cyclic graph with winding fraction $\tfrac{p}{q}$ and set of periodic points $V_\numorb$. 
We say that a fast (resp.\ slow) vertex $u \in V \setminus V_p$ \emph{achieves periodicity} if there exists $m \in \N$ such that the supremum defining $\gas_m(u)$ is achieved, $\fs_m(u)$ belongs to $V$, and $\fs_m(u)$ is periodic. Otherwise, we say that $u$ is \emph{permanently fast} (resp.\ \emph{permanently slow}). Two permanently fast points $u$ and $w$ are \emph{equivalent} if $\lim_{m\to\infty}f_{mq}(u)=\lim_{m\to\infty}f_{mq}(w)$.
\end{definition}

\begin{definition}[Invariant sets]
Let $G$ be a closed and continuous cyclic graph with rational winding fraction $\wf(G) = \frac{p}{q}$. An \emph{invariant set $I = E_0 \cup \ldots \cup E_{q-1}$ of permanently fast points} is a union of $q$ equivalence classes $E_0, \ldots, E_{q-1}$ of permanently fast points such that for all $u_i \in E_i$ and $j \ge 0$, we have $f_j(u_i) \in E_{i+j \mod q}$.
\end{definition}

By the results in~\cite{AAR}, the set of permanently fast points is partitioned into invariant sets.

\begin{lemma}[Partitioning into invariant sets; Lemma 5.7 of~\cite{AAR}]
\label{thm:partition-perm-fast}
Let $G = (V,E)$ be a closed and continuous cyclic graph with a rational winding fraction $\tfrac{p}{q}$. Then its set of permanently fast points are partitioned into a collection of invariant sets of permanently fast points.
\end{lemma}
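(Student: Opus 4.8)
The plan is to produce the invariant sets as ``basins of attraction'': to each periodic orbit I associate the permanently fast points whose forward orbit under the return map $\fs_q$ converges to a point of that orbit. The whole argument hinges on first showing that such limits exist and are periodic, which simultaneously shows that the equivalence relation in Definition~\ref{defn:fast-slow-perm} is well defined.

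First I would establish convergence of the orbit of a permanently fast point $u$. Since $G$ is closed, the return map $\fs_q\colon V\to V$ is well defined, and since $\gas_q$ is continuous so is $\fs_q$; moreover every periodic point $v$ is a fixed point of $\fs_q$, because $\gas_q(v)=p\in\N$ with the supremum attained. The set $V_\numorb$ of periodic points is therefore closed, being the fixed-point set of a continuous map, so $u$ lies in one of the open arcs of $S^1\setminus V_\numorb$ (periodic points exist in this setting), and the counterclockwise endpoint $u^\#$ of that arc is periodic. Using that $\fs_q$ is monotone for the cyclic order, periodic points act as barriers: since $u^\#$ is fixed by $\fs_q$, monotonicity confines the orbit to the arc $[u,u^\#]_{S^1}$. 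Because $u$ is \emph{fast} we have the overshoot $u\prec\fs_q(u)$, which monotonicity propagates along the orbit; thus, after lifting to the universal cover, $\{\fs_{mq}(u)\}_{m\ge 0}=\{\fs_q^{\circ m}(u)\}$ is monotonically increasing and bounded above by a lift of $u^\#$. A bounded monotone sequence converges, and closedness of $V$ places the limit $u^\ast:=\lim_{m\to\infty}\fs_{mq}(u)$ in $V$; continuity of $\fs_q$ then gives $\fs_q(u^\ast)=\lim_m\fs_{(m+1)q}(u)=u^\ast$, so $u^\ast$ is periodic. Since the arc $(u,u^\#)_{S^1}$ contains no periodic point, in fact $u^\ast=u^\#$, the nearest periodic point counterclockwise of $u$.

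Next I would check that the dynamics respect this structure. If $\fs(u)$ were periodic or achieved periodicity, then $u$ would itself achieve periodicity, contradicting permanent fastness; together with the fact that $\fs$ preserves the fast/slow dichotomy on non-periodic points (a consequence of order-monotonicity), this shows $\fs(u)$ is again permanently fast. The composition property $\fs_{i+j}=\fs_j\circ\fs_i$, valid because $\gas$ is continuous~\cite[Lemma~5.13]{AAR}, gives $\fs_{mq}\circ\fs=\fs\circ\fs_{mq}$, so continuity of $\fs=\fs_1$ yields
\[
\lim_{m\to\infty}\fs_{mq}\bigl(\fs(u)\bigr)=\lim_{m\to\infty}\fs\bigl(\fs_{mq}(u)\bigr)=\fs\Bigl(\lim_{m\to\infty}\fs_{mq}(u)\Bigr)=\fs(u^\ast).
\]
Writing $p/q$ in lowest terms, a periodic orbit $O=\{p_0,\dots,p_{q-1}\}$ has exactly $q$ points with $p_j=\fs_j(p_0)$ and $\fs_q(p_0)=p_0$. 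Defining $E_j$ to be the set of permanently fast points with $\lim_m\fs_{mq}(u)=p_j$, the displayed identity shows each $E_j$ is exactly one equivalence class and that $u\in E_i$ implies $\fs_j(u)\in E_{(i+j)\bmod q}$; hence $I_O=E_0\cup\cdots\cup E_{q-1}$ is an invariant set. Finally, distinct orbits are disjoint and each permanently fast point has a unique limit lying on a unique orbit, so the sets $I_O$ are pairwise disjoint and cover all permanently fast points; therefore they partition them.

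I expect the main obstacle to be the convergence argument above, and specifically its two structural inputs: the monotonicity of the return map $\fs_q$ in the cyclic order, and the barrier behavior of periodic points that keeps a permanently fast orbit trapped in a single arc of $S^1\setminus V_\numorb$. Making this rigorous requires careful bookkeeping with the $\bmod 1$ arithmetic in the universal cover and a clean argument that the orbit cannot complete a full revolution---exactly the feature that forces the limit to be the adjacent periodic point and that distinguishes permanently fast points from those achieving periodicity.
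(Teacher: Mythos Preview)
The paper does not prove this lemma; it merely cites \cite[Lemma~5.7]{AAR}. So there is no proof here to compare against, and I can only assess your argument on its own terms. Your overall architecture---show that $\lim_{m\to\infty}\fs_{mq}(u)$ exists for every permanently fast $u$, group points by these limits, and verify that $\fs_j$ cyclically permutes the resulting equivalence classes---is the right one, and the final paragraph is essentially correct once the limits are known to exist.

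There are, however, two genuine gaps in your convergence step. First, you assert that periodic points exist and use them as barriers to bound the lifted orbit. This can fail: the graph $\vrless{P_n}{r}$ for $s_{n,l}<r\le t_{n,l}$ is closed and continuous with winding fraction $\tfrac{l}{2l+1}$, yet it has \emph{no} periodic orbits whatsoever (see the proof of Theorem~\ref{thm:main-general}). The correct barriers are the points $w$ with $\gas_q(w)\le p$; these exist because if $\gas_q>p$ held everywhere, compactness and continuity would give $\gas_q\ge p+\varepsilon$ uniformly, and then the composition identity $\gas_{mq}(u)=\sum_{k=0}^{m-1}\gas_q(\fs_{kq}(u))$ would force $\wf(G)\ge(p+\varepsilon)/q>p/q$. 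Second, you conclude from $\fs_q(u^\ast)=u^\ast$ that $u^\ast$ is periodic. This conflates ``fixed point of $\fs_q$'' with ``periodic'' in the paper's sense, which additionally requires the supremum defining $\gas_q(u^\ast)$ to be \emph{achieved} by an actual directed path. In the $<$ convention this typically fails, so $u^\ast$ is a fixed point of $\fs_q$ but not a member of $V_\numorb$. Fortunately Definition~\ref{defn:fast-slow-perm} only asks that the limits agree, not that they be periodic, so your partition argument goes through once you drop the claim $u^\ast\in V_\numorb$ and use the barrier fix above.
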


The following theorem is a combination of Theorems~5.1, 5.2, and 5.9 from~\cite{AAR}. Recall that we write $P$ to denote the number of periodic orbits. We let $F$ denote the cardinal number of invariant sets of permanently fast points.

\begin{theorem}\label{thm:homotopy-generic}
Let $G$ be a cyclic graph.
\begin{itemize}
\item If $G$ is generic with $\frac{l}{2l+1}<\wf(G)\le\frac{l+1}{2l+3}$ for some $l\in\N$, then $\cl(G)\simeq S^{2l+1}$. 
\item Let $G$ be singular with $\wf(G)=\frac{l}{2l+1}.$ Then $\cl(G)\simeq\bigvee^\kappa S^{2l}$ for some cardinal number $\kappa$. If $G$ is furthermore closed and continuous with $P$ and $F$ finite, then $\cl(G)\simeq\bigvee^{P+F-1}S^{2l}.$
\end{itemize}
Moreover, if $\iota\colon G\hookrightarrow\tilde{G}$ is an inclusion of generic cyclic graphs with $V(G)=V(\tilde{G})$ and $\frac{l}{2l+1}<\wf(G)\le\wf(\tilde{G})\le\frac{l+1}{2l+3}$, then $\iota$ induces a homotopy equivalence of clique complexes.
\end{theorem}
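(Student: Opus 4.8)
The plan is to assemble the statement from the three results of \cite{AAR} that it bundles together, so the real work is organizational: matching our winding-fraction ranges and our genericity/singularity dichotomy to the corresponding hypotheses in \cite{AAR}, and then checking that the closed-and-continuous refinement lines up with \cite[Theorem 5.9]{AAR}. Before consulting those proofs, however, I would want to reconstruct why each clause is true, since the underlying mechanism is the same in all cases: approximate the (possibly infinite) cyclic graph $G$ by its finite induced subgraphs $G[W]$, apply the finite classification Proposition~\ref{prop:vrchtpy}, and pass to a colimit.

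For the first bullet, the idea behind \cite[Theorem 5.1]{AAR} is that because $\wf(G)=\sup_W\wf(G[W])$ and $G$ is generic with $\frac{l}{2l+1}<\wf(G)\le\frac{l+1}{2l+3}$, one can exhaust $V$ by an increasing chain of finite subsets $W_1\subseteq W_2\subseteq\cdots$ whose induced winding fractions eventually lie strictly inside $(\frac{l}{2l+1},\frac{l+1}{2l+3})$. By Proposition~\ref{prop:vrchtpy} each $\cl(G[W_j])$ is then homotopy equivalent to $S^{2l+1}$, and the homotopy-equivalence-under-inclusion property for finite generic cyclic graphs (the finite-subgraph version of the ``moreover'' clause, also from \cite{AAR}) makes every bonding map $\cl(G[W_j])\hookrightarrow\cl(G[W_{j+1}])$ a homotopy equivalence. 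Since $\cl(G)$ is the colimit of this system, its mapping telescope is homotopy equivalent to $S^{2l+1}$.

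For the second bullet, the same finite-approximation strategy applies when $\wf(G)=\frac{l}{2l+1}$ exactly, but now Proposition~\ref{prop:vrchtpy} produces wedges of even spheres $\bigvee S^{2l}$ whose number of summands may grow along the chain, so the colimit is $\bigvee^\kappa S^{2l}$ for some (possibly infinite) cardinal $\kappa$; this is \cite[Theorem 5.2]{AAR}. To pin down $\kappa=P+F-1$ I would invoke \cite[Theorem 5.9]{AAR}: closedness upgrades the furthest-point map to a genuine self-map $\fs\colon V\to V$, continuity supplies the composition law $\fs_{i+j}=\fs_j\circ\fs_i$, and Lemma~\ref{thm:partition-perm-fast} partitions the permanently fast points into $F$ invariant sets. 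Each of the $P$ periodic orbits and each of the $F$ invariant sets contributes one generator to $H_{2l}$, subject to a single global relation, yielding exactly $P+F-1$ even spheres.

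Finally, the ``moreover'' statement is precisely what legitimizes the telescope argument above, and it holds for the same reason: an inclusion of generic cyclic graphs with a common vertex set and with $\wf$ trapped in the same generic interval fits into a chain of finite inclusions each realizing a homotopy equivalence, and a colimit of homotopy equivalences is a homotopy equivalence. I expect the main obstacle to be the singular closed-and-continuous case: reconciling the combinatorial count $P+F-1$ requires the full invariant-set machinery of \cite{AAR}, and one must verify that the continuity hypothesis is exactly what rules out pathologies like Example~\ref{ex:closed-cts}, where the failure of $\fs_{i+j}=\fs_j\circ\fs_i$ would corrupt the orbit bookkeeping on which the count depends.
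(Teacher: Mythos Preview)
Your proposal is correct and matches the paper's approach: the paper does not give an independent proof of this theorem at all, but simply states it as ``a combination of Theorems~5.1, 5.2, and 5.9 from~\cite{AAR}.'' Your write-up cites exactly the same three results and additionally sketches the colimit/telescope mechanism behind them, which is more than the paper itself provides.
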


\section{Metric cyclic graphs}\label{sec:metric}

In this section we study the properties of cyclic graphs which are also equipped with a compatible metric structure. Many of the properties in this section are generalizations of~\cite[Section~6]{AAR}, in which the cyclic graph has as its vertex set an ellipse of small eccentricity with the Euclidean metric.

\begin{definition}
\label{defn:metric-cyclic}
A cyclic graph $G$ is \emph{metric} when its vertex set $V=V(G)$ is equipped with a metric $\dv$ such that:
\begin{enumerate}
\item the topology induced by $\dv$ is the same as the subspace topology $V$ inherits from its fixed injection into $S^1$, and 
\item for all directed paths $u_0\to u_1\to u_2$ in $G$, we have $\dv(u_0,u_1) < \dv(u_0,u_2)$.
\end{enumerate}
\end{definition}

It is important to remark that the metric $\dv$ on $V$ need not be the same as the geodesic metric on $S^1$. Indeed, the example of interest in this paper is when $V$ is a subset of the regular polygon $P_n$ equipped with the Euclidean metric. We identify $V$ as a subset of $S^1$ via the fixed injective map from $P_n$ into $S^1$ as described in Section~\ref{sec:preliminaries}. However, the metric $\dv$ on $V$ is the restriction of the Euclidean metric on $P_n\subseteq\R^2$, which is different from the geodesic metric on $S^1$.

\subsection{Continuity in metric cyclic graphs}

The main goal of this section is to define a map that ``moves forward by distance $r$" in a metric cyclic graph (Definition~\ref{def:gv}), and to state its continuity properties. This map will allow us to define stars in metric cyclic graphs (Section~\ref{ss:metric-stars}), which are closely related to the winding fractions of Vietoris--Rips metric cyclic graphs (Section~\ref{ss:metric-VR}).

Recall from Definition~\ref{defn:cyc-dyn-inf} that the winding number map $\gas_m \colon V\to\R$ is defined by
\[ \gas_m(u_0) := \sup\Biggl\{\sum_{i=0}^{m-1}\ds(u_i,u_{i+1})~\Bigg|~\mbox{ there exists }u_0\diredge u_1\diredge\ldots\diredge u_m\Biggr\}, \]
and that the furthest point map $\fs_m\colon V\to S^1$ is defined by $\fs_m(u):=(u+\gas_m(u))\mod 1$. It will be convenient to have a version of $\gamma_1$ that is measured not using the geodesic metric on $S^1$, but instead using the metric $\dv$ on $V$.

\begin{definition} Let $G$ be a closed and continuous metric cyclic graph. We define the map $\gav \colon V\to \R$ by $\gav(u) := \dv(u,\fs_1(u))$. That is,
\[ \gav(u_0) := \sup\{\dv(u_0,u_1)~|~\mbox{ there exists }u_0\diredge u_1\}. \]
\end{definition}

Intuitively speaking, whereas $\gas_1(u)$ measures the length of the furthest step one can take from $u$ in the counterclockwise direction using the geodesic distance on $S^1$, the value $\gav(u)$ instead measures this distance using the metric on $V$.

\begin{definition} 
\label{def:subset-subscript}
Let $V$ be homeomorphic to a subset of $S^1$, and let $u_0,u_1 \in V$. Then we write $(u_0,u_1)_V$ to denote the set $\{w \in V~|~u_0 \prec w \prec u_1 \prec u_0\}$. 
\end{definition}

\begin{definition}[Functions based at a point]\label{def:gv}
Let $G$ be a closed and continuous metric cyclic graph with $V$ homeomorphic to $S^1$, and let $u \in V$. For any $r \in [0,\gav(u)]$, define $\gv_r(u)$ to be the unique point $w$ in the interval $[u,\fs_1(u)]_{V}$ such that $\dv(u,w) = r$. Also define a map $d_{V,u} \colon [u,\fs_1(u)]_V\to \R$ by setting $d_{V,u}(w) := \dv(u,w)$ for each $w \in [u,\fs_1(u)]_V$.
\end{definition}

A solution $w$ to the equation $\dv(u,\cdot) = r$ exists since $G$ is closed and continuous by the intermediate value theorem. Uniqueness of $w$ follows since $d_{V,u}$ is monotonically increasing on the interval $(u,\fs_1(u))_V$. 

\begin{lemma}\label{lem:g_r-cont-r}
Let $G$ be a closed and continuous metric cyclic graph with $V$ homeomorphic to $S^1$. For $u \in V$, the function $g_r\colon [0,\gav(u)]\to V$ is continuous.
\end{lemma}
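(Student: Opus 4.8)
The plan is to recognize the map $r\mapsto \gv_r(u)$ as the inverse of the function $d_{V,u}$ from Definition~\ref{def:gv}, and then to apply the standard fact that a continuous bijection from a compact space to a Hausdorff space is a homeomorphism. First I would dispose of the degenerate case $\gav(u)=0$, where the domain $[0,\gav(u)]$ is a single point and there is nothing to prove; so assume $\gav(u)>0$ and fix $u$. Write $A:=[u,\fs_1(u)]_V$ for the closed counterclockwise arc on which $\gv_r(u)$ is required to live. Because $G$ is closed, $V$ is a closed subset of $S^1$, and together with the hypothesis that $V$ is homeomorphic to $S^1$ this forces $V=S^1$ (a connected closed subset of $S^1$ is a point, an arc, or all of $S^1$, and only $S^1$ is homeomorphic to $S^1$). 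Hence $A$ is a nondegenerate closed arc, and in particular is compact and connected.

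Next I would verify that $d_{V,u}\colon A\to\R$, $w\mapsto\dv(u,w)$, is a continuous, strictly increasing bijection onto $[0,\gav(u)]$. Continuity is immediate: by Definition~\ref{defn:metric-cyclic}(1) the topology induced by $\dv$ coincides with the subspace topology from $S^1$, so $\dv(u,\cdot)$ is continuous on $V$. Strict monotonicity on $A$ is the observation recorded after Definition~\ref{def:gv}: for $u\preceq w_1\prec w_2\preceq\fs_1(u)$, two applications of the cyclic axiom to the edge $u\to\fs_1(u)$ produce a directed path $u\to w_1\to w_2$, whence Definition~\ref{defn:metric-cyclic}(2) gives $\dv(u,w_1)<\dv(u,w_2)$. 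Since $d_{V,u}(u)=0$ and $d_{V,u}(\fs_1(u))=\gav(u)$, connectedness of $A$ and the intermediate value theorem show the image is exactly $[0,\gav(u)]$, while strict monotonicity gives injectivity. By the very definition of $\gv_r(u)$, the inverse of this bijection is precisely $r\mapsto\gv_r(u)$.

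Finally, since $A$ is compact and $[0,\gav(u)]\subseteq\R$ is Hausdorff, the continuous bijection $d_{V,u}$ is a homeomorphism, so its inverse $r\mapsto\gv_r(u)$ is continuous as a map into $A$; composing with the inclusion $A\hookrightarrow V$ yields the claimed continuity into $V$. The only step requiring genuine care is the identification of the domain arc $A$ as compact and connected---equivalently, pinning down that $V=S^1$ under the stated hypotheses; once that is in hand, the compactness argument is routine. As an alternative to the topological argument, one could give a direct $\varepsilon$--$\delta$ estimate using strict monotonicity of $d_{V,u}$, but invoking the homeomorphism property is cleaner.
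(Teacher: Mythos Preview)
Your proof is correct and takes a cleaner, more structural route than the paper. The paper's argument is the direct $\varepsilon$--$\delta$ estimate you mention as an alternative at the end: it fixes $r$, uses continuity of $d_{V,u}$ to produce nearby points $w^-,w^+$ on either side of $\gv_r(u)$, and shows that for $r'$ close to $r$ the point $\gv_{r'}(u)$ is trapped between them. You instead identify $r\mapsto\gv_r(u)$ globally as the inverse of the continuous bijection $d_{V,u}\colon A\to[0,\gav(u)]$ and invoke the compact--Hausdorff homeomorphism theorem. Your approach makes the underlying reason for continuity transparent and handles the endpoints $r=0$ and $r=\gav(u)$ automatically, whereas the paper's proof only treats $r\in(0,\gav(u))$ explicitly.

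One small wrinkle worth tightening: your phrase ``two applications of the cyclic axiom to the edge $u\to\fs_1(u)$'' presumes such an edge exists, which is not guaranteed (the supremum defining $\gamma_1(u)$ need not be attained, as in a $\vrless{\cdot}{\cdot}$ graph). You already hedge by citing the paper's own monotonicity remark after Definition~\ref{def:gv}, so there is no genuine gap; but for a self-contained argument, when $u\prec w_1\prec w_2\prec\fs_1(u)$ strictly, first choose $v$ with $u\to v$ and $w_2\preceq v\preceq\fs_1(u)$ (such $v$ exist by definition of the supremum and density of the arc), then apply cyclicity to $u\to v$. The endpoint case $w_2=\fs_1(u)$ follows by taking a limit using continuity of $\dv(u,\cdot)$.
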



\begin{proof}[Proof of Lemma~\ref{lem:g_r-cont-r}]
Let $r \in (0,\gav(u))$, and let $\varepsilon > 0$. Recall that the function $d_{V,u}\colon [u, \fs_1(u)]_V \to \R$ is continuous. Hence there exists some $\delta > 0$ such that $\dv(w,\gv_r(u)) < \delta$ for $w\in [u, \fs_1(u)]_V$ implies $|d_{V,u}(w) - r| < \varepsilon$. Let $w^- \in [u,\gv_r(u)]_V$ be a point with $\dv(w^-,\gv_r(u))\le\min\{\delta, \varepsilon\}$, and let $w^+ \in [\gv_r(u), \fs_1(u)]_V$ be such that $\dv(\gv_r(u),w^+)\le\min\{\delta, \varepsilon\}$. Indeed, this is possible since the arc $[u, \fs_1(u)]_V$ is a continuous curve through $\gv_r(u)$. Since the function $d_{V,u}$ is strictly increasing on $[u, \fs_1(u)]_V$, we have that $d_{V,u}(w^-) < r < d_{V,u}(w^+)$. It follows that for $|r - r'| < \min(r - d_{V,u}(w^-),d_{V,u}(w^+) - r)$, we have $\gv_{r'}(u) \in (w^-, w^+)_V$ and thus $d(\gv_r(u), \gv_{r'}(u)) < \varepsilon$.
\end{proof}

For $G$ a closed and continuous metric cyclic graph with vertex set $V$, let $\gamma=\inf_{u\in V}\{\gav(u)\}$.

\begin{lemma}\label{lem:g_r-cont-p}
Let $G$ be a closed and continuous metric cyclic graph with $V$ homeomorphic to $S^1$. For $r \in [0,\gamma]$, the function $\gv_r\colon V \to V$ is continuous.
\end{lemma}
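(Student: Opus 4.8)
The plan is to prove continuity of $\gv_r$ by a compactness argument: along any convergent sequence of basepoints I identify the limit of the images using the two defining properties of $\gv_r(u)$ (its $\dv$-distance from $u$ and its membership in the arc $[u,\fs_1(u)]_V$), and then invoke the uniqueness already recorded just after Definition~\ref{def:gv}. Equivalently, this packages as showing the graph of $\gv_r$ is closed in $V\times V$ and applying the closed graph theorem for maps into the compact Hausdorff space $V$.

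First I would record the preliminary facts. Since $V$ is homeomorphic to $S^1$, it is compact. Because $G$ is continuous, the winding number map $\gas_1$ is continuous, hence so is the furthest point map $\fs_1(u)=(u+\gas_1(u))\bmod 1$; moreover $\gav=\dv(\cdot,\fs_1(\cdot))$ is continuous since $\dv$ is a (continuous) metric. As $r\le\gamma=\inf_{u\in V}\gav(u)\le\gav(u)$ for every $u$, the point $\gv_r(u)$ is defined for all $u\in V$, so $\gv_r\colon V\to V$ is a genuine map into the compact space $V$. Now fix $u\in V$ and a sequence $u_k\to u$. By compactness of $V$ it suffices to show every convergent subsequence of $(\gv_r(u_k))_k$ has limit $\gv_r(u)$; so suppose $\gv_r(u_{k_j})\to w^\ast$. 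On one hand $\dv(u_{k_j},\gv_r(u_{k_j}))=r$ for all $j$, so continuity of $\dv$ gives $\dv(u,w^\ast)=r$. On the other hand $\gv_r(u_{k_j})$ lies on the arc $[u_{k_j},\fs_1(u_{k_j})]_V$, i.e.\ $u_{k_j}\preceq\gv_r(u_{k_j})\preceq\fs_1(u_{k_j})\preceq u_{k_j}$; passing this to the limit (using $\fs_1(u_{k_j})\to\fs_1(u)$) yields $u\preceq w^\ast\preceq\fs_1(u)\preceq u$, so $w^\ast\in[u,\fs_1(u)]_V$. Since $d_{V,u}$ is strictly increasing on this arc, there is a unique point at $\dv$-distance $r$ from $u$, namely $\gv_r(u)$; hence $w^\ast=\gv_r(u)$, completing the argument.

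The step requiring genuine care—and the one I expect to be the main obstacle—is passing the cyclic order relation to the limit. The distance condition passes trivially because $\dv$ is continuous, but the ternary relation $\preceq$ is a closed condition whose closedness must be justified, and one must rule out any ``wrap-around'' degeneracy as the arcs $[u_k,\fs_1(u_k)]_V$ move. I would handle this in one of two equivalent ways. Either check directly that $C=\{(a,b,c):a\preceq b\preceq c\preceq a\}$ is closed in $(S^1)^3$ (it is the image of the compact slab $\{(\tilde a,\tilde b,\tilde c):\tilde a\in[0,1],\ \tilde a\le\tilde b\le\tilde c\le\tilde a+1\}$ under the quotient $\R^3\to(S^1)^3$), so that $\{(u,w):(u,w,\fs_1(u))\in C\}$ is closed by continuity of $\fs_1$; or, perhaps more transparently, lift to the universal cover $\R\to S^1$, choosing lifts $\tilde u_k\to\tilde u$ and setting $\widetilde{\gv_r(u_k)}=\tilde u_k+\ds(u_k,\gv_r(u_k))$, which lies in $[\tilde u_k,\tilde u_k+\gas_1(u_k)]$, and then pass the ordinary real inequalities to the limit using continuity of $\gas_1$. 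Once arc membership is secured, the remainder is immediate from the uniqueness already established.

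I should note that the endpoints of the parameter interval need no special treatment: for $r=0$ the subsequence argument gives $\gv_0(u_{k_j})=u_{k_j}\to u=\gv_0(u)$, and for $r=\gamma$ at a basepoint where the infimum is attained one has $\gv_\gamma(u)=\fs_1(u)$, with uniqueness on $[u,\fs_1(u)]_V$ still valid up to the endpoint. Thus the single compactness-plus-uniqueness argument covers all of $[0,\gamma]$ uniformly.
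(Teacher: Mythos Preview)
Your proof is correct and takes a genuinely different route from the paper's. The paper argues by an explicit $\varepsilon$--$\delta$ estimate that leans on Lemma~\ref{lem:g_r-cont-r} (continuity of $r\mapsto \gv_r(u)$ at a fixed basepoint): given $u'$ close to $u$, the triangle inequality forces $\dv(u',\gv_{r-\delta}(u))<r<\dv(u',\gv_{r+\delta}(u))$, and the intermediate value theorem then locates $\gv_r(u')$ inside the short arc $[\gv_{r-\delta}(u),\gv_{r+\delta}(u)]_V$. Your argument instead bypasses Lemma~\ref{lem:g_r-cont-r} entirely, using only continuity of $\fs_1$ (i.e.\ the ``continuous cyclic graph'' hypothesis) together with compactness of $V$ and the uniqueness recorded after Definition~\ref{def:gv} to show the graph of $\gv_r$ is closed. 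What the paper's approach buys is a concrete modulus of continuity tied to that of $r\mapsto\gv_r(u)$; what yours buys is conceptual economy and independence from Lemma~\ref{lem:g_r-cont-r}. Your handling of the arc-membership limit via closedness of the ternary relation (or the lift to $\R$) is the right place to spend care, and your treatment is sound; note in particular that for $r>0$ the limit point $w^\ast\neq u$, so the potential discontinuity of $\ds$ along the diagonal causes no trouble.
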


\begin{proof}[Proof of Lemma~\ref{lem:g_r-cont-p}]
Fix $\varepsilon > 0$, let $u \in V$ be an arbitrary point. There exists some $\delta > 0$ such that $|r-r'| < \delta$ implies $|\gv_r(u)-\gv_{r'}(u)| < \varepsilon$ by Lemma~\ref{lem:g_r-cont-r}. We fix such a $\delta$ with $\delta<\varepsilon$. Let the point $u' \in V$ be such that $\dv(u',u) < \delta$. By the triangle inequality, we see that $\dv(u',\gv_{r-\delta}(u)) < r < \dv(u', \gv_{r+\delta}(u))$. By the continuity of $d_{V,u}\colon [u, \fs_1(u)]_V\to \R$, there is some point $w'\in[\gv_{r-\delta}(u), \gv_{r+\delta}(u)]_V$ (which implies $\dv(w',\gv_r(u))<\varepsilon$) satisfying $\dv(u',w') = r$. Hence the point $\gv_{r}(u') = w'$ satisfies $\dv(\gv_r(u),\gv_r(u')) < \varepsilon$, and therefore $\gv_r\colon G\to G$ is continuous.
\end{proof}

\begin{lemma}\label{lem:G}
Let $G$ be a closed and continuous metric cyclic graph with $V$ homeomorphic to $S^1$. The function $H \colon V \times [0,\gamma] \to V$ defined by $H(u,r) = \gv_r(u)$ is continuous.
\end{lemma}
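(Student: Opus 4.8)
The plan is to deduce the joint continuity of $H$ from \emph{sequential} continuity, which suffices here because the domain $V\times[0,\gamma]$ is metrizable: $V$ is homeomorphic to $S^1$ and hence compact (in fact, being a subset of $S^1$ homeomorphic to $S^1$, it equals $S^1$ as a set, so its arcs are honest circle arcs), while $[0,\gamma]$ is a compact interval. Accordingly, I would fix a point $(u_*,r_*)\in V\times[0,\gamma]$ together with an arbitrary sequence $(u_n,r_n)\to(u_*,r_*)$, set $w_n:=g_{r_n}(u_n)$, and aim to prove $w_n\to g_{r_*}(u_*)$ in $V$.

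The core of the argument is a subsequence-identification step that exploits the two defining properties of $g_{r_*}(u_*)$ from Definition~\ref{def:gv}, namely that it is the \emph{unique} point of the arc $[u_*,f_1(u_*)]_V$ at $d_V$-distance $r_*$ from $u_*$ (uniqueness coming from the strict monotonicity of $d_{V,u_*}$). Since $V$ is sequentially compact, it is enough to show that every convergent subsequence $w_{n_k}\to w_*$ satisfies $w_*=g_{r_*}(u_*)$; then $w_n\to g_{r_*}(u_*)$ follows. For the distance condition, $d_V(u_{n_k},w_{n_k})=r_{n_k}\to r_*$, so by joint continuity of the metric $d_V$ together with $u_{n_k}\to u_*$ and $w_{n_k}\to w_*$ we obtain $d_V(u_*,w_*)=r_*$. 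For the arc condition, each $w_{n_k}$ lies in $[u_{n_k},f_1(u_{n_k})]_V$; because $G$ is continuous the map $\gamma_1$ is continuous, hence so is $f_1(u)=(u+\gamma_1(u))\bmod 1$, and closedness of $G$ ensures $f_1$ takes values in $V$. Thus $f_1(u_{n_k})\to f_1(u_*)$, the closed counterclockwise arcs converge, and $w_*\in[u_*,f_1(u_*)]_V$. Uniqueness then forces $w_*=g_{r_*}(u_*)$, completing the proof.

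The hard part is the limit passage in the arc membership $w_*\in[u_*,f_1(u_*)]_V$: one must guarantee that the closed counterclockwise arcs $[u_{n_k},f_1(u_{n_k})]_V$ converge to $[u_*,f_1(u_*)]_V$ rather than wrapping around the circle the wrong way. This needs the arcs to be proper, i.e.\ of arc length bounded away from $1$, which holds here because $\gamma_1$ is continuous on the compact set $V$ and bounded by the relevant scale, so $\sup_{u}\gamma_1(u)<1$; I would record this as a one-line observation. As an alternative route staying closer to the two preceding lemmas, one can extend the proof of Lemma~\ref{lem:g_r-cont-p} by perturbing the radius simultaneously: using the decomposition
\[
d_V\bigl(g_{r'}(u'),\,g_{r_*}(u_*)\bigr)\;\le\;d_V\bigl(g_{r'}(u'),\,g_{r_*}(u')\bigr)\;+\;d_V\bigl(g_{r_*}(u'),\,g_{r_*}(u_*)\bigr),
\]
one controls the second summand by Lemma~\ref{lem:g_r-cont-p} at the fixed parameter $r_*$, and the first by fixing $\delta$ so that the sub-arc $[g_{r_*-\delta}(u_*),g_{r_*+\delta}(u_*)]_V$ has $d_V$-diameter below $\varepsilon$ (possible by the uniform continuity of $r\mapsto g_r(u_*)$ on the compact interval $[0,\gamma_V(u_*)]$, itself a consequence of Lemma~\ref{lem:g_r-cont-r}), then checking via the triangle inequality and the intermediate value theorem that $g_{r'}(u')$ lands in this sub-arc whenever $d_V(u',u_*)$ and $|r'-r_*|$ are both below $\delta/2$. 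The same arc-ordering bookkeeping is the only subtlety in that route.
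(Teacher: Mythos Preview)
Your argument is correct and takes a genuinely different route from the paper. The paper does not argue sequentially at all; instead it localizes to a small neighborhood $U$ of $(u,r)$ chosen so that $H(U)$ misses some point $w\in V$, parametrizes $V\setminus\{w\}$ as a subset of $\R$, and observes that the resulting real-valued function $H|_U$ is \emph{monotone} in each variable separately. It then invokes \cite[Proposition~1]{kruse1969joint}, a result of Kruse and Deely stating that a separately continuous function on an open planar region that is monotone in each argument is automatically jointly continuous. Combined with Lemmas~\ref{lem:g_r-cont-r} and~\ref{lem:g_r-cont-p} for separate continuity, this finishes the proof in a single stroke.

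Your subsequence-plus-uniqueness approach has the virtue of being entirely self-contained: it uses only the defining characterization of $g_r(u)$, the compactness of $V\cong S^1$, and the strict monotonicity of $d_{V,u}$ that makes $g_r(u)$ unique. It thereby isolates exactly which geometric fact is doing the work. The paper's route is shorter once the external lemma is granted, and it makes clear why the two preceding lemmas were phrased as separate-variable continuity statements. Your ``hard part'' about the arcs $[u_{n_k},f_1(u_{n_k})]_V$ being proper (arc length bounded away from $1$) is precisely the obstacle the paper sidesteps by the local restriction $H(U)\subseteq V\setminus\{w\}$; neither proof justifies $\sup_u\gamma_1(u)<1$ in full generality, though in the paper's Vietoris--Rips applications it is immediate. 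Your alternative triangle-inequality route would also go through and sits closer in spirit to the proof of Lemma~\ref{lem:g_r-cont-p}, but the compactness argument you lead with is cleaner.
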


\begin{proof}
Consider an arbitrary $u\in V$ and $r\in[0,\gamma]$, and restrict attention to a sufficiently small open neighborhood $U\subseteq V\times[0,\gamma]$, homeomorphic to $\R^2$, that contains $(u,r)$. Further restrict this neighborhood $U$ so that there exists some $w\in V$ with $H(U)\subseteq V\setminus\{w\}$. Parametrize $V\setminus\{w\}$ as a subset of $\R$, and consider the function $H|_U$ as a real-valued function on an open subset of the plane. The function $H|_U$ is monotonic in both of its variables, and so $H|_U$ is jointly continuous by~\cite[Proposition~1]{kruse1969joint}, Lemma~\ref{lem:g_r-cont-r}, and Lemma~\ref{lem:g_r-cont-p}. $H$ is jointly continuous since this argument holds for all $(u,r)\in V\times[0,\gamma]$.
\end{proof}

\subsection{Stars in metric cyclic graphs}\label{ss:metric-stars}

We describe stars in metric cyclic graphs, whose existence determines whether the winding fraction meets or exceeds a singular value of the form $\frac{l}{2l+1}$.

\begin{definition}
Let $G$ be a metric cyclic graph, let $l\ge1$, and let $r\in[0,\gamma]$. A \emph{$(2l+1)$-star of scale $r$} in $G$ is a directed path $u_0\to u_1\to\ldots\to u_{2l}\to u_{2l+1}=u_0$ in $G$ of winding number $l$ such that $\dv(u_i,u_{i+1})=r$ for all $i$. 
\end{definition}

\begin{lemma}\label{lem:star-unique}
Let $G$ be a closed and continuous metric cyclic graph with $V$ homeomorphic to $S^1$, let $l\ge1$, and let $u\in V$. Then there is at most one $(2l+1)$-star in $G$ containing $u_0$.
\end{lemma}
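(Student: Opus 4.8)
The plan is to recognize a $(2l+1)$-star as a periodic orbit of the forward map $\gv_r$, and then to show that the scale $r$ of such an orbit through a fixed vertex is uniquely determined.

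\medskip

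First I would observe that the vertices of a star are \emph{forced} iterates of $\gv_r$. If $u_0\to u_1\to\ldots\to u_{2l+1}=u_0$ is a $(2l+1)$-star of scale $r$, then each edge $u_i\to u_{i+1}$ satisfies $u_{i+1}\in[u_i,\fs_1(u_i)]_V$ (since $\fs_1(u_i)$ is the counterclockwise-most out-neighbor of $u_i$) together with $\dv(u_i,u_{i+1})=r$; by the uniqueness clause following Definition~\ref{def:gv}, this forces $u_{i+1}=\gv_r(u_i)$, so $u_i=\gv_r^{\,i}(u_0)$. Since $\gcd(l,2l+1)=1$, the $2l+1$ vertices are distinct, so after a cyclic relabeling I may assume $u_0=u$. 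Consequently, for each fixed scale $r$ there is at most one star through $u$, and any two distinct stars through $u$ must have distinct scales; it therefore suffices to show that the scale is unique. Lifting $V\hookrightarrow S^1=\R/\Z$ to the universal cover, the continuous map $\gv_r$ (Lemma~\ref{lem:g_r-cont-p}) lifts to $\widetilde{\gv}_r\colon\R\to\R$ with $\widetilde{\gv}_r(x+1)=\widetilde{\gv}_r(x)+1$, and, choosing a lift $\tu$ of $u$, a star of scale $r$ through $u$ exists if and only if $\widetilde{\gv}_r^{\,2l+1}(\tu)=\tu+l$, because the left-hand side minus $\tu$ is exactly the winding number $\sum_{i=0}^{2l}\ds(u_i,u_{i+1})=l$ of the path. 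The proof thus reduces to showing that $r\mapsto\widetilde{\gv}_r^{\,2l+1}(\tu)$ is strictly increasing.

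\medskip

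This strict monotonicity rests on two properties of $\widetilde{\gv}_r$. The first is strict monotonicity in the scale: for fixed $x$, the map $r\mapsto\widetilde{\gv}_r(x)$ is strictly increasing, which is immediate from the strict monotonicity of $d_{V,u}$ on the arc $[u,\fs_1(u)]_V$ used to define $\gv_r$. The second, and the crux, is order preservation in space, i.e.\ that $u\prec u'$ implies $\gv_r(u)\preceq\gv_r(u')$, so that $\widetilde{\gv}_r$ is nondecreasing. Granting both, a telescoping induction on $k$ shows $\widetilde{\gv}_{r'}^{\,k}(\tu)>\widetilde{\gv}_{r}^{\,k}(\tu)$ for all $k\ge1$ whenever $r'>r$: the base case $k=1$ is scale monotonicity, and for the inductive step $\widetilde{\gv}_{r'}^{\,k+1}(\tu)=\widetilde{\gv}_{r'}(\widetilde{\gv}_{r'}^{\,k}(\tu))\ge\widetilde{\gv}_{r'}(\widetilde{\gv}_{r}^{\,k}(\tu))>\widetilde{\gv}_{r}(\widetilde{\gv}_{r}^{\,k}(\tu))=\widetilde{\gv}_{r}^{\,k+1}(\tu)$, where the first inequality uses spatial monotonicity and the second uses strict scale monotonicity at the point $\widetilde{\gv}_r^{\,k}(\tu)$. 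Taking $k=2l+1$ gives the desired strict monotonicity, so the equation $\widetilde{\gv}_r^{\,2l+1}(\tu)=\tu+l$ has at most one solution $r$, and the lemma follows.

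\medskip

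The main obstacle is establishing the spatial order preservation of $\gv_r$; equivalently, I must rule out a ``crossing'' of two stars of scales $r<r'$ through $u=u_0=v_0$ in which $\widetilde{u}_i<\widetilde{v}_i$ but $\widetilde{v}_{i+1}\le\widetilde{u}_{i+1}$ at some first step. Unwinding the cyclic edge condition, such a configuration yields the directed paths $u_i\to v_i\to u_{i+1}$ and $v_i\to v_{i+1}\to u_{i+1}$, and the forward monotonicity axiom (Definition~\ref{defn:metric-cyclic}(2)) applied to the second gives $r'=\dv(v_i,v_{i+1})<\dv(v_i,u_{i+1})$. To close the argument I would pair this with the \emph{reverse} inequality $\dv(v_i,u_{i+1})<\dv(u_i,u_{i+1})=r$ along the first path, so that $r'<\dv(v_i,u_{i+1})<r$ contradicts $r<r'$. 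The key geometric input is therefore this reverse distance-monotonicity along a directed triple (the middle vertex lies closer to the endpoint than the initial vertex does); I would establish it from the metric structure of $V$ and the cyclic edge condition, noting that it holds for the Euclidean metric on $P_n$, which is our principal example. With order preservation in hand, the monotonicity argument above closes.
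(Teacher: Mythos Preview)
Your overall strategy---recognize the star vertices as forced iterates $g_r^{\,i}(u)$, lift to the universal cover, and show that $r\mapsto\widetilde g_r^{\,2l+1}(\tilde u)$ is strictly increasing---is exactly the paper's argument in slightly different dress. The paper runs the interleaving induction $(g_{r'})^{j-1}(u)\prec(g_r)^j(u)\prec(g_{r'})^j(u)$ directly on $S^1$; your lifted version is a bit cleaner, and the contradiction at $j=2l+1$ is the same.

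There is, however, a genuine gap, and it is precisely the one you flag. Spatial order preservation of $g_r$ reduces, as you say, to the \emph{reverse} inequality $d_V(v_i,u_{i+1})<d_V(u_i,u_{i+1})$ along a directed triple $u_i\to v_i\to u_{i+1}$. This is \emph{not} contained in Definition~\ref{defn:metric-cyclic}: axiom~(2) only gives the forward inequality $d_V(u_0,u_1)<d_V(u_0,u_2)$, and because the edges of $G$ run only counterclockwise you cannot apply (2) based at the endpoint $u_{i+1}$ to compare the clockwise points $u_i,v_i$. So your sentence ``I would establish it from the metric structure of $V$ and the cyclic edge condition'' is a promissory note that cannot be redeemed at the stated generality; without it the telescoping step $\widetilde g_{r'}(\widetilde g_{r'}^{\,k}(\tilde u))\ge\widetilde g_{r'}(\widetilde g_{r}^{\,k}(\tilde u))$ is unjustified. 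The paper's terse ``by induction'' elides exactly the same point, so you have correctly located where the argument is delicate---you just have not closed it. For the intended applications ($V=P_n$ or an ellipse with the Euclidean metric) the missing inequality does hold: the curve has a reflection symmetry, so forward monotonicity for the orientation-reversed graph is reverse monotonicity for the original. That symmetry, not Definition~\ref{defn:metric-cyclic} alone, is what you should invoke to finish.
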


\begin{proof}
Suppose for a contradiction that there existed two distinct inscribed equilateral $(2l+1)$-pointed stars in $G$ of side lengths $r$ and $r'$; we may assume $r<r'$. Denote the vertex sets of the stars by $\{u,\gv_r(u),\dots (\gv_r)^{2l}(u)\}$ and $\{u,\gv_{r'}(u),\dots (\gv_{r'})^{2l}(u)\}$. This means we have $u \prec \gv_{r}(u) \prec \gv_{r'}(u)\prec u$, i.e.\ $(\gv_{r'})^{j-1}(u) \prec (\gv_{r})^j(u) \prec (\gv_{r'})^j(u)\prec (\gv_{r'})^{j-1}(u)$ for all $j$ by induction. Letting $j = 2l+1$, we see $(\gv_{r'})^{2l}(u) \neq u$, a contradiction.
\end{proof}

\subsection{Vietoris--Rips metric cyclic graphs}\label{ss:metric-VR}

An important class of metric cyclic graphs are those that are also Vietoris--Rips graphs.

\begin{definition}\label{def:rV}
Let $V$ be a metric space equipped with a fixed continuous homeomorphism to $S^1$. We define $r_V$ to be
\[ r_V=\sup\{r\ge0~|~\vr{V}{r'}\mbox{ is a cyclic graph for all }0<r'<r\}. \]
\end{definition}

Note that the value of $r_V$ is not affected by whether we use the the $<$ or $\le$ convention for the Vietoris--Rips graph. 

\begin{lemma}\label{lem:conn}
If $V$ is a metric space equipped with a fixed homeomorphism to $S^1$, then 
\[ r_V = \sup\{r > 0 : B(u,r')\cap V\mbox{ is connected for all }r'<r\mbox{ and }u\in V\}.\]
\end{lemma}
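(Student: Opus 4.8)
The plan is to reduce the equality of these two suprema to a per-scale equivalence and then to prove that equivalence geometrically. Writing $\rho_V$ for the right-hand side, note that both quantities are ``first-failure'' thresholds: by Definition~\ref{def:rV} we have $r_V = \inf\{r'>0 : \vr{V}{r'}\text{ is not cyclic}\}$, while $\rho_V = \inf\{r'>0 : B(u,r')\cap V\text{ is disconnected for some }u\in V\}$. Hence it suffices to prove, for each fixed scale $r'>0$, the equivalence
\[ \vr{V}{r'}\text{ is cyclic}\quad\Longleftrightarrow\quad B(u,r')\cap V\text{ is connected for every }u\in V. \]
Throughout I would work with the $<$-convention, so that edges of $\vrless{V}{r'}$ are the pairs at distance strictly less than $r'$; since $B(u,r')$ is an \emph{open} ball, the closed neighborhood $N[u]$ in $\vrless{V}{r'}$ is then exactly $B(u,r')\cap V$. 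By the remark following Definition~\ref{def:rV} this entails no loss of generality.

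For the forward direction (cyclic $\Rightarrow$ all neighborhoods connected) I would argue the contrapositive, and this is the robust half. Suppose $B(u,r')\cap V$ is disconnected. As an open subset of $V\cong S^1$ containing $u$, it has a component $A_u\ni u$ and at least one other; choose $v$ in another component, so $\dv(u,v)<r'$ while $u,v$ lie in distinct components. Then \emph{neither} of the two arcs of $S^1$ joining $u$ and $v$ can be contained in $B(u,r')\cap V$, since such an arc would connect $u$ to $v$ inside the ball. Consequently each of these two arcs contains a point $p$ with $\dv(u,p)\ge r'$, i.e.\ a non-neighbor of $u$. Now $\{u,v\}$ is an edge; whichever way it is oriented, the counterclockwise arc from its tail to its head is one of these two arcs and therefore contains a non-neighbor of $u$. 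The cyclic condition of Definition~\ref{defn:cyclic-graph} would force that intermediate point to be adjacent to both endpoints, and in particular to $u$, a contradiction. Hence no orientation makes the graph cyclic, giving $r_V\le\rho_V$.

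For the reverse direction (all neighborhoods connected $\Rightarrow$ cyclic) I would proceed constructively. Assuming every $B(u,r')\cap V$ is a connected open arc $A_u\ni u$, the closed neighborhoods of $\vrless{V}{r'}$ are precisely arcs in the cyclic order, and I would build a cyclic orientation by orienting each edge \emph{along the arc joining its endpoints that lies inside the balls}. For an edge $\{u,v\}$ exactly one of the two arcs between $u$ and $v$ is contained in $A_u$ (an arc containing both $u$ and $v$ contains one of the two sub-arcs between them), and I would first show that the \emph{same} arc $J_{uv}$ is the one contained in $A_v$. Orienting $u\to v$ precisely when $J_{uv}$ runs counterclockwise from $u$ to $v$ then yields a well-defined directed graph with no $2$-cycles, and the cyclic condition is immediate: the counterclockwise arc from $u$ to $v$ equals $J_{uv}\subseteq A_u\cap A_v$, so any $w$ with $u\prec w\prec v\prec u$ satisfies $\dv(u,w)<r'$ and $\dv(w,v)<r'$, whence $u\to w\to v$. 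This gives $\rho_V\le r_V$ and completes the proof. (Alternatively, one may invoke the structural characterization of cyclic graphs, reducing this direction to the observation that all closed neighborhoods are arcs.)

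The main obstacle is the one step in the reverse direction showing that the two balls select the \emph{same} connecting arc, $J_{uv}=J_{vu}$, equivalently that the orientation can be chosen consistently. This is exactly where a neighborhood subtending more than a semicircle must be handled with care: the naive ``counterclockwise within angular distance $\tfrac12$'' rule can fail, because a pair may be metrically close yet separated by the wrong arc, and one must instead follow the arc that actually lies inside the ball. I expect the cleanest route to rule out $J_{uv}\ne J_{vu}$ is to assume otherwise, so that $A_u$ and $A_v$ cover complementary arcs and hence all of $V$, and then to extract a disconnection of some third neighborhood, contradicting the standing hypothesis. Establishing this consistency carefully, rather than the two routine containment checks, is where essentially all the work lies.
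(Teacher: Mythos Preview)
Your approach is the same as the paper's: establish the equality by showing, at each fixed scale $r'$, that cyclicity of $\vr{V}{r'}$ is equivalent to connectedness of every $B(u,r')\cap V$. Your contrapositive argument for ``some ball disconnected $\Rightarrow$ not cyclic'' is correct and in fact more careful than the paper's one-line version of that direction.

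For the converse, you and the paper diverge in an instructive way. The paper simply starts from a directed edge $u_0\to u_1$, infers from connectedness of $B(u_0,r')$ that every intermediate $w$ satisfies $u_0\to w$, and then writes ``by the triangle inequality, we also have $w\to u_1$.'' But the triangle inequality only yields $d_V(w,u_1)<2r'$; what is actually needed is that $B(u_1,r')$ contains the \emph{same} arc $[u_0,u_1]$, so that $w\in B(u_1,r')$. That is precisely the orientation-consistency issue you isolate as the main obstacle: the paper's proof has the same gap you do, hidden behind an appeal to the triangle inequality rather than acknowledged.

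Your proposed route to close it---assume $J_{uv}\neq J_{vu}$, so $B(u,r')\cup B(v,r')=V$, and extract a disconnection of a third neighborhood---is plausible but, as you say, not carried out. So strictly speaking your argument is incomplete at the same point where the paper's is; you have simply been more honest about where the difficulty lies. One remark that may help: because the right-hand side quantifies over \emph{all} $r''<r$, you may freely use that every smaller-radius ball is also connected, which forces $d_V(u,\cdot)$ to be monotone along each arc out of $u$ within $B(u,r')$; this extra monotonicity is a natural tool for the consistency step and is implicitly available in the paper's intended applications to $P_n$.
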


\begin{proof}
For the $\ge$ direction, let $r>0$, and suppose $B(u,r') \cap V$ is connected for all $r'<r$ and for all $u\in V$. Let $u_0,u_1 \in V$ with $u_0 \to u_1$ in $\vr{V}{r'}$. Since $B(u_0,r') \cap V$ is connected, for any $w \in V$ such that $u_0 \prec w \prec u_1 \prec u_0$, we have $u_0 \to w$. By the triangle inequality, we also have $w \to u_1$. Thus $\vr{V}{r'}$ is a cyclic by Definition~\ref{defn:cyclic-graph}, and furthermore a metric cyclic graph by Definition~\ref{defn:metric-cyclic}. This gives that $r_V\ge r$.

For the reverse direction, note that if any $B(p,r)\cap V$ is not connected, then $\vr{V}{r}$ is not cyclic. 
\end{proof}

\begin{lemma}\label{lem:metric-closed-cont}
If $V$ is a metric space equipped with a fixed homeomorphism to $S^1$, then $\vr{V}{r}$ is a closed and continuous cyclic graph for all $r<r_V$.
\end{lemma}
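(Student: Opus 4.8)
The plan is to verify the two defining properties from Definition~\ref{defn:closed-cts} separately. Closedness is immediate: the fixed homeomorphism $V\to S^1$ has image all of $S^1$, so once $V$ is identified with this image it equals $S^1$, which is closed in itself. All of the work lies in showing that $\vr{V}{r}$ is \emph{continuous}, i.e.\ that each winding number map $\gas_m\colon V\to\R$ of Definition~\ref{defn:cyc-dyn-inf} is continuous. I would first treat the case $m=1$ and then bootstrap to all $m$.

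For $m=1$, the starting point is Lemma~\ref{lem:conn}: since $r<r_V$, the set $B(u,r')\cap V$ is connected for every $u\in V$ and every $r'<r_V$, and in particular it is a genuine counterclockwise arc through $u$. Using the $\le$ convention, this lets me identify $\fs_1(u)$ as the counterclockwise endpoint of this arc, so that $\gas_1(u)=\ds(u,\fs_1(u))$. Upper semicontinuity of $\gas_1$ is the easy direction: if some $w$ lies strictly counterclockwise beyond $\fs_1(u)$, then $w$ is not an out-neighbor and hence $\dv(u,w)>r$ strictly; combined with continuity of the metric $\dv$, this prevents the reachable arc from suddenly growing under small perturbations of the center, so $\limsup_{u'\to u}\gas_1(u')\le\gas_1(u)$.

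Lower semicontinuity is where I expect the real difficulty. It can fail only when $\dv(u,\cdot)$ is constant, equal to the critical value $r$, on a nondegenerate forward sub-arc ending at $\fs_1(u)$: in that case a small motion of the center could push this entire ``flat arc'' outside distance $r$ all at once, causing $\gas_1$ to drop discontinuously. The main step of the proof is therefore to rule out such flat arcs using the strict inequality $r<r_V$. The idea is that a flat arc of $\dv(u,\cdot)$ at radius $r$ would create a second local minimum of the distance function from some nearby center $p$, which would disconnect a ball $B(p,\rho)\cap V$ with $\rho<r_V$, contradicting Lemma~\ref{lem:conn}. Making this geometric mechanism precise is the crux of the argument, and the place where the hypothesis $r<r_V$ (rather than merely $r\le r_V$) is essential.

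Granting continuity of $\gas_1$, the map $\fs_1\colon V\to V$ is continuous (its codomain is $V$ precisely because $V$ is closed) and order-preserving on $S^1$. I would then show that the counterclockwise-most greedy path is optimal, so that $\fs_m=\fs_1^{\circ m}$ and $\gas_m(u)=\sum_{i=0}^{m-1}\gas_1(\fs_1^{i}(u))$ for every $m\ge1$; this is exactly the cyclic monotonicity comparison already used in the proof of Lemma~\ref{lem:fin-cyc-wf-const}, now applied pathwise rather than to periodic orbits. A finite sum of continuous functions is continuous, so $\gas_m$ is continuous for all $m\ge1$, and hence $\vr{V}{r}$ is a closed and continuous cyclic graph.
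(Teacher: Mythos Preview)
Your approach is far more detailed than the paper's, which simply asserts that ``$\gamma_m$ is continuous since the metric $d_V$ on $V$ is topologically equivalent to the geodesic metric on $S^1$''.  That sentence is really just a restatement of the hypothesis, not a proof; you are right that all of the content lies in showing continuity of $\gamma_1$, and your decomposition into upper and lower semicontinuity, followed by the bootstrap $\gamma_m(u)=\sum_{i=0}^{m-1}\gamma_1(f_1^{\,i}(u))$, is exactly the right structure.  You also correctly isolate the only possible obstruction, namely a ``flat arc'' where $d_V(u,\cdot)$ is constant equal to $r$ on a nondegenerate subarc.

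The gap is in your proposed mechanism for ruling out such flat arcs.  The hypothesis $r<r_V$, via Lemma~\ref{lem:conn}, says that $B(u,r')\cap V$ is connected for every $u$ and every $r'<r_V$.  Since these open balls contain $u$, this is equivalent to saying that $t\mapsto d_V(u,u+t)$ is \emph{non-decreasing} on each half-arc from $u$; it does \emph{not} rule out plateaus.  Your argument that a plateau in $d_V(u,\cdot)$ would force a second local minimum of $d_V(p,\cdot)$ for some nearby $p$ then fails: the same hypothesis tells you $d_V(p,\cdot)$ is also non-decreasing along each half-arc from $p$, so it has no interior local minima at all, and no ball $B(p,\rho)\cap V$ is disconnected.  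In short, the strict inequality $r<r_V$ buys you monotonicity but not strict monotonicity, and the latter is what you actually need.  (Your instinct is correct for curves in $\R^2$ with the Euclidean metric: if part of $V$ lies on a circle centred at $u$, then moving the centre off $u$ does create a genuine local minimum on that arc.  But this uses the specific geometry of $\R^2$, not just connectedness of balls.)

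So neither the paper's one-line proof nor your proposal establishes continuity at the stated level of generality.  For the paper's actual applications---$V=P_n$ with the Euclidean metric---the function $d_V(u,\cdot)$ is piecewise of the form $\sqrt{c^2+t^2}$ along each edge and hence strictly monotone, so flat arcs do not occur and your argument goes through.  If you want a version that works in the generality stated, you should either add a hypothesis forbidding plateaus (e.g.\ that $d_V(u,\cdot)$ is strictly increasing on each half-arc, which is close to condition~(2) in Definition~\ref{defn:metric-cyclic}), or find a genuinely different argument.
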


\begin{proof}
Let $r<r_V$. The definition of $r_V$ implies that $\vr{V}{r}$ is cyclic. Furthermore, $\vr{V}{r}$ is closed since $V$ is homeomorphic to $S^1$. Note that for all $m\ge 1$, the map $\gamma_m\colon V\to \R$ is continuous since the metric $d_V$ on $V$ is topologically equivalent to the geodesic metric on $S^1$ (see (1) in Definition~\ref{defn:metric-cyclic}). It follows that $\vr{V}{r}$ is a continuous cyclic graph.
\end{proof}

\begin{definition}\label{def:s-fns}
Let $V$ be a metric space equipped with a fixed homeomorphism to $S^1$, and fix $l\ge 1$. Let $V'\subseteq V$ be the set of all points $u\in V$ for which there is a $(2l+1)$-star in $\vr{V}{r}$ containing $u$ for some $r\in(0,r_V)$. Define the function $s_{2l+1}\colon V'\to \R$ by sending a point $u\in V'$ to the scale parameter $r$ of the (necessarily unique) corresponding $(2l+1)$-star.
\end{definition}

\begin{lemma}
\label{lem:s-cts}
The function $s_{2l+1}\colon V' \to (0,r_V)$ is continuous.
\end{lemma}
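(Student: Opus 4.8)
The plan is to prove continuity of $s_{2l+1}$ by exhibiting it as arising from the continuity of the homotopy-like map $H(u,r) = g_r(u)$ established in Lemma~\ref{lem:G}, together with the uniqueness of stars (Lemma~\ref{lem:star-unique}). The key conceptual point is that a $(2l+1)$-star of scale $r$ containing $u$ is precisely a solution of the equation
\[ (g_r)^{2l+1}(u) = u, \]
where the left-hand side is the $(2l+1)$-fold iterate of $g_r$ applied to $u$. Since $H$ is jointly continuous in $(u,r)$, the iterated map $\Phi(u,r) := (g_r)^{2l+1}(u)$ is also jointly continuous (as a finite composition of continuous maps, being careful that each intermediate point stays in the domain where $g_r$ is defined, i.e.\ within the region where the relevant out-step of length $r$ exists). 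The function $s_{2l+1}$ is then characterized implicitly by $\Phi(u, s_{2l+1}(u)) = u$, and continuity should follow from an implicit-function-type or net/sequence argument exploiting the strict monotonicity used in Lemma~\ref{lem:star-unique}.

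Concretely, first I would fix $u \in V'$ and write $r = s_{2l+1}(u)$. I would argue that for $r' < r$ close to $r$, the iterate $(g_{r'})^{2l+1}(u)$ lies strictly counterclockwise-behind $u$ (the star ``undershoots''), while for $r' > r$ close to $r$ it overshoots; this is exactly the strict-monotonicity phenomenon driving the contradiction in Lemma~\ref{lem:star-unique}, namely that $u \prec g_r(u) \prec g_{r'}(u) \prec u$ propagates through the iterates to give $(g_{r'})^{2l}(u) \neq u$ for $r' \neq r$. Thus along the single point $u$, the scalar-valued function $r' \mapsto \Phi(u,r')$ (measured as displacement relative to $u$) crosses the value $u$ strictly and transversally at $r' = r$. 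This gives the local picture that pins down $s_{2l+1}(u)$ as a simple zero.

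Next, to upgrade to continuity as $u$ varies, I would take a sequence (or net) $u_n \to u$ in $V'$ with $r_n = s_{2l+1}(u_n)$, and show $r_n \to r$. Since the $r_n$ lie in the bounded interval $(0, r_V)$, I would pass to a convergent subsequence $r_n \to r^*$. By joint continuity of $\Phi$ from Lemma~\ref{lem:G}, $\Phi(u_n, r_n) = u_n$ passes to the limit to give $\Phi(u, r^*) = u$, so $r^*$ is the scale of a $(2l+1)$-star at $u$; by uniqueness (Lemma~\ref{lem:star-unique}) we get $r^* = r = s_{2l+1}(u)$. As every subsequential limit equals $r$, the full sequence converges, establishing continuity.

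The main obstacle I anticipate is the domain/definedness bookkeeping for the iterated map $\Phi$. The map $g_r(u)$ is only defined for $r \in [0, \gamma_V(u)]$, and along the iteration the admissible range of $r$ may shrink; I must ensure that in a neighborhood of $(u, s_{2l+1}(u))$ all $2l+1$ successive applications of $g_{r'}$ to nearby basepoints remain within the valid regime, so that $\Phi$ is genuinely continuous there rather than merely on a lower-dimensional slice. This requires a uniform lower bound on $\gamma_V$ along the star's orbit, which should follow from compactness of the orbit together with the definition $\gamma = \inf_{u \in V} \gamma_V(u) > 0$ (or a local version thereof), but it is the step where the argument must be made careful rather than waved through.
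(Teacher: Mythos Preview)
Your proposal is correct and in fact contains two valid arguments. The overshoot/undershoot picture in your second paragraph is essentially the paper's proof: the paper makes the monotonicity of $r\mapsto (g_r)^{2l+1}(u)$ precise, uses it to sandwich $u$ strictly between $(g_{r-\varepsilon})^{2l+1}(u)$ and $(g_{r+\varepsilon})^{2l+1}(u)$, then invokes continuity of $(g_{r\pm\varepsilon})^{2l+1}$ in the basepoint (Lemma~\ref{lem:g_r-cont-p}, not the joint continuity of Lemma~\ref{lem:G}) to transport this sandwich to nearby $\tilde u$, and finally applies the intermediate value theorem in $r$ to locate $r^*\in(r-\varepsilon,r+\varepsilon)$ with $(g_{r^*})^{2l+1}(\tilde u)=\tilde u$. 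So if you stopped after your second paragraph and wrote that out, you would have the paper's argument verbatim.

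Your third paragraph takes a genuinely different route: a subsequential-limit argument using the joint continuity of Lemma~\ref{lem:G} and uniqueness from Lemma~\ref{lem:star-unique}. This works too, but carries one extra obligation you do not mention: the subsequential limit $r^*$ a priori lies only in $[0,r_V]$, and the equation $\Phi(u,r^*)=u$ is trivially satisfied at $r^*=0$ (with winding number $0$, not $l$), so uniqueness of $(2l+1)$-stars does not directly force $r^*=s_{2l+1}(u)$. You must also argue that the winding number is preserved in the limit (it is, being a continuous integer-valued quantity along the convergent sequence), which rules out $r^*=0$; and that $\Phi$ extends to, or the $r_n$ stay bounded away from, $r_V$. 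These are easy patches, and your domain-bookkeeping remark shows you are alert to this flavor of issue, but the paper's direct IVT argument avoids them entirely. The trade-off is that your sequential approach is more portable (it only needs joint continuity plus uniqueness, not monotonicity), whereas the paper's argument is slightly cleaner here because monotonicity is already in hand from Lemma~\ref{lem:star-unique}.
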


\begin{proof}
Let $u\in V'$ and let $r<r_V$. Since $\gv_r(u)$ is the unique solution $q$ to $d_{V,u}(q) = r$ on $[u,f_1(u)]_V$, and since $d_{V,u}$ is strictly increasing along this interval, we see that $\gv_r(u)$ is monotonic in $r$, that is, for $0 < r < r' < \gav(u)$ we have $u \prec \gv_r(u) \prec \gv_{r'}(u) \prec u$. It follows that for $|r - r'|$ sufficiently small, we have 
\begin{equation}\label{eq:monotonic}
(\gv_r)^{2l}(u) \prec (\gv_r)^{2l+1}(u) \prec (\gv_{r'})^{2l+1}(u) \prec (\gv_r)^{2l}(u).
\end{equation}

Let $u \in V'$ and let $r = s_{2l+1}(u)<r_V$. Let $\varepsilon>0$ be an arbitrarily small constant satisfying $0<r-\varepsilon$ and $r+\varepsilon<r_V$; by \eqref{eq:monotonic} we have
\[(\gv_r)^{2l}(u) \prec (\gv_{r-\varepsilon})^{2l+1}(u) \prec u \prec (\gv_{r+\varepsilon})^{2l+1}(u) \prec (\gv_r)^{2l}(u).\]
By continuity of $(\gv_r)^{2l+1}\colon V'\to V'$, for $\tu\in V'$ with $d(u,\tu)$ sufficiently small we have
\[(\gv_r)^{2l}(u) \prec (\gv_{r-\varepsilon})^{2l+1}(\tu) \prec \tu \prec (\gv_{r+\varepsilon})^{2l+1}(\tu) \prec (\gv_r)^{2l}(u).\]
The monotonicity and continuity of $(\gv_r)^{2l+1}$ then imply there exists some $r^*\in(r-\varepsilon,r+\varepsilon)$ with $(\gv_{r^*})^{2l+1}(\tu) = \tu$. Hence $s_{2l+1}(\tu)=r^*$ with $|s_{2l+1}(u)-s_{2l+1}(\tu)|=|r - r^*| < \varepsilon$, and so $s_{2l+1}$ is continuous.
\end{proof}

\begin{definition}\label{def:s,t}
Let $V$ be a metric space equipped with a fixed homeomorphism to $S^1$. We define $s_{V,l}$ and $t_{V,l}$ to be
\[s_{V,l}=\sup\Bigl\{r<r_V~\Big|~\wf(\vr{V}{r})<\frac{l}{2l+1}\Bigr\}
\quad\mbox{and}\quad
t_{V,l}=\sup\Bigl\{r<r_V~\Big|~\wf(\vr{V}{r})\le\frac{l}{2l+1}\Bigr\}.\]
\end{definition}
By definition we have $s_{V,l}\le t_{V,l}\le r_V$. Roughly speaking, $s_{V,l}$ is the smallest scale of an inscribed $(2l+1)$-star in $\vr{V}{r}$, and $t_{V,l}$ is the largest scale of such an inscribed star. Note that if $V$ is homeomorphic to $S^1$ then $t_{V,0}=0$. By Theorem~~\ref{thm:homotopy-generic}, we know that the values $s_{V,l}$ and $t_{V,l}$ are critical scale parameters where the homotopy type of $\vrc{V}{r}$ changes.

\section{Geometric lemmas for regular polygons}\label{sec:geometric}

We now specialize to the specific case when $V=P_n$ is a regular polygon with $n$ sides in the plane, equipped with the Euclidean metric. The first question we address is finding values of $r > 0$ for which $\vr{P_n}{r}$ is a cyclic graph (Section~\ref{ss:cyclic}). As usual, the topology of the Vietoris--Rips complex $\vrc{P_n}{r}$ is then completely determined whenever the winding fraction $\wf(\vr{P_n}{r})$ is generic. We therefore focus attention on the case when the winding fraction is singular. That is, we will be interested in characterizing the scale parameters for which equilateral $(2l+1)$-stars of some side length $r$ can be inscribed into $P_n$, which determine when the winding fraction of $\vr{P_n}{r}$ first reaches $\frac{l}{2l+1}$ (Section~\ref{ss:side-length}). Finally, we will be interested in the number of such stars in this singular regime (Section~\ref{ss:counting}). This will give us a count of the number of periodic orbits in the dynamical system on the Vietoris-Rips graph, which will allow us to invoke Theorem~\ref{thm:homotopy-generic} in proving the homotopy types of $\vrc{P_n}{r}$ in Theorem~\ref{thm:main-general}.

Up to a rigid isometry, we can assume that the vertices of the regular polygon $P_n$ are the $n$th roots of unity $1,\omega,\omega^2,\ldots,\omega^{n-1}$ in the complex plane, where $\omega=e^{2\pi i/n}$ (we are implicitly using the canonical identification between $\R^2$ and $\mathbb{C}$).
We will often refer to the ``corner" points $1,\omega,\omega^2,\ldots,\omega^{n-1}$ as \emph{vertices} of $P_n$, to distinguish them from the other points of $P_n$ (which are of the form $t\omega^j+(1-t)\omega^{j+1}$ for $0<t<1$).

\subsection{Cyclic graph regime}\label{ss:cyclic} In this subsection, we characterize the parameter values $r$ for which $\vr{P_n}{r}$ is a cyclic graph, and hence our machinery can be applied.

In the case when $V=P_n$, we use the symbol $r_n$ to denote $r_V=r_{P_n}$ (Definition~\ref{def:rV}). The following values for $r_n$ correspond to distances between vertices of $P_n$ and their projections onto the ``opposite" side of $P_n$. An illustration for $n = 5,6$ is provided in Figure~\ref{fig:polygon-rn}.

\begin{proposition}\label{prop:connected}
If $n\ge 4$, then \begin{equation*}
r_n = \begin{cases}
2\cos(\pi/n) & \mbox{ if } n \mbox{ is even} \\[10pt]
1+\tfrac{\cos(2\pi/n)}{\cos(\pi/n)} & \mbox{ if } n \mbox{ is odd.} \\
\end{cases}
\end{equation*}
It follows that $\vr{X}{r}$ is a closed and continuous metric cyclic graph for all $r<r_n$.
\end{proposition}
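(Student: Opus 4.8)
The plan is to reduce the computation of $r_n$ to a connectivity question via Lemma~\ref{lem:conn}, which gives $r_n = r_{P_n} = \sup\{r>0 : B(u,r')\cap P_n \text{ is connected for all } r'<r \text{ and } u\in P_n\}$. So I would fix a center $u\in P_n$ and analyze the function $\phi_u\colon P_n\to\R$ given by $\phi_u(x)=\|u-x\|$, whose sublevel set $\phi_u^{-1}([0,r'))$ is exactly $B(u,r')\cap P_n$. The key structural observation is that, restricted to any single edge of $P_n$, the Euclidean distance $\phi_u$ is a convex function of arclength; hence on each edge $\phi_u$ has at most one interior critical point, which is a minimum (the foot of the perpendicular from $u$ to that edge), and every local maximum of $\phi_u$ occurs at a vertex. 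Consequently $B(u,r')\cap P_n$ is a disjoint union of open arcs which is connected if and only if $r'$ does not exceed the smallest value attained by $\phi_u$ at a \emph{far local minimum} (a local minimum other than the global minimum $\phi_u(u)=0$). Writing $\rho(u)$ for this smallest far-local-minimum value, the problem becomes the computation of $\inf_{u\in P_n}\rho(u)$, where the far local minima are precisely the perpendicular distances from $u$ to those non-adjacent edges whose perpendicular foot lies in the edge interior.

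For even $n$ the sides of $P_n$ occur in parallel opposite pairs separated by twice the apothem, namely $2\cos(\pi/n)$. For any $u$ in the interior of an edge the perpendicular foot onto the parallel opposite edge is interior, producing a far local minimum of value exactly $2\cos(\pi/n)$, while every other non-adjacent edge is strictly farther; hence $\rho(u)=2\cos(\pi/n)$ on edge interiors and no center does better, giving $r_n=2\cos(\pi/n)$.

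For odd $n$ the point antipodal to the midpoint of an edge is a vertex, so I would track, as $u$ moves along a fixed edge $e$ toward one of its endpoints, the far local minimum living on the adjacent far edge. Placing coordinates so that this far edge lies on the vertical line $x=-\cos(\pi/n)$ with upper endpoint the vertex $(-\cos(\pi/n),\sin(\pi/n))$ and inward normal $(1,0)$, the relevant far-local-minimum value equals the horizontal distance from $u$ to that line and decreases monotonically until the perpendicular foot reaches the far vertex. The infimum is therefore approached in the limit where $u$ is the point of $e$ at height $\sin(\pi/n)$, i.e.\ $u=(-\cos(\pi/n)+r_n,\sin(\pi/n))$; imposing that $u$ lie on the line $x\cos(\pi/n)+y\sin(\pi/n)=\cos(\pi/n)$ carrying $e$ and solving the resulting linear equation yields $r_n=1+\tfrac{\cos(2\pi/n)}{\cos(\pi/n)}$. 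I would then check, using the reflection symmetry of $e$ and a comparison of perpendicular distances to the remaining edges, that no other edge or center produces a smaller far local minimum.

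The main obstacle I anticipate is making the disconnection criterion fully rigorous and uniform in $u$: one must justify that a new connected component of $B(u,r')\cap P_n$ is born exactly as $r'$ crosses a far local minimum (using the flanking vertices as barriers), and in the odd case one must handle the fact that the infimum over centers is only approached — as the perpendicular foot slides onto the far vertex the bounding value tends to $r_n$ from above without being attained — so that the supremum in Lemma~\ref{lem:conn} genuinely equals the stated value. Once $r_n$ is established, the final clause is immediate: $P_n$ is compact and homeomorphic to $S^1$, so by Lemma~\ref{lem:metric-closed-cont} the graph $\vr{P_n}{r}$ is closed and continuous for every $r<r_n$, and it is metric because the Euclidean metric on $P_n$ satisfies condition~(2) of Definition~\ref{defn:metric-cyclic}; hence $\vr{P_n}{r}$ is a closed and continuous metric cyclic graph for all $r<r_n$.
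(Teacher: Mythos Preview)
Your proposal is correct and follows essentially the same route as the paper's proof: both reduce via Lemma~\ref{lem:conn} to the question of when $B(u,r')\cap P_n$ first disconnects, identify the critical configuration as the perpendicular from $u$ onto a far edge, and then compute the resulting distance (twice the apothem for $n$ even; the ``sliding foot hits the far vertex'' limit for $n$ odd). Your framing in terms of the far local minima of $\phi_u$ makes the disconnection criterion a bit more explicit than the paper's ``it is not hard to check'' statements, but the geometric content and the level of rigor are the same, and both proofs close with the appeal to Lemma~\ref{lem:metric-closed-cont}.
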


\begin{figure}
\begin{minipage}{0.6\textwidth}
\begin{center}
\captionsetup{width=.9\linewidth}
\def\svgwidth{\linewidth}
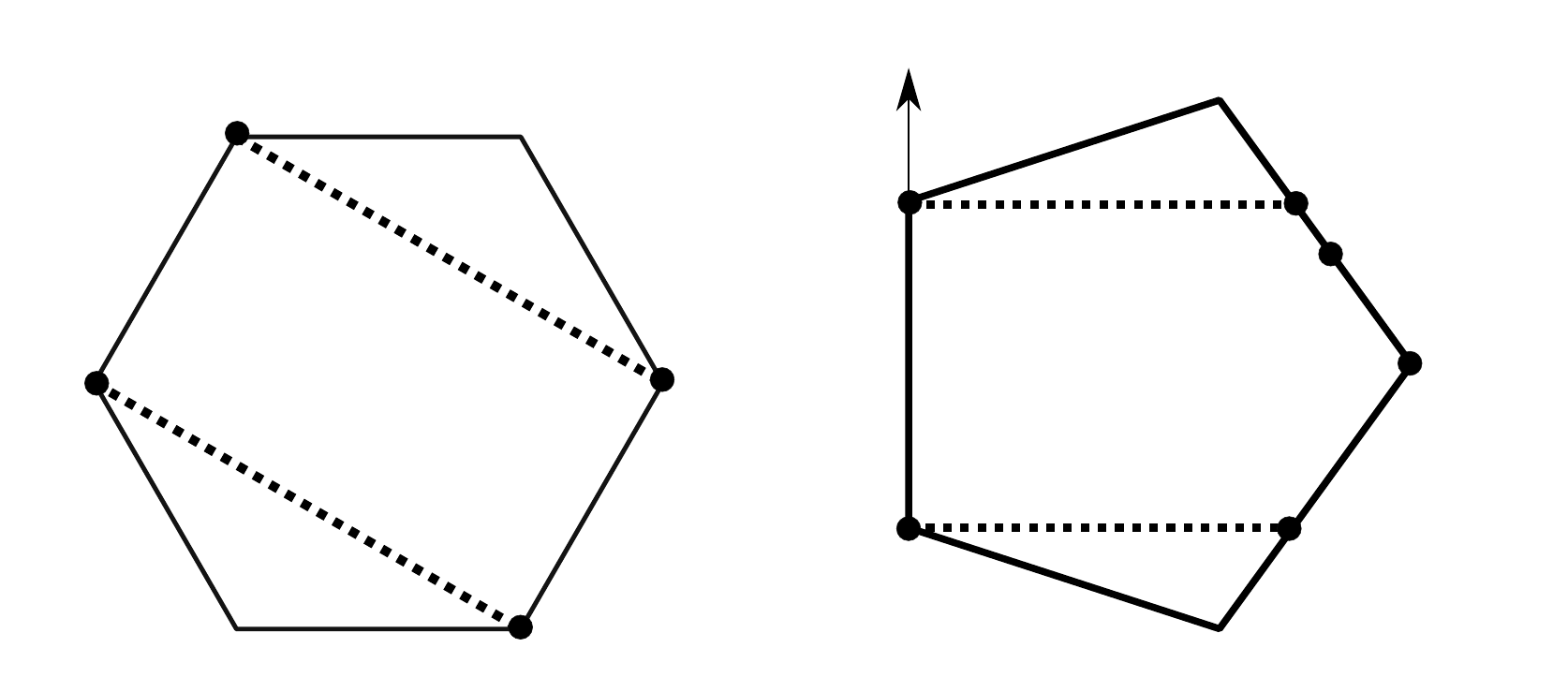
\end{center}
\end{minipage}
\caption{The values of $r_n$ for $n=5,6$ are given by the lengths of the dotted lines shown above. The labels correspond to the proof of Proposition~\ref{prop:connected} when $n$ is even (left) and when $n$ is odd (right). 
}
\label{fig:polygon-rn}
\end{figure}

\begin{proof}
By Lemma~\ref{lem:conn}, we have that
\[ r_n = \sup\{r > 0 : B(p,r')\cap P_n\mbox{ is connected for all }r'<r\mbox{ and }p\in P_n\}.\]
We first consider the case when $n$ is even (Figure~\ref{fig:polygon-rn} (left)). If $v_0$ is a vertex of $P_n$, then $B(v_0,r)\cap P_n$ is connected for all $r$. Otherwise, let $p\in (v_0,v_1)_{P_n}$ be a point between two adjacent vertices $v_0$ and $v_1$ in $P_n$. Let $(v_i,v_{i+1})_{P_n}$ be the edge opposite $(v_0,v_1)_{P_n}$, and let $\ell$ be the line between $v_i$ and $v_{i+1}$. Note that $\|p - \proj_\ell(p)\|=2\cos(\pi/n)$. It is not hard to check that $B(p,r)\cap P_n$ is connected for any $0<r\le2\cos(\pi/n)$, and that $B(p,2\cos(\pi/n)+\varepsilon)\cap P_n$ is not connected for any sufficiently small $\varepsilon>0$. Hence $r_n=2\cos(\pi/n)$. 

Now let $n$ be odd (Figure~\ref{fig:polygon-rn} (right)). Let $v_0$ and $v_1$ be two adjacent vertices in $P_n$, and let one of the two edges opposite $(v_0,v_1)_{P_n}$ be $(v_i,v_{i+1})_{P_n}$. Let $\ell$ be the line between $v_i$ and $v_{i+1}$. Let $q$ denote the unique point on $[v_0,v_1)_{P_n}$ such that $\proj_\ell(q) = v_i$. Let $m$ be the midpoint of $v_0$ and $v_1$. It is not hard to check that for $p\in[q,v_1]_{P_n}$ all balls $B(p,r)\cap P_n$ are connected for any $r\le\norm{q - v_i}$. Furthermore, for $p\in(m,q)_{P_n}$ we have that $B(p,r)\cap P_n$ is connected for any $0<r\le\norm{p - \proj_\ell(p)}$, that $B(p,\norm{p - \proj_\ell(p)}+\varepsilon)\cap P_n$ is not connected for any sufficiently small $\varepsilon>0$, and that $\norm{p - \proj_\ell(p)}>\norm{q-v_i}$. It follows that $r_n=\norm{q - v_i} = 1+\tfrac{\cos(2\pi/n)}{\cos(\pi/n)}$.

The fact that $\vr{P_n}{r}$ is a closed and continuous metric cyclic graph for all $r<r_n$ follows from Lemma~\ref{lem:metric-closed-cont}.
\end{proof}

\begin{remark}\label{rem:rn}
We note that $r_3=0$, i.e., that $\vr{P_3}{r}$ is not a cyclic graph for any $r>0$. For $n\ge 4$, we have that $\vr{P_n}{r}$ is a cyclic graph for all $r<r_n$, that $\vrless{P_n}{r_n}$ is a cyclic graph, and that $\vrleq{P_n}{r_n}$ is a cyclic graph if and only if $n$ is odd. In this paper we typically restrict to $r<r_n$ for the sake of simplicity. 
\end{remark}

\begin{remark}
Note that as $n \to \infty$ we have $r_n \to 2$, which is the diameter of the circle (with the Euclidean metric) in which the polygons are inscribed.  
\end{remark}

\subsection{Shape of the side length function}\label{ss:side-length}

Recall that Lemma~\ref{lem:star-unique} established uniqueness but not existence for stars inscribed at a particular point in a metric cyclic graph. Here we give sufficient conditions for existence in the case of $P_n$, which will guarantee that the side length function (Definition~\ref{def:s-fns}) is well-defined on all of $P_n$.

As a technical note, by Remark~\ref{rem:rn} the map $g_{r}\colon P_n\to P_n$ is easiest to define for $r \in [0,r_n)$. Nevertheless, by continuity we may extend our domain to $[0,r_n]$ by assigning $g_{r_n}(p) = \lim_{r\to r_n}g_r(p)$.

The next lemma shows that the dynamics at a vertex of $P_n$ is slower than at any other point. Recall that $\dpn$ gives the counterclockwise geodesic distance along $P_n$.

\begin{lemma}\label{lem:min-winding}
Let $0<r<r_n$, let $v \in P_n$ be a vertex, and let $p \in P_n$ be an arbitrary point. Then we have $\dpn(v,g_{r}(v)) \le \dpn(p,g_{r}(p))$.
\end{lemma}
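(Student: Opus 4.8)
The plan is to recast the statement as an extremal property of chords of a fixed arc length. For a normalized arc-length offset $\delta\in[0,1)$ and a point $p\in P_n$, let $p_\delta$ denote the point of $P_n$ lying at counterclockwise geodesic distance $\delta$ from $p$ (so $\dpn(p,p_\delta)=\delta$), and set $C_\delta(p):=\|p-p_\delta\|$. I will prove the equivalent claim that, for each fixed $\delta$, the chord length $C_\delta$ is maximized over $P_n$ exactly when $p$ is a vertex. Granting this, take $\delta=\dpn(v,g_r(v))$; since $g_r(v)$ is by definition the counterclockwise point at Euclidean distance $r$ from $v$, we have $v_\delta=g_r(v)$ and hence $C_\delta(v)=r$, so maximality gives $C_\delta(p)\le r=\|p-g_r(p)\|$ for every $p$. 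On the other hand, as one moves counterclockwise from $p$ the Euclidean distance $d_{P_n}$-chord from $p$ increases strictly monotonically throughout the range of distances below $r_n$; this is the strict monotonicity of $d_{V,u}$ recorded after Definition~\ref{def:gv} together with the connectivity of the balls $B(p,r')\cap P_n$ for $r'<r_n$ furnished by Proposition~\ref{prop:connected}. Because $C_\delta(p)\le r<r_n$ while $\|p-g_r(p)\|=r$, this monotonicity forces $p_\delta$ to lie weakly counterclockwise-before $g_r(p)$, that is $\delta\le\dpn(p,g_r(p))$, which is the lemma.

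To see that $C_\delta$ is maximized at a vertex, parametrize $P_n$ by Euclidean arc length and write $g(x)=\|q(x)-p(x)\|^2$, where $p(x)$ runs counterclockwise at unit speed and $q(x)$ is the point a fixed arc length ahead. On any maximal interval of $x$ for which both $p(x)$ and $q(x)$ lie in the open interiors of edges, the unit tangents $T_p$ and $T_q$ are constant, so $p'=T_p$, $q'=T_q$, and
\[ g''(x)=2(q'-p')\cdot(q'-p')=2\|T_q-T_p\|^2\ge 0. \]
Thus $g$ is continuous on $P_n$ and convex on each such interval, and as $p$ traverses one edge there are only finitely many breakpoints (the parameters where $p(x)$ or $q(x)$ hits a vertex). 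A continuous function that is convex on each piece of a finite partition attains its maximum at a partition point, so $\max_x g(x)$ is attained at a breakpoint.

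It remains to check that every breakpoint value equals the value of a configuration in which $p$ is a vertex. By the $n$-fold rotational symmetry of $P_n$, all configurations with $p$ a vertex give the same chord length $M:=C_\delta(v)$. For a breakpoint where instead $q$ is a vertex $v_m$, apply the reflection of $P_n$ through the $D_n$-symmetry axis passing through $v_m$: this isometry fixes $v_m$, reverses orientation, and therefore carries the clockwise arc of length $\delta$ ending at $p$ to the counterclockwise arc of length $\delta$ beginning at $v_m$, mapping the chord $\{p,v_m\}$ isometrically to $\{v_m,(v_m)_\delta\}$. Hence this breakpoint value is again $M$, so $\max_x g(x)=M=C_\delta(v)^2$ and $C_\delta$ is maximized at the vertex, completing the reduction.

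The main obstacle is organizational rather than conceptual: one must confirm that the piecewise-convex and breakpoint analysis is exhaustive as $p$ slides over a full edge (the arc to $q$ crosses a controlled set of vertices), and that no chord in play exceeds $r_n$, so that the strict-monotonicity step applies without the curve wrapping past the connectivity threshold. Both points are automatic once one observes that the maximal chord value equals $r<r_n$, which keeps every configuration inside the regime where Proposition~\ref{prop:connected} guarantees connected balls and strictly increasing distance along the arc.
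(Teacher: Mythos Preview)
Your proof is correct and takes a genuinely different route from the paper's. The paper fixes the vertex $v_0$, restricts by symmetry to $p$ on the edge incident to $v_0$, writes $\dpn(p,g_r(p))$ as an affine function of two parameters $x=\|p-v_0\|$, $y=\|g_r(p)-v_1\|$, and minimizes $x+y$ subject to a law-of-cosines constraint via Lagrange multipliers, treating the even and odd $n$ cases separately. Your approach inverts the roles of the two quantities: rather than minimizing arc length at fixed Euclidean distance, you maximize the Euclidean chord $C_\delta$ at fixed arc length $\delta$, then recover the lemma from the strict monotonicity of $d_{V,p}$ below $r_n$. The key computation $g''(x)=2\|T_q-T_p\|^2\ge 0$ gives piecewise convexity of the squared chord, forcing the maximum to a breakpoint, and the $D_n$ reflection through a vertex equates ``$q$ is a vertex'' breakpoints with ``$p$ is a vertex'' breakpoints. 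This is cleaner in that it avoids the parity split and the explicit constrained optimization; it also makes transparent that the result only uses convexity of the edges and the dihedral symmetry. The paper's argument, on the other hand, identifies the interior critical point $x=y$ explicitly and would localize immediately to a neighborhood of a single vertex, which is convenient for the quantitative bounds used later in Lemma~\ref{lem:existence}. One small point worth tightening: your monotonicity step appeals to the strict increase of $d_{V,p}$ on $[p,f_1(p)]_V$, which is stated for a fixed metric cyclic graph; to cover all $\alpha\le\dpn(p,g_r(p))$ you should take the cyclic graph $\vr{P_n}{r'}$ for some $r<r'<r_n$ so that $p_\delta$ lies inside $[p,f_1(p)]_{P_n}$, but this is routine given Proposition~\ref{prop:connected}.
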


\begin{proof}
The result is clear when $r\le \frac{1}{n}$, i.e.\ when $v$ and $g_r(v)$ lie on the same edge of $P_n$. Hence we may assume $r>\frac{1}{n}$.

Let $v_0$ be a vertex of $P_n$ and set $u_1 = g_{r}(v_0)$. Let $v_1$ be the unique vertex of $P_n$ that is in $(v_0,u_1)_{P_n}$ and is adjacent to $u_1$, and also set $u_0 = g_{r}^{-1}(v_1)$ (see Figure~\ref{fig:min-winding}.) Without loss of generality (by symmetry), let $p\in (u_0,v_0)_{P_n}$, and define $q=g_{r}(p)$. Note that $p$ and $q$ lie on edges of $P_n$ which are not parallel, so the edges intersect at a point $c$ with incident angle $\theta$. We then define $x = \|p-v_0\|$ and $y = \|q-v_1\|$. With this notation, we can write
\[ \dpn(p,q) = \begin{cases}
\frac{1}{n}\bigg(\frac{n}{2} - 2 + x + y \bigg) &\mbox{ if $n$ is even} \\
\frac{1}{n}\bigg(\frac{n+1}{2} - 2 + x + y \bigg) &\mbox{ if $n$ is odd.}
\end{cases} \]

\begin{figure}
\begin{minipage}{0.2\textwidth}
\def\svgwidth{0.7\linewidth}
\begingroup%
  \makeatletter%
  \providecommand\color[2][]{%
    \errmessage{(Inkscape) Color is used for the text in Inkscape, but the package 'color.sty' is not loaded}%
    \renewcommand\color[2][]{}%
  }%
  \providecommand\transparent[1]{%
    \errmessage{(Inkscape) Transparency is used (non-zero) for the text in Inkscape, but the package 'transparent.sty' is not loaded}%
    \renewcommand\transparent[1]{}%
  }%
  \providecommand\rotatebox[2]{#2}%
  \newcommand*\fsize{\dimexpr\f@size pt\relax}%
  \newcommand*\lineheight[1]{\fontsize{\fsize}{#1\fsize}\selectfont}%
  \ifx\svgwidth\undefined%
    \setlength{\unitlength}{383.97598267bp}%
    \ifx\svgscale\undefined%
      \relax%
    \else%
      \setlength{\unitlength}{\unitlength * \real{\svgscale}}%
    \fi%
  \else%
    \setlength{\unitlength}{\svgwidth}%
  \fi%
  \global\let\svgwidth\undefined%
  \global\let\svgscale\undefined%
  \makeatother%
  \begin{picture}(1,1.41767007)%
    \lineheight{1}%
    \setlength\tabcolsep{0pt}%
    \put(0,0){\includegraphics[width=\unitlength,page=1]{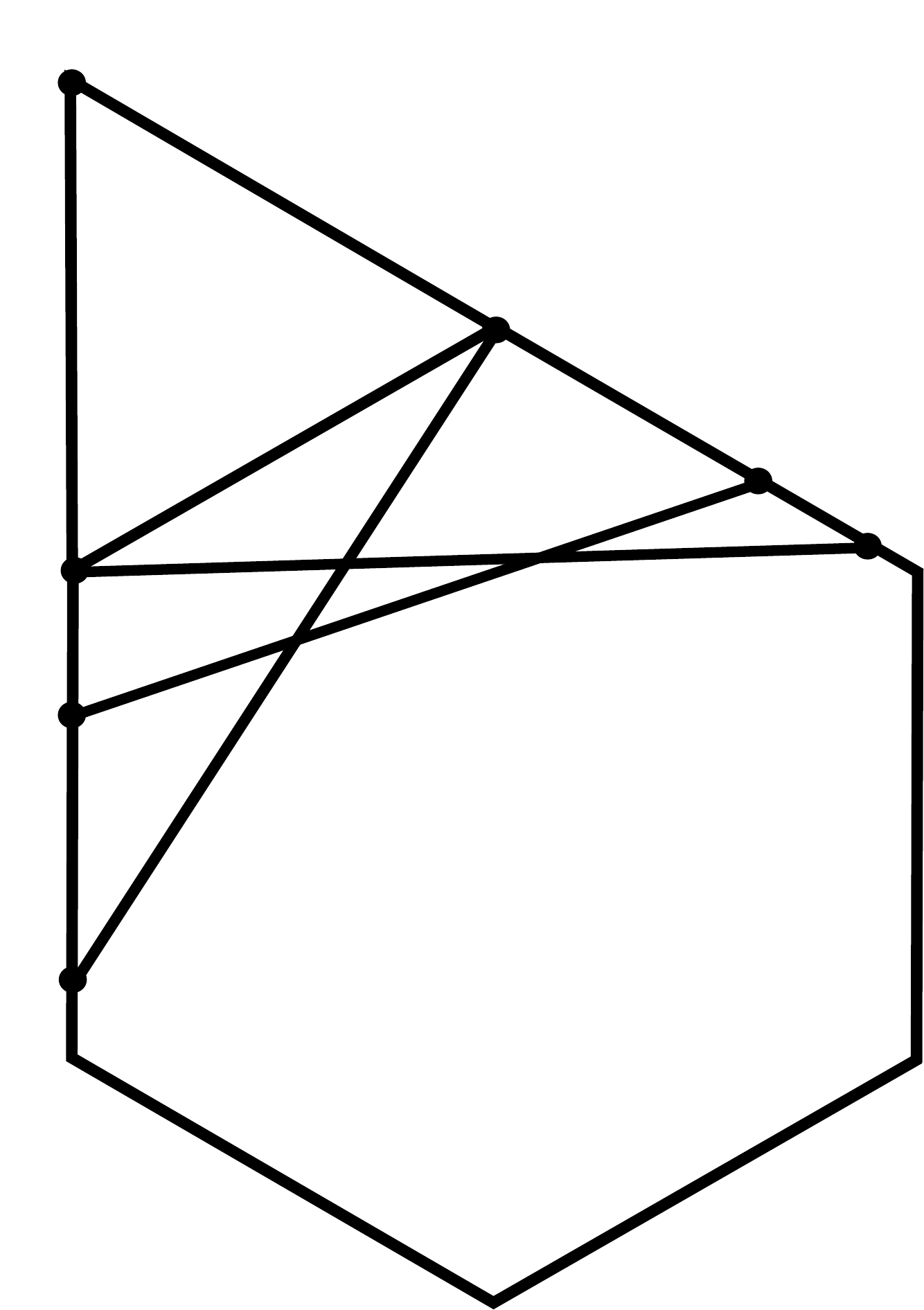}}%
    \put(0.05998365,1.3973212){\color[rgb]{0,0,0}\makebox(0,0)[lt]{\lineheight{1.25}\smash{\begin{tabular}[t]{l}$c$\end{tabular}}}}%
    \put(0.55319486,1.1045385){\color[rgb]{0,0,0}\makebox(0,0)[lt]{\lineheight{1.25}\smash{\begin{tabular}[t]{l}$v_0$\end{tabular}}}}%
    \put(-0.10093084,0.8097907){\color[rgb]{0,0,0}\makebox(0,0)[lt]{\lineheight{1.25}\smash{\begin{tabular}[t]{l}$v_1$\end{tabular}}}}%
    \put(-0.08282405,0.64080199){\color[rgb]{0,0,0}\makebox(0,0)[lt]{\lineheight{1.25}\smash{\begin{tabular}[t]{l}$q$\end{tabular}}}}%
    \put(0.84204759,0.93751476){\color[rgb]{0,0,0}\makebox(0,0)[lt]{\lineheight{1.25}\smash{\begin{tabular}[t]{l}$p$\end{tabular}}}}%
    \put(0.95601673,0.86284525){\color[rgb]{0,0,0}\makebox(0,0)[lt]{\lineheight{1.25}\smash{\begin{tabular}[t]{l}$u_0$\end{tabular}}}}%
    \put(-0.10093084,0.36177424){\color[rgb]{0,0,0}\makebox(0,0)[lt]{\lineheight{1.25}\smash{\begin{tabular}[t]{l}$u_1$\end{tabular}}}}%
  \end{picture}%
\endgroup%

\end{minipage}
\begin{minipage}{0.2\textwidth}
\def\svgwidth{\linewidth}
    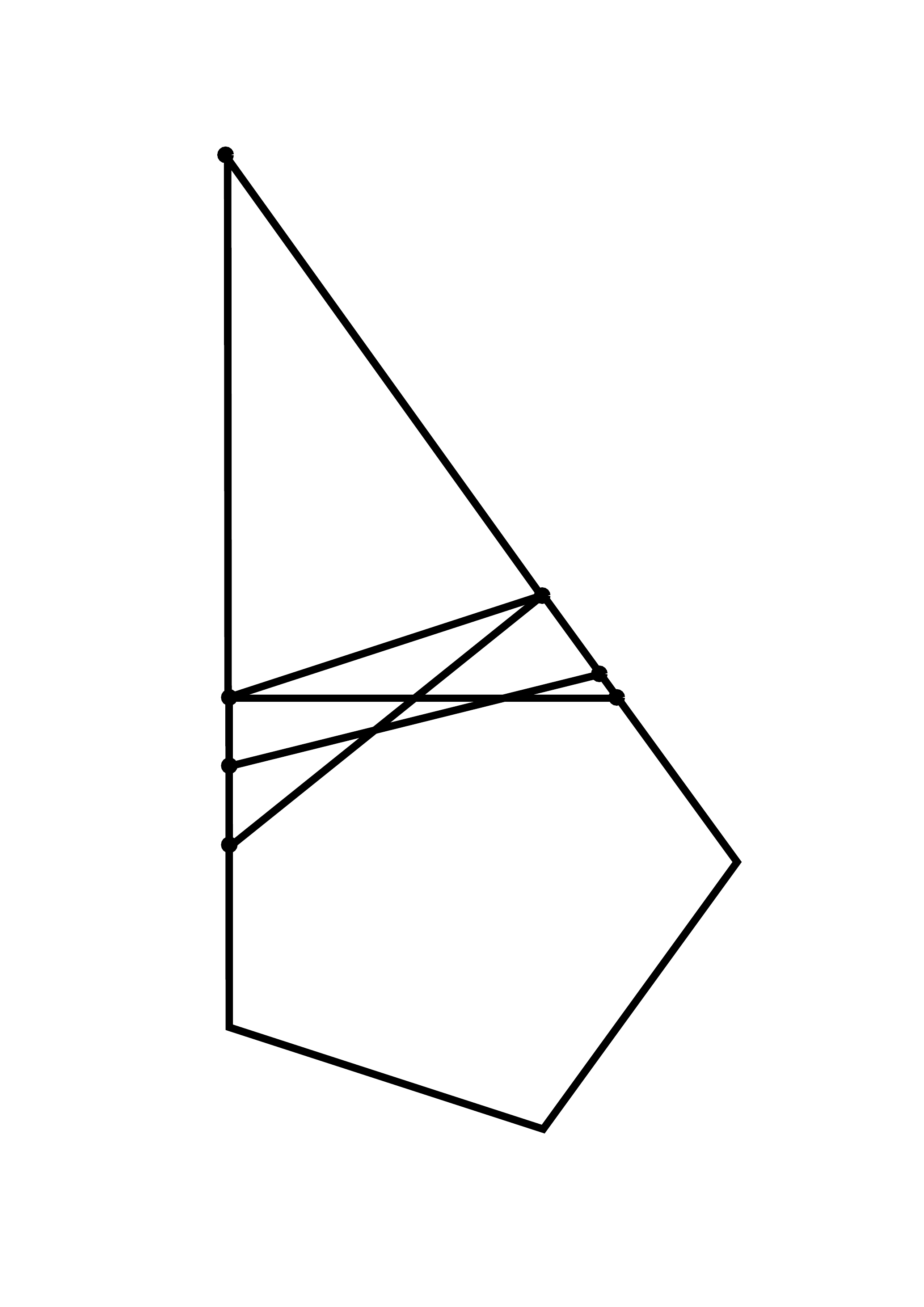
\end{minipage}
\hspace{1cm}
\begin{minipage}{0.2\textwidth}
\begin{tikzpicture}
	\newdimen\rad
	\rad=1.5cm
	\draw (0:\rad) \foreach \x in {72,144,...,359} {
		-- (\x:\rad)
	} -- cycle (90:\rad) ;
	\draw[<->] (0,-2) --(0,2);
	\draw[<->] (-2,0) --(2,0);
	\draw (-1.2,0.8795) --(0.865,0.8795);
	\filldraw (0.865,0.8795) circle (1pt);
	\filldraw (-1.2,0.8795) circle (1pt);
	\node at (1,1.1) {$u$};
	\node at (-1.3,1.1) {$v$};
\end{tikzpicture}
\end{minipage}
\caption{Visualization of the proof of Lemma~\ref{lem:min-winding} in the case that $n$ is odd (left) and $n$ is even (center). Picture for the proof of Lemma~\ref{lem:existence} when $n$ is odd (right).}
\label{fig:min-winding}
\end{figure}

We can prove the desired inequality by optimizing the function $f(x,y)\colon[0,r]^2\to\R$ defined by $f(x,y)=x+y$ subject to the law of cosines constraint $g(x,y) = x^2+y^2-2xy\cos\theta - r^2 = 0$. Using Lagrange multipliers, we compute $\nabla f = (1,1)$ and $\nabla g = (2x-2y\cos\theta, 2y - 2x\cos\theta)$, and note that the system $\nabla f = \lambda \nabla g$ is satisfied when $x = y$ or $\theta = \pi$. Since the latter case is impossible, we conclude that the only interior point extremum of $f$ is when $x = y$. Plugging this into the contraint $g(x_0,x_0) = 0$, we get $x_0 = r/\sqrt{2(1-\cos\theta)}$, so $f(x_0,x_0) = r\sqrt{2/(1-\cos\theta)}$. We compare this to the boundary value $r = f(0,r) < f(x_0,x_0)$ to conclude that $(x_0,x_0)$ is a global maximum of $f$, and that the minimum value of $f$ subject to the constraint $g$ is achieved at the boundary point $(x,y)=(0,r)$. It follows that $\dpn(v_1,g_{r}(v_1)) \le \dpn(p,g_{r}(p))$ holds for all $p \in P_n$.
\end{proof}

We use the above to prove the following existence result. Later, this will be used to deduce the winding fractions of $\vr{P_n}{r}$ as $r$ varies. 

\begin{lemma}\label{lem:existence}
For all $n\ge 4l+2$ except\footnote{When $n=6$ and $l=1$, we have $t_{P_n,l}=r_n$.} for $(n,l)=(6,1)$, there exists an inscribed equilateral $(2l+1)$-pointed star of radius $r< r_n$ at every basepoint $p \in P_n$.
\end{lemma}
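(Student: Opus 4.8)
The plan is to produce the star as a solution of a one-parameter equation in the scale and to solve it by an intermediate value argument. Recall from Definition~\ref{def:gv} that for $r\in[0,r_n]$ the map $g_r\colon P_n\to P_n$ advances a point counterclockwise through Euclidean distance exactly $r$, so an inscribed equilateral $(2l+1)$-star of radius $r$ based at $p$ is precisely a solution of $(g_r)^{2l+1}(p)=p$ whose orbit has winding number $l$. Encoding the winding, I would introduce the total counterclockwise displacement
\[ \Phi_r(p):=\sum_{i=0}^{2l}\dpn\bigl((g_r)^i(p),(g_r)^{i+1}(p)\bigr), \]
so that such a star of radius $r$ exists at $p$ if and only if $\Phi_r(p)=l$ (uniqueness being already guaranteed by Lemma~\ref{lem:star-unique}). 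Thus the lemma amounts to solving $\Phi_r(p)=l$ with $r<r_n$, for every basepoint $p$.

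First I would establish the two soft properties of $r\mapsto\Phi_r(p)$. Joint continuity of $(p,r)\mapsto g_r(p)$ is Lemma~\ref{lem:G}, so each iterate $(g_r)^i(p)$, and hence $\Phi_r(p)$, is continuous in $r$; and $\Phi_0(p)=0$. For strict monotonicity I would argue by induction on $i$ that $(g_r)^i(p)$ moves strictly counterclockwise as $r$ increases (a larger step pushes a point strictly forward, and $g_r$ preserves the cyclic order), so that the lifted endpoint $(g_r)^{2l+1}(p)$, and therefore $\Phi_r(p)$, is strictly increasing in $r$. Combining these, $\Phi_r(p)=l$ has a solution with $r<r_n$ if and only if $\Phi_{r_n}(p)>l$; by the intermediate value theorem it then suffices to verify this single strict inequality at the connectivity scale, for every $p$.

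Next I would make the inequality uniform in $p$ by passing to the slowest point. By Lemma~\ref{lem:min-winding}, at the fixed scale $r_n$ the counterclockwise step from a vertex is minimal, i.e.\ $\mu:=\dpn(v,g_{r_n}(v))\le\dpn(q,g_{r_n}(q))$ for every $q\in P_n$ and every vertex $v$ (all vertices being equivalent by symmetry). Applying this bound to each of the $2l+1$ summands of $\Phi_{r_n}(p)$ gives $\Phi_{r_n}(p)\ge(2l+1)\mu$ for all $p$, so it is enough to prove the single scalar inequality $\mu>\tfrac{l}{2l+1}$, i.e.\ that the vertex advances more than the fraction $\tfrac{l}{2l+1}$ of the way around at the connectivity scale.

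Finally I would compute $\mu$ from Proposition~\ref{prop:connected}, splitting on the parity of $n$. When $n$ is even, $r_n=2\cos(\pi/n)$ is exactly the perpendicular distance from $v=\omega^0$ to the opposite edge, with foot at the vertex $\omega^{n/2-1}$; hence $g_{r_n}(v)=\omega^{n/2-1}$, $\mu=(n-2)/(2n)$, and $\mu>\tfrac{l}{2l+1}$ holds precisely when $n>4l+2$. The even boundary $n=4l+2$ gives equality $\mu=\tfrac{l}{2l+1}$, forcing the star to radius exactly $r_n$; this is the degeneracy recorded in the footnote and, for $l=1$, is the excluded pair $(6,1)$. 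When $n$ is odd, $r_n=1+\cos(2\pi/n)/\cos(\pi/n)$ is smaller than the distance to the farthest vertex $\omega^{(n-1)/2}$, so $g_{r_n}(v)$ lands in the interior of the edge $(\omega^{(n-3)/2},\omega^{(n-1)/2})$; here I would solve the law-of-cosines equation for the exact position of $g_{r_n}(v)$ along that edge and verify $\mu>\tfrac{l}{2l+1}$ for all odd $n\ge 4l+3$. I expect this odd-$n$ estimate to be the main obstacle: the crude bound $\mu>(n-3)/(2n)$ settles only the range $n\ge 6l+3$, so for $4l+2\le n<6l+3$ one must pin down how far into the opposite edge the vertex step reaches and show this is enough to clear the threshold $\tfrac{l}{2l+1}$.
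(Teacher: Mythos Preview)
Your plan is the paper's plan: the same accumulated-winding function (the paper writes it as $f_p(r)$), the same intermediate-value argument from $f_p(0)=0$ to $f_p(r_n)>l$, the same reduction via Lemma~\ref{lem:min-winding} to the vertex step $\mu$, and the same parity split. Two remarks on the gaps you leave open.

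For odd $n$ the paper does not solve the law of cosines exactly. It observes that the step from (equivalently, to) a vertex at scale $r_n$ covers $\tfrac{n+1}{2}-2$ full edges plus a fraction $1-\tfrac12\sec(\pi/n)$ of one more, and then invokes the elementary inequality $\cos(\pi/n)>\tfrac{n-1}{n+1}$ (Lemma~\ref{lem:technical-inequality}) to get $\mu>\tfrac12-\tfrac{1}{n-1}$. This single estimate gives $(2l+1)\mu>l$ for every odd $n\ge 4l+3$ at once, which is exactly the range where your crude bound $(n-3)/(2n)$ falls short.

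For the even boundary $n=4l+2$, where your $\mu$-bound degenerates to equality at vertex basepoints, the paper abandons the IVT route and writes the star down explicitly: for $p=(1-t)+t\omega$ the orbit $\{p,p\omega^{2l},\ldots,p\omega^{2l\cdot 2l}\}$ is equilateral of side $\|1-\omega^{2l}\|\cdot\|p\|$, and this is bounded by $r_n$ directly. Your strict-monotonicity argument is correct but unnecessary: the strict inequality $f_p(r_n)>l$ together with continuity already forces the IVT solution into $(0,r_n)$.
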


\begin{proof}
Fix an integer $n \ge 4l+2$ and a point $p \in P_n$, and define the function $f_p\colon[0,r_n]\to \R$ via

\[ f_p(r) = \sum_{i = 0}^{2l}\dpn(g_r^i(p),g_r^{i+1}(p)). \]

Note that $f_p$ is continuous since $g_r$ is continuous as a function of $r$, and since $\dpn$ is continuous. Also note that $f_p(0) = 0$. We first consider the case that $n$ is even and $n>4l+2$. For any vertex $v \in P_n$ and $u = g_{r_n}^{-1}(v)$, the arc $[u,v]_{P_n}$ consists of exactly $\frac{n}{2}-1$ edges of $P_n$. So, we get $\dpn(u,v) = \frac{1}{n}(\frac{n}{2}-1) = \frac{1}{2}-\frac{1}{n}$. By Lemma~\ref{lem:min-winding}, we have $\dpn(u,v) \le \dpn(p,g_{r_n}(p))$, which provides the bound
\begin{align*}
	f_p(r_n) &= \sum_{i = 0}^{2l}\dpn(g_{r_n}^i(p),g_{r_n}^{i+1}(p)) \\
	&\ge \sum_{i = 0}^{2l}\bigg(\frac{1}{2}-\frac{1}{n}\bigg) = (2l+1)\bigg(\frac{1}{2}-\frac{1}{n}\bigg) = l + \frac{1}{2} - \frac{2l+1}{n} \\
	&>l\quad\mbox{ if }n>4l+2.
\end{align*}
We have shown that $f_p(r_n) > l$ in all even cases except $n = 4\ell+2$, and therefore the intermediate value theorem guarantees a solution $r\in[0,r_n]$ to the equation $f_p(r) = l$. In turn, this guarantees the existence of the desired star.

If $n$ is odd, then Lemma~\ref{lem:min-winding} again gives $\dpn(a,b) \le \dpn(p,g_{r_n}(p))$ for $v$ any vertex of $P_n$ and $u = g_{r_n}^{-1}(v)$, but now we resort to bounding $\dpn(u,v)$ instead of computing it directly. The arc $[u,v]_{P_n}$ contains $\frac{n+1}{2}-2$ complete edges of $P_n$, and part of one additional edge. To bound the proportion of this edge that is covered by $[u,v]_{P_n}$, let us realize $P_n$ geometrically in the plane, and assume without loss of generality that the line connecting $u$ and $v$ is given by $y = \sin(\frac{2\pi}{n}(\frac{n}{2}-\frac{1}{2})) = \sin(\frac{\pi}{n})$; see Figure~\ref{fig:min-winding}(right). It then follows that the proportion of the edge left uncovered is equal to $\sin(\frac{\pi}{n})/\sin(\frac{2\pi}{n}) = \frac{1}{2}\sec(\frac{\pi}{n})$. By Lemma~\ref{lem:technical-inequality}, we conclude that the proportion of this edge that is covered by $[u,v]_{P_n}$ is $1 - \frac{1}{2}\sec(\frac{\pi}{n}) > 1-\frac{1}{2}(\frac{n+1}{n-1}) = \frac{3}{2} - \frac{n}{n-1}$. Therefore, we have derived the bound $\dpn(u,v) > \frac{1}{n}(\frac{n+1}{2} - 2 + \frac{3}{2} - \frac{n}{n-1}) = \frac{1}{2} - \frac{1}{n-1}$, which lets us compute
\begin{align*}
	f_p(r_n) &= \sum_{i = 0}^{2l}\dpn(g_{r_n}^i(p),g_{r_n}^{i+1}(p)) \\
	&> \sum_{i = 0}^{2l}\bigg(\frac{1}{2}-\frac{1}{n-1}\bigg) = (2l+1)\bigg(\frac{1}{2}-\frac{1}{n-1}\bigg) = l + \frac{1}{2} - \frac{2l+1}{n-1} \\
	&\ge l\quad\mbox{ since }n \ge 4l+3.
\end{align*}
Hence $f_p(r_n) > l$, so once more the intermediate value theorem guarantees a solution $r\in[0,r_n]$ to $f_p(r) = l$, which proves the existence of the desired star.

The only outstanding case is $n = 4l+2$. This case is easily verified, as we can define the coordinates of the desired star explicitly: For $t \in [0,1]$, the point $p = (1-t)+\omega t$ is an arbitrary basepoint, and we construct the path of vertices $\{p, p\omega^{2l},p\omega^{4l},\dots p\omega^{(2l)2l}\}$. Observe that all of the adjacent distances in this path are equal to $r = \|p\omega^{2l j}-p\omega^{2l(j+1)}\| = \norm{1-\omega^{2l}}\cdot\norm{1-t+t\omega}$. As this value does not depend on $j$, the proposed path indeed corresponds to an inscribed equilateral $(2l+1)$-pointed star. But we must also check that this side length satisfies $r \leq r_n$. Observe that $r$ is a product of two factors, and expanding trigonometrically gives that the first is $\norm{1-\omega^{2l}} = 2\sin(4\pi l/n)$ and the second factor is $\norm{1-t+t\omega}^2 = 4\sin^2(\pi/n)t^2 - 4\sin^2(\pi/n)t + 1$. This function is quadratic in $t$, so we can easily check that its maximum value is 1 and that this occurs at $t = 0$ and $t = 1$. Hence, we have derived the bound
\[r \le 2\sin(4\pi l/n) = 2\sin(\pi(n-2)/n) = 2\sin(2\pi/n) \quad\mbox{since}\quad 4l = n - 2.\] 
Finally, Lemma~\ref{lem:sin-cos-inequality} gives that this is bounded above by $2\cos(\pi/n) = r_n$, as desired. This completes the proof.
\end{proof}

This is mind, we recall Definition~\ref{def:s-fns}. Given any positive integer $n \ge 4l+2$ and any point $p \in P_n$, we let $S_{2l+1}(p)$ denote the unique $(2l+1)$-pointed star inscribed in $P_n$ containing the vertex $p$. Furthermore, we let $s_{2l+1}(p)$ be the side length of this star (Figure~\ref{fig:P9_s_t}). For the remainder of this section, we will assume $n \ge 4l+2$. By Lemma~\ref{lem:s-cts}, $s_{2l+1}$ is a continuous function on a compact domain $P_n$, so we know it must achieve its extremal values. We thus turn to the task of finding these maxima and minima.

\begin{definition}
Say that a point $x \in P_n$ is a \emph{vertex crossing} if $S_{2l+1}(x)$ contains a vertex of $P_n$. Likewise, say that $x \in P_n$ is a \emph{midpoint crossing} if $S_{2l+1}(x)$ contains the midpoint of some edge of $P_n$. In either case, we call the point $x$ a \emph{crossing}. Moreover, with respect to a fixed star $S$ inscribed in $P_n$, we call a vertex of $S$ a \emph{vertex coincidence} if it also a vertex of $P_n$, or a \emph{midpoint coincidence} if it is also the midpoint of an edge of $P_n$. In either case, we call such a point a \emph{coincidence}. For a given a star, its \emph{coincidence number} is the total number of coincidences in its vertices.
\end{definition}

\begin{remark}\label{rem:expl-sl}
When $2l+1$ divides $n$, the proof of Lemma~\ref{lem:existence} shows that the barycentric coordinate of the points in an inscribed equilateral star are the same. This immediately implies that the only vertex crossings are vertices, that the only midpoint crossings are midpoints, and that we have an analytical formula for the side length function in terms for $t$: For $p(t) = (1-t)\omega +t\omega$ an arbitrary basepoint, we have
\[ s_{2l+1}(p(t)) = 2\sin\left(\frac{\pi l}{2l+1}\right)\sqrt{4\sin^2\left(\frac{\pi}{n}\right)t^2-4\sin^2\left(\frac{\pi}{n}\right)t+1}.\]

Recall from Definition~\ref{def:s,t} that $s_{P_n,l}$ and $t_{P_n,l}$ (henceforth denoted by $s_{n,l}$ and $t_{n,l}$ for brevity) are respectively equal to the global minimum and global maximum of $s_{2l+1}$ on $P_n$. In the case that $2l+1$ divides $n$, we can therefore derive an analytical formulae for these values. The radicand of the side length function is quadratic in $t$, so its minimum occurs at $t=1/2$ and gives $s_{n,l} = 2\sin(\frac{\pi l}{2l+1})\cos(\frac{\pi}{n})$.
Likewise, its maximum\footnote{Let $V=S^1$ be the circle of unit radius, equipped with the Euclidean metric. It follows from~\cite{AA-VRS1} (after modifying the metric) that we have $s_{V,l}=t_{V,l}=2\sin(\frac{\pi l}{2l+1})$ for all $l\ge 0$. 
In the case of the regular polygons $P_n$, note that if $(2l+1)\mid n$, then we have $t_{n,l}=t_{V,l}$ for all $l\ge 0$. Furthermore, if we let $n\to\infty$, while restricting attention to those $n$ with $(2l+1)\mid n$, then we get that $s_{n,l}\to s_{V,l}$ for all $l\ge 0$. This makes sense since $P_n$ converges to $S^1$ (for example in the Hausdorff distance) as $n\to\infty$.} occurs at $t=0$ and $t=1$ and gives $t_{n,l} = 2\sin(\frac{\pi l}{2l+1})$. 
\end{remark}

The following result shows that vertex coincidences and midpoint coincidences are disjointly supported.

\begin{lemma}\label{lem:vertex-midpt-impossible}
A star inscribed in $P_n$ cannot have both a vertex coincidence and a midpoint coincidence. Equivalently, a point in $P_n$ cannot be both a vertex crossing and a midpoint crossing.
\end{lemma}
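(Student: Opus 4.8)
The plan is to exploit the reflective symmetry of $P_n$ together with the uniqueness of inscribed stars (Lemma~\ref{lem:star-unique}). Suppose, for contradiction, that a single inscribed $(2l+1)$-star $S$, written $u_0 \to u_1 \to \cdots \to u_{2l} \to u_{2l+1} = u_0$ with indices in $\Z/(2l+1)$, has a vertex coincidence $u_0 = V$ and a midpoint coincidence $u_c = M$ for some $c$. Since every vertex and every edge-midpoint of $P_n$ lies on one of the $n$ reflection axes of the dihedral symmetry group of $P_n$, there is a reflective symmetry $\sigma_V$ of $P_n$ whose axis passes through $V$, and a reflective symmetry $\sigma_M$ of $P_n$ whose axis passes through $M$.

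First I would show that each reflection preserves $S$ and compute its action on the indices. As $\sigma_V$ is an isometry of $P_n$ fixing $V$, it carries $S$ to an equilateral path of the same side length $r$ based at $V$; because $\sigma_V$ reverses orientation, reading this image in reverse order produces a genuine counterclockwise $(2l+1)$-star of winding number $l$ through $V$ --- each reversed step $\sigma_V(u_{i+1}) \to \sigma_V(u_i)$ is a directed edge of $\vr{P_n}{r}$ of length $r$, and reversing a winding-$(-l)$ traversal restores winding $l$. By the uniqueness of the inscribed star through $V$ (Lemma~\ref{lem:star-unique}) this image equals $S$, and matching the two traversals at the common basepoint $u_0 = V$ forces the index action $\sigma_V(u_i) = u_{-i}$. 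The identical argument applied at $M = u_c$ gives $\sigma_M(S) = S$ with $\sigma_M(u_i) = u_{2c - i}$.

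Next, consider $\rho := \sigma_M \circ \sigma_V$. As a product of two reflections in the symmetry group of $P_n$, it is an orientation-preserving symmetry, hence a rotation by a multiple of $2\pi/n$, and in particular $\rho$ maps vertices of $P_n$ to vertices of $P_n$. On indices it acts as $u_i \mapsto u_{i + 2c}$, so the orbit of the vertex $u_0 = V$ under $\langle \rho\rangle$ is $\{u_x : x \in \langle 2c\rangle\}$ and consists entirely of vertices of $P_n$, where $\langle 2c\rangle$ denotes the subgroup of $\Z/(2l+1)$ generated by $2c$. The concluding observation is arithmetic: since $2l+1$ is odd, $2$ is invertible modulo $2l+1$, so $\langle 2c\rangle = \langle c\rangle$, which contains $c$. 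Hence $u_c$ lies in the $\rho$-orbit of $u_0$ and must therefore be a vertex of $P_n$, contradicting that $u_c = M$ is a midpoint.

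I expect the main obstacle to be the bookkeeping in the second step: one must check carefully that the reflected configuration really is an admissible $(2l+1)$-star in the sense of the definition, so that Lemma~\ref{lem:star-unique} applies --- that is, that the orientation reversal induced by $\sigma_V$ is exactly compensated by reversing the traversal, restoring winding number $l$ and keeping each step a directed edge of the Vietoris--Rips graph. Once the two reflections are identified with the affine involutions $j \mapsto -j$ and $j \mapsto 2c - j$ of $\Z/(2l+1)$, the remaining orbit argument is uniform in the parity of $n$ and requires no case analysis.
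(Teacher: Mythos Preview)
Your proof is correct. Both your argument and the paper's rest on the same two ingredients---the reflective symmetries of $P_n$ at vertices and midpoints preserve the inscribed star, and $2l+1$ is odd---but they package them differently. The paper, after supposing $v = g_r^k(m)$, considers the orbit $\{m, g_r^k(m), g_r^{2k}(m), \ldots\}$ inside $S$ and asserts (implicitly via the same reflection reasoning you spell out) that this orbit alternates between midpoint and vertex coincidences; since its length divides $2l+1$ and is therefore odd, the alternation forces some element to be both at once, a contradiction. Your route is more explicitly group-theoretic: you identify the reflections $\sigma_V$, $\sigma_M$ with the affine involutions $i\mapsto -i$ and $i\mapsto 2c-i$ on $\Z/(2l+1)$, compose them to a rotation $\rho$ acting by $i \mapsto i+2c$, and then use invertibility of $2$ modulo $2l+1$ to place $u_c$ in the $\rho$-orbit of the vertex $u_0$. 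Your version has the advantage of making the symmetry step fully explicit (the paper's ``it is clear that this orbit alternates'' hides exactly the reflection-preserves-the-star argument you carry out via Lemma~\ref{lem:star-unique}), at the cost of slightly more bookkeeping with orientation and winding number.
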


\begin{proof}
Suppose that a $(2l+1)$-pointed star $S$ of side length $r$ is inscribed in $P_n$ and assume that $S$ contains both a vertex coincidence $v$ and a midpoint coincidence $m$, so that there is some integer $k$ that gives $v = g_r^k(m)$. Then consider the orbit $\{m,g_r^{k}(m),g_r^{2k}(m),\dots\}$ within $S$. It is clear that this orbit alternates between vertex coincidences and midpoint coincidences. Also, its size must divide $2l+1$, and hence is odd. This forces some element to be both a vertex coincidence and a midpoint coincidence, which is impossible. 
\end{proof}

\begin{lemma}
\label{lem:crossing-extrema}
Every crossing $p \in P_n$ is a local extrema of $\sl$. Moreover, all vertex crossings achieve the same value of $s_{2l+1}$, and all midpoint crossings achieve the same value of $s_{2l+1}$.
\end{lemma}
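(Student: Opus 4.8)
The plan is to exploit the fact that the side-length function $s_{2l+1}$ (which is defined on all of $P_n$ by Lemma~\ref{lem:existence}) is invariant under every isometry of $P_n$, and then to upgrade this symmetry to a genuine extremum by a local regularity argument. First I would record the invariance: any isometry $\alpha$ of $P_n$ (a rotation or a reflection in the dihedral group $D_n$) carries an inscribed equilateral $(2l+1)$-star to a configuration of $2l+1$ points with the same consecutive side lengths. Relabelling these points in counterclockwise order — reflections reverse the order, but the condition that vertices which are $l$ apart among the $2l+1$ sorted points are at distance $r$ is symmetric — shows the image is again a $(2l+1)$-star of the same side length. By uniqueness of inscribed stars (Lemma~\ref{lem:star-unique}), $\alpha$ maps $S_{2l+1}(w)$ to $S_{2l+1}(\alpha(w))$, whence $s_{2l+1}\circ\alpha = s_{2l+1}$. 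The ``moreover'' claims are then immediate: if $x$ is a vertex crossing then its star contains a vertex coincidence $v$, and since $x$ and $v$ lie on the common star $S_{2l+1}(x)=S_{2l+1}(v)$ we get $s_{2l+1}(x)=s_{2l+1}(v)$; as all vertices of $P_n$ are related by rotations, $s_{2l+1}$ takes a single common value on them, and likewise a single common value on all edge midpoints.

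Next I would reduce the extremum claim for an arbitrary crossing $x_0$ to the coincidence point $z$ lying on its star. Because $x_0$ and $z$ are both vertices of the single star $S_{2l+1}(x_0)=S_{2l+1}(z)$, on a small arc $U\ni x_0$ we have $s_{2l+1}(x)=s_{2l+1}(\Phi(x))$, where $\Phi(x)$ denotes the vertex of $S_{2l+1}(x)$ nearest $z$. Here $\Phi$ is continuous (by continuity of $s_{2l+1}$ in Lemma~\ref{lem:s-cts} and joint continuity of $(u,r)\mapsto\gv_r(u)$ in Lemma~\ref{lem:G}), and, once $U$ is shorter than the minimal gap between consecutive star vertices, $\Phi$ is injective: if $\Phi(x)=\Phi(x')$ then $S_{2l+1}(x)$ and $S_{2l+1}(x')$ share a vertex, hence coincide by Lemma~\ref{lem:star-unique}, forcing $x,x'$ to be two vertices of one star inside $U$, which is impossible. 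A continuous injection on an interval is a monotone homeomorphism, so $\Phi$ maps $U$ homeomorphically onto a neighborhood of $z$ with $\Phi(x_0)=z$. Transferring the identity $s_{2l+1}=s_{2l+1}\circ\Phi$ along $\Phi$ then shows that $x_0$ is a local extremum of $s_{2l+1}$ if and only if $z$ is. It therefore suffices to prove that every vertex and every edge-midpoint $z$ of $P_n$ is a local extremum of $s_{2l+1}$.

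Finally I would treat the coincidence $z$ itself. Let $\rho_z$ be the reflection of $P_n$ fixing $z$; its axis passes through $z$ and the center, which is a symmetry axis of $P_n$ for both parities of $n$. In arc-length coordinates $\rho_z$ acts as $t\mapsto -t$ about $z$, so the invariance $s_{2l+1}\circ\rho_z=s_{2l+1}$ says exactly that $s_{2l+1}$ is \emph{even} about $z$. On a suitable neighborhood of $z$ no star vertex sits at a corner of $P_n$: for a midpoint $z$ this uses Lemma~\ref{lem:vertex-midpt-impossible}, which forbids the star $S_{2l+1}(z)$ from also having a vertex coincidence, and for a vertex $z$ it uses that corners are isolated and star vertices vary continuously. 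On such a neighborhood $s_{2l+1}$ is real-analytic, being cut out by the law-of-cosines relations among points moving along fixed open edges of $P_n$. Since an analytic function is, on each side of a point, either constant or strictly monotone near that point, combining this with evenness forces a (possibly non-strict) local extremum at $z$.

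I expect the main obstacle to be precisely this last regularity step: the symmetry ``$s_{2l+1}$ is even about $z$'' is by itself insufficient, since an even continuous function need not attain an extremum at its center of symmetry. The argument hinges on ruling out oscillation, which is why the piecewise real-analyticity of $s_{2l+1}$ — obtained by applying the implicit function theorem to the star equations once the edge assignments of all star vertices are fixed, using the strict monotonicity of $d_{V,u}$ recorded in Definition~\ref{def:gv} — must be established with care.
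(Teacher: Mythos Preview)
Your argument is correct and rests on the same symmetry idea the paper invokes: the dihedral group acts on $P_n$, so $s_{2l+1}$ is invariant under reflection through any vertex or edge midpoint, hence even there, and all vertices (resp.\ midpoints) share a common $s_{2l+1}$-value. Where you go beyond the paper is in two genuine additions. First, the paper's sketch treats the crossing $p$ as if it itself enjoyed the reflection symmetry, whereas in fact only the coincidence $z$ on its star does; your transfer map $\Phi$ (the ``nearest-to-$z$'' star vertex) is exactly the missing step that carries the local extremum from $z$ back to an arbitrary crossing $x_0$. Second, you correctly flag that evenness of a continuous function is not by itself enough for an extremum and supply the needed regularity: once the edge assignments of all star vertices are fixed, the defining relations are polynomial, so $s_{2l+1}$ is (piecewise) real-analytic and hence one-sidedly monotone near $z$, which together with evenness forces an extremum. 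The paper's one-paragraph proof asserts the conclusion from symmetry alone and does not address either point. One small refinement worth making explicit in your write-up: when $z$ is a vertex of $P_n$ (and possibly other star vertices are too, per Lemma~\ref{lem:coincidence-num}), analyticity holds only on each one-sided neighborhood $(0,\varepsilon)$ separately; your argument still goes through because each side is the restriction of an analytic function on the extended line, hence monotone up to the corner, and evenness then matches the two sides.
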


\begin{proof}
Let $p \in P_n$ be a crossing and let $q \in P_n$ be an arbitrary point whose Euclidean distance from $p$ is less than $r_n$. Then, $\norm{p-q}$ is monotonic (either increasing or decreasing) as $q$ approaches $p$ from the counterclockwise direction. However, the symmetry of the point $p$ tells us that $\norm{p-q}$ must take on identical values as $q$ approaches $p$ from the counterclockwise direction. Hence $p$ is a local extrema. 

The second part follows by symmetry of $P_n$, and by the existence and uniqueness in Lemmas~\ref{lem:existence} and \ref{lem:star-unique}.
\end{proof}

We have now proven a partial characterization of the extrema of the side length function. The following conjecture is much stronger.

\begin{conjecture}\label{conj:monotonic}
For $n \ge 4l+2$, every midpoint crossing of $P_n$ is a global minimum of $s_{2l+1}$, every vertex crossing is a global maximum of $s_{2l+1}$, the midpoint and vertex crossings are interleaved around $P_n$ (the counterclockwise traversal of all crossings in $P_n$ must alternate between vertex crossings and midpoint crossings), and $s_{2l+1}$ is strictly monotonic between adjacent midpoint and vertex crossings.
\end{conjecture}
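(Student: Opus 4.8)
The plan is to deduce all four assertions from two statements, using only the circle topology of $P_n$ together with the already-proven Lemma~\ref{lem:crossing-extrema}. The two statements are: (I) the crossings are the \emph{only} local extrema of $s_{2l+1}$, equivalently $s_{2l+1}$ is strictly monotonic on each arc strictly between consecutive crossings; and (II) the common value $A$ of $s_{2l+1}$ at vertex crossings strictly exceeds the common value $B$ at midpoint crossings. Granting (I), continuity of $s_{2l+1}$ on the circle $P_n$ forces its local maxima and local minima to \emph{strictly alternate} as one travels counterclockwise. By Lemma~\ref{lem:crossing-extrema} every vertex crossing has value $A$ and every midpoint crossing has value $B$, and by Lemma~\ref{lem:vertex-midpt-impossible} no point is of both types; since $A>B$ by (II), a midpoint crossing can never be a local maximum (otherwise an adjacent local minimum would need a value strictly below $B$, which is impossible as $A,B$ are the only extremal values). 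Hence every local maximum is a vertex crossing and every local minimum a midpoint crossing. This simultaneously yields interleaving (maxima and minima alternate), global extremality (all vertex crossings attain the global max value $A$, all midpoint crossings the global min value $B$), and strict monotonicity between adjacent crossings, which is (I). So the conjecture is \emph{equivalent} to (I) together with (II).

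\textbf{The divisible case and the derivative of the side-length function.} First I would dispatch the case $2l+1\mid n$ directly from Remark~\ref{rem:expl-sl}: on each edge $s_{2l+1}(p(t))=2\sin(\tfrac{\pi l}{2l+1})\sqrt{4\sin^2(\tfrac{\pi}{n})t^2-4\sin^2(\tfrac{\pi}{n})t+1}$, whose radicand is a convex quadratic with strict minimum at $t=\tfrac12$ and strict maxima at $t\in\{0,1\}$; since $\cos(\tfrac{\pi}{n})<1$ this is strictly unimodal with midpoint minima strictly below vertex maxima, giving (I) and (II) at once. For general $n$ there is no such formula, so I would instead analyze $s_{2l+1}$ through the closure equation $(g_r)^{2l+1}(p)=p$ (winding number $l$), which defines $r=s_{2l+1}(p)$ implicitly by Lemmas~\ref{lem:existence} and \ref{lem:star-unique}. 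Writing the star vertices as $v_i=(g_r)^i(p)$ and differentiating this equation along $P_n$ (valid on any arc where no $v_i$ sits at a corner of $P_n$, so each $v_i$ stays on a fixed edge and $g_r$ is smooth), the chain rule together with the fact that $g_r$ is increasing in both its arguments shows that $s_{2l+1}'$ has, up to a positive factor, the sign of $1-M$, where $M=\prod_{i=0}^{2l}\partial_u g_r(v_i)$ is the multiplier of the periodic orbit. A short computation differentiating $\|v_i-v_{i+1}\|=r$ gives $\partial_u g_r(v_i)=\cos\theta_i^{\mathrm{out}}/\cos\theta_{i+1}^{\mathrm{in}}$, the ratio of cosines of the angles the chord $v_iv_{i+1}$ makes with the edges of $P_n$ at its two endpoints, so regrouping the product by vertex yields $M=\prod_{j}(\cos\alpha_j^{\mathrm{out}}/\cos\alpha_j^{\mathrm{in}})$, the product over star vertices of (outgoing chord angle)/(incoming chord angle). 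Note also that $s_{2l+1}$ is real-analytic away from vertex crossings and only fails to be smooth where a star vertex transitions across a corner of $P_n$, consistent with vertex crossings being corner maxima.

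\textbf{Reducing (I) to a nonvanishing statement for $M$.} With the above, (I) is exactly the claim that $M=1$ precisely at the crossings. The implication ``crossing $\Rightarrow M=1$'' is Lemma~\ref{lem:crossing-extrema} reinterpreted: when some $v_j$ lands on a polygon vertex or midpoint, the local reflection symmetry of $P_n$ about that point pairs up the star vertices so that the cosine factors cancel telescopically, giving $M=1$ (i.e.\ $s_{2l+1}'=0$). For (II) I would compare the two extremal stars geometrically: the vertex-aligned star spans the ``outermost'' points of $P_n$ and the midpoint-aligned star the ``innermost'' points, so a radial-spread (convexity) estimate, or continuity from the divisible case, shows $A>B$. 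The remaining implication ``not a crossing $\Rightarrow M\neq1$'' is the crux and the reason the statement is only conjectural. I would attempt it by using the $D_n$-invariance of $s_{2l+1}$ to reduce to $p$ ranging over a single half-edge between a vertex and an adjacent midpoint, and trying to show that $\log M$ is strictly monotone there, i.e.\ that the aggregate chord--edge angle configuration moves monotonically as the star rotates off a symmetric position; the connection between $M\gtrless 1$ and the attracting/repelling (hence slow/fast, cf.\ Definition~\ref{defn:fast-slow}) behaviour of the orbit is a natural auxiliary tool.

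\textbf{The main obstacle.} The hard part is precisely this last implication: controlling the multiplier $M$, a product of $2l+1$ cosine ratios whose individual factors have no closed form for general $n$, and proving it avoids the value $1$ off the symmetric locus, \emph{uniformly} in the rotation parameter and in $n$ and $l$. The cancellation that forces $M=1$ at symmetric configurations does not obviously persist as a one-sided inequality away from them, and without the explicit vertex positions available when $2l+1\mid n$ one cannot simply differentiate a formula and check its sign. This uniform geometric control over $M$ is exactly what resists a complete argument and what the authors report having verified only experimentally.
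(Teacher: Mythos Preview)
Your proposal is an honest and well-organized account of why the statement is a conjecture rather than a theorem, and your reduction to (I) and (II) together with the alternation argument is sound; indeed the paper's own Appendix~B uses exactly this alternation logic (local extrema at crossings, equal values at each type, strict monotonicity between) to deduce the full statement once (I) and (II) are known. Your treatment of the divisible case via the explicit quadratic radicand also matches the paper's Lemma~\ref{lem:monotonic-divide}.

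However, you miss the one nontrivial case the paper \emph{does} settle: $l=1$ for arbitrary $n\ge 6$ (Lemma~\ref{lem:monotonic-1}). The paper's argument there is entirely different from your multiplier approach and is pure classical plane geometry. Given an inscribed equilateral triangle $\triangle PQR$ in $P_n$, one extends the three polygon edges carrying $P,Q,R$ to form an outer triangle $\triangle ABC$, and then considers the equilateral triangle $\triangle TUV$ circumscribed about $\triangle ABC$ and parallel to $\triangle PQR$. Two elementary sine-rule computations give the remarkable identity $RQ\cdot VU = k_{ABC}$, a constant depending only on $\triangle ABC$, so minimizing $RQ$ is equivalent to maximizing $VU$. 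One then shows $VU = 2(IJ)\cos\theta$, where $IJ$ is a side of the outer Napoleon triangle of $\triangle ABC$ and $\theta$ is the angle between $IJ$ and $VU$; hence $RQ$ is a strictly monotone function of $\theta$, minimized exactly when $\triangle PQR$ is parallel to the Napoleon triangle. This yields (I) and (II) directly on each arc between crossings, with no need to control a product of cosine ratios.

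Your multiplier framework $M=\prod_j \cos\alpha_j^{\mathrm{out}}/\cos\alpha_j^{\mathrm{in}}$ is a natural way to attack general $l$, and your diagnosis of the obstacle (ruling out $M=1$ off the symmetric locus) is accurate. But note that it does not recover the $l=1$ result either: for $l=1$ the paper bypasses the multiplier entirely via the $RQ\cdot VU$ identity, which has no obvious analogue for $(2l+1)$-stars with $l\ge 2$. Your suggestion to get (II) ``by continuity from the divisible case'' is not viable as stated, since $n$ is a discrete parameter. Finally, you should be aware that ``$s_{2l+1}$ fails to be smooth only at vertex crossings'' is slightly imprecise: the function is piecewise analytic on arcs where \emph{all} star vertices remain on fixed edges of $P_n$, and these arcs are delimited precisely by the crossings, but the non-smoothness occurs whenever \emph{any} star vertex (not just the basepoint) meets a polygon vertex.
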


We prove two special cases of this conjecture.

\begin{lemma}\label{lem:monotonic-divide}
Conjecture~\ref{conj:monotonic} is true if $(2l+1)\mid n$.
\end{lemma}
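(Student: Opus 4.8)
The plan is to reduce the entire statement to the one-variable formula supplied by Remark~\ref{rem:expl-sl}. Since $(2l+1)\mid n$ (and $n\ge 4l+2$ guarantees via Lemma~\ref{lem:existence} that a star, hence $s_{2l+1}$, is defined at every basepoint), that remark tells us two things: the only vertex crossings are the $n$ vertices of $P_n$, the only midpoint crossings are the $n$ edge midpoints, and along any edge parametrized by $p(t)=(1-t)\omega^j+t\omega^{j+1}$ with $t\in[0,1]$ we have the explicit expression
\[ s_{2l+1}(p(t)) = 2\sin\!\left(\tfrac{\pi l}{2l+1}\right)\sqrt{Q(t)},\qquad Q(t)=4\sin^2\!\left(\tfrac{\pi}{n}\right)t^2-4\sin^2\!\left(\tfrac{\pi}{n}\right)t+1. \]
Because this formula does not depend on the edge index $j$, it suffices to analyze a single edge and then transport the conclusion around $P_n$ by symmetry.

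The interleaving claim (part of Conjecture~\ref{conj:monotonic}) is then immediate: the crossings are exactly the $n$ vertices together with the $n$ edge midpoints, and each edge contains precisely one midpoint strictly between its two endpoint vertices, so the counterclockwise list of all $2n$ crossings alternates between a vertex and a midpoint. For the monotonicity claim, I would analyze the radicand $Q$. Its leading coefficient $4\sin^2(\pi/n)$ is strictly positive for $n\ge 4$, so $Q$ is a strictly convex parabola with vertex at $t=\tfrac12$; hence $Q$ is strictly decreasing on $[0,\tfrac12]$ and strictly increasing on $[\tfrac12,1]$. Since $2\sin(\tfrac{\pi l}{2l+1})>0$ for $l\ge 1$ and $\sqrt{\,\cdot\,}$ is increasing, $s_{2l+1}(p(t))$ inherits the same behavior: strictly decreasing from the vertex at $t=0$ to the midpoint at $t=\tfrac12$, and strictly increasing from the midpoint to the next vertex at $t=1$. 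As the adjacent crossings are precisely a vertex and the midpoint of the same edge (or vice versa), this establishes strict monotonicity between adjacent crossings.

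Finally I would identify the extrema by evaluating the formula: $Q(\tfrac12)=\cos^2(\pi/n)$ is the minimum of $Q$ on $[0,1]$ and $Q(0)=Q(1)=1$ its maximum, so (using $\cos(\pi/n)>0$ for $n\ge 4$) the function $s_{2l+1}$ takes the value $2\sin(\tfrac{\pi l}{2l+1})\cos(\tfrac{\pi}{n})=s_{n,l}$ at every midpoint and the value $2\sin(\tfrac{\pi l}{2l+1})=t_{n,l}$ at every vertex. Since the identical formula governs every edge, these per-edge extremal values are in fact the global minimum and global maximum of $s_{2l+1}$ over all of $P_n$; thus every midpoint crossing is a global minimum and every vertex crossing is a global maximum, completing the four claims. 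I expect no genuine obstacle: once the explicit formula of Remark~\ref{rem:expl-sl} is in hand, the whole argument is an elementary one-variable analysis of a convex quadratic, and the only care required is bookkeeping—confirming $Q(\tfrac12)=\cos^2(\pi/n)$, that the sign factors are positive for $n\ge 4$ and $l\ge 1$, and that the edge-independence of the formula upgrades the per-edge extrema to global extrema.
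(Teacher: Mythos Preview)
Your proposal is correct and follows essentially the same approach as the paper: both invoke the explicit formula from Remark~\ref{rem:expl-sl} and then analyze the quadratic radicand $ct^2-ct+1$ with $c>0$ to conclude that the sole interior extremum on each edge lies at $t=\tfrac12$, with the global identification following by symmetry. Your write-up is in fact more careful about the direction of monotonicity (the paper's proof has a typo, stating ``increasing on $[0,\tfrac12]$'' when it should say decreasing) and spells out the interleaving and global-vs-local bookkeeping more explicitly, but the underlying argument is identical.
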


\begin{proof}
When $(2l+1)\mid n$, remark~\ref{rem:expl-sl} gives an explicit formula for $\sl$ as a function of the barycentric coordinate $t$, so we need only show that it is monotonically increasing on $t \in [0,\frac{1}{2}]$ and monotonically decreasing on $t \in [\frac{1}{2},1]$. This is easy: the radicand is of the form $ct^2-ct+1$ for $c > 0$, so we conclude that its only interior extremum is at $t = \frac{1}{2}$. Since $s_{2l+1}$ is monotonic between adjacent midpoint and vertex crossings, it follows from symmetry that all midpoint crossings are global minima of $s_{2l+1}$, and all vertex crossings are global maxima of $s_{2l+1}$.
\end{proof}

\begin{lemma}\label{lem:monotonic-1}
Conjecture~\ref{conj:monotonic} is true if $l=1$.
\end{lemma}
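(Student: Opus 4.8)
The plan is to exploit the fact that for $l=1$ the inscribed star is an equilateral triangle, whose geometry is governed by a single linear equation; this forces the side-length function $s_3$ to be piecewise quadratic, after which the crossing structure drops out of the lemmas already established. Since the case $3\mid n$ is Lemma~\ref{lem:monotonic-divide}, I would assume $3\nmid n$ (so $n\ge 7$) and combine the two results to cover all $n\ge 4l+2$.

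First I would set up the piecewise-quadratic structure. Realizing $P_n$ with vertices $\omega^j$, $\omega=e^{2\pi i/n}$, writing $\eta=\omega-1$, and recalling that three points are the counterclockwise vertices of an equilateral triangle exactly when $A+\zeta B+\zeta^2 C=0$ with $\zeta=e^{2\pi i/3}$, I call a maximal arc of basepoints on which the three star-vertices remain in the open interiors of three fixed edges a \emph{piece}; these are precisely the arcs delimited by consecutive vertex crossings. On a piece, parametrizing $A$ affinely by $a\in(0,1)$ and writing $B=\omega^{j}(1+b\eta)$, $C=\omega^{k}(1+c\eta)$, the equilateral relation becomes the single complex equation $\zeta\omega^{j}\eta\,b+\zeta^{2}\omega^{k}\eta\,c=-(A+\zeta\omega^{j}+\zeta^{2}\omega^{k})$, i.e.\ two real linear equations in the real unknowns $b,c$ with invertible coefficient matrix (this holds since $3\nmid n$). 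Hence $b,c$, and therefore $B,C$, depend affinely on $a$, so $A-B$ is affine in $a$ and $s_3^2=\norm{A-B}^2$ is a quadratic in $a$ with leading coefficient $\norm{\frac{d}{da}(A-B)}^2>0$; thus $s_3^2$ is a \emph{strictly convex} parabola on each piece.

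Next I would feed this into the earlier lemmas. By Lemma~\ref{lem:crossing-extrema} all vertex crossings attain a common value $M$ of $s_3$, so on each piece the strictly convex $s_3^2$ has equal values at its two endpoints; it therefore has a unique interior minimum and is strictly decreasing, then strictly increasing. By Lemma~\ref{lem:vertex-midpt-impossible} a midpoint crossing is never a vertex crossing, so each midpoint crossing lies in the interior of a piece and, by Lemma~\ref{lem:crossing-extrema}, is a local extremum there; since the parabola has a unique interior critical point, each midpoint crossing coincides with the minimum of its piece, and distinct midpoint crossings lie in distinct pieces. A counting argument then promotes this injection to a bijection: as the basepoint traverses $P_n$ once, each of the three star-vertices sweeps $P_n$ once and so passes through all $n$ corners and all $n$ edge-midpoints, giving $3n$ vertex crossings and $3n$ midpoint crossings (all distinct when $3\nmid n$). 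Since the number of pieces equals the number of vertex crossings, which equals $3n$, which equals the number of midpoint crossings, the injection from midpoint crossings to pieces is a bijection, so every piece contains exactly one midpoint crossing, located at its minimum. Reading off the parabola then yields all four assertions of Conjecture~\ref{conj:monotonic} for $l=1$: the crossings alternate between the two types; $s_3$ is strictly monotone between adjacent crossings; and, since $\mu<M$ strictly on every piece, the common midpoint value $\mu$ is the global minimum and the common vertex value $M$ the global maximum.

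The main obstacle is the counting step — verifying that for $3\nmid n$ the $3n$ vertex crossings (and the $3n$ midpoint crossings) are genuinely distinct, i.e.\ that no inscribed equilateral triangle has two of its vertices simultaneously at corners, nor two simultaneously at midpoints; were such a coincidence to occur, a single basepoint would absorb two crossing events and the equality $3n=3n$ could break. I expect to rule this out by showing that two triangle-vertices at corners $\omega^{p},\omega^{q}$ would force the third vertex off $P_n$ unless $3\mid n$ (the all-corner case already needs $3\mid n$), with the midpoint statement following by the $D_n$-symmetry. Alternatively, one can bypass the count entirely by computing the minimizer of the explicit quadratic on each piece and verifying directly that one star-vertex then lands on an edge-midpoint, invoking the uniqueness of the inscribed star (Lemma~\ref{lem:star-unique}) to recognize such a midpoint-touching configuration as the symmetric one.
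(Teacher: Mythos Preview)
Your approach is correct and genuinely different from the paper's. The paper's proof in Appendix~\ref{sec:monotonic-1} is synthetic: it extends the three polygon edges carrying the triangle vertices to a triangle $ABC$, constructs the equilateral triangle $TUV$ circumscribed about $ABC$ and parallel to $PQR$, proves that $PQ\cdot UV$ depends only on $ABC$, and then uses the outer Napoleon triangle $IJK$ of $ABC$ to write $UV=2\,IJ\cos\theta$; hence $PQ$ varies as $1/\cos\theta$ and is strictly monotone away from a unique minimizer (the configuration parallel to $IJK$) on each piece. Your route is analytic: the complex-linear relation $A+\zeta B+\zeta^{2}C=0$ forces $B,C$ to depend affinely on the edge parameter $a$, so $s_3^2=\lvert A-B\rvert^2$ is a strictly convex quadratic on each piece, after which the already-established Lemmas~\ref{lem:vertex-midpt-impossible} and~\ref{lem:crossing-extrema} do the rest. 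Your argument is more elementary and self-contained (no Napoleon triangles), while the paper's gives an explicit geometric identification of the minimizer.

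Your stated ``main obstacle''---that for $3\nmid n$ no inscribed equilateral triangle has two of its vertices simultaneously at polygon corners (or simultaneously at midpoints)---is already settled by Lemma~\ref{lem:coincidence-num}: the coincidence number of any inscribed $(2l+1)$-star is $0$ or $\gcd(n,2l+1)$, which is $1$ when $3\nmid n$. Together with Lemma~\ref{lem:num-crossings} this gives exactly $\lcm(n,3)=3n$ distinct vertex crossings and $3n$ distinct midpoint crossings, so your bijection between pieces and midpoint crossings goes through with no further work. (For the strict convexity, note that $\frac{d}{da}(A-B)=0$ would force the three triangle vertices to translate in parallel, hence lie on three mutually parallel edges of a convex polygon---impossible; so the leading coefficient is indeed positive.)
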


See Appendix~\ref{sec:monotonic-1} for the proof of Lemma~\ref{lem:monotonic-1}.

\subsection{Counting extrema of the side length function}\label{ss:counting}

We know that the number of periodic orbits in $\vrleq{P_n}{r}$ is equal to the number of stars of side length $r$ that can be inscribed in $P_n$, and this is closely related to the number of solutions to the equation $s_{2l+1}(p) = r$ for $p\in P_n$. The results of this subsection provide a method for counting the number of these solutions.

\begin{lemma}\label{lem:coincidence-num}
Let $n\ge 4l+2$. Any $(2l+1)$-pointed star $S$ inscribed in $P_n$ has coincidence number equal to either 0 or $\gcd(n,2l+1)$.
\end{lemma}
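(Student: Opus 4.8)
Write $d=\gcd(n,2l+1)$, $m=(2l+1)/d$, and $\nu=n/d$, so that $\gcd(m,\nu)=1$. Recall that under the identification $P_n\cong[0,1)$ the inscribed star $S$ is a single periodic orbit $u_0\to u_1\to\cdots\to u_{2l}\to u_0$ of $g_r$ of length $2l+1$ and winding number $l$, so $g_r$ restricts to the $(2l+1)$-cycle $u_i\mapsto u_{i+1}$. By Lemma~\ref{lem:vertex-midpt-impossible} the coincidences of $S$ are either all vertex coincidences or all midpoint coincidences; the plan is to treat the vertex case, the midpoint case being word-for-word identical after replacing ``vertex $k$ of $P_n$'' by ``midpoint of edge $k$'' (both families are cyclically permuted by the symmetries of $P_n$). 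If $S$ has no coincidence we are done, so assume some star vertex is a vertex of $P_n$; after relabeling let $u_0$ be vertex $0$. Let $\rho$ be the rotation by $2\pi/n$: it is an orientation-preserving isometry carrying $P_n$ to itself, advancing each vertex index by $1$ and shifting the arc-length coordinate by $1/n$. Because $g_r$ is built only from the Euclidean metric and the counterclockwise orientation (Definition~\ref{def:gv}), it commutes with every $\rho^c$; and for $r<r_n$ it is continuous (Lemma~\ref{lem:g_r-cont-p}) and strictly monotone (as in the proof of Lemma~\ref{lem:s-cts}), hence an orientation-preserving circle homeomorphism with rotation number $l/(2l+1)$.

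The number of coincidences will first be identified with the order of the rotational stabilizer of $S$. Let $A\subseteq\Z/n\Z$ collect the indices $k$ with vertex $k$ of $P_n$ lying on $S$, and let $H=\{c~|~\rho^c(S)=S\}$. If $k,k'\in A$ then $\rho^{k'-k}$ carries vertex $k$ to vertex $k'$; as $\rho^{k'-k}$ sends inscribed stars to inscribed stars, and as $S$ is the unique star through vertex $k$ and also through vertex $k'$ (Lemma~\ref{lem:star-unique}), we get $\rho^{k'-k}(S)=S$, i.e.\ $k'-k\in H$; conversely $H+k\subseteq A$. Thus $A$ is a single coset of $H$ and $|A|=|H|$. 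Now $|H|\mid n$, and every nontrivial $\rho^c$ with $c\in H$ rotates about the origin while fixing the set of $2l+1$ star vertices (none equal to the origin), so it permutes them freely and its order divides $2l+1$; hence $|H|\mid 2l+1$. Therefore $|A|=|H|\mid d$, so $|A|\le d$.

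For the reverse bound I recover the symmetry dynamically. Since $\rho^\nu$ has order $d$, commutes with $g_r$, and shifts the arc-length coordinate by $\nu/n=1/d$, the homeomorphism $g_r$ descends to an orientation-preserving homeomorphism $\bar g$ of the quotient circle $C'=(\R/\Z)/\langle x\mapsto x+\tfrac1d\rangle$. Comparing lifts shows that $\bar g$ has rotation number $d\cdot\tfrac{l}{2l+1}=\tfrac{l}{m}\pmod 1$, which is in lowest terms because $\gcd(l,2l+1)=1$ forces $\gcd(l,m)=1$; by the classical period rigidity for circle homeomorphisms, every periodic orbit of $\bar g$ has period exactly $m$. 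The image of $u_0$ in $C'$ is periodic, so $g_r^{\,m}(u_0)$ lies in $u_0+\tfrac1d\Z$, i.e.\ $u_m$ is again a vertex of $P_n$, say vertex $j_0\nu$. Iterating the equivariance $g_r^{\,m}(x+\tfrac1d)=g_r^{\,m}(x)+\tfrac1d$ gives $u_{km}=$ vertex $kj_0\nu$ for all $k$. The star vertices $u_0,u_m,\dots,u_{(d-1)m}$ are distinct (their indices are distinct modulo $dm=2l+1$), and each is a vertex of $P_n$, so $|A|\ge d$. Combined with the previous paragraph this yields $|A|=d$, and since every coincidence has the single type already fixed, the coincidence number of $S$ equals $\gcd(n,2l+1)$.

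The genuine difficulty is this lower bound: a single coincidence must force the full $d$-fold rotational symmetry of $S$, which is not automatic precisely because an inscribed star is only required to be equilateral, not equiangular, and so carries no symmetry a priori. The symmetry is instead produced by the dynamics, and the delicate points to check are that $g_r$ truly commutes with the symmetries of $P_n$ and is an honest orientation-preserving circle homeomorphism for $r<r_n$ (so that the quotient and rotation-number arguments apply), and that the induced rotation number $\tfrac{l}{m}$ is in lowest terms. Everything else---the coset description of $A$ and the divisibility $|H|\mid\gcd(n,2l+1)$---is then elementary group theory.
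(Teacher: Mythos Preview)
Your proof is correct and takes a genuinely different route from the paper's, particularly in the lower bound.

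For the upper bound $|A|\le d$, both arguments are close in spirit: the paper picks minimal ``return times'' under $g_r$ and under the rotation $h(p)=\omega p$ to show $|C|\mid(2l+1)$ and $|C|\mid n$ directly, while you package the same divisibility via the rotational stabilizer $H$ (using Lemma~\ref{lem:star-unique} to identify $|A|=|H|$, and then the free $H$-action on the $2l+1$ star vertices). These are essentially reformulations of one another.

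The real divergence is in the lower bound. The paper argues elementarily: using the $d$-fold rotational symmetry of $P_n$, it shows that the partial winding sum $\sum_{i=0}^{(2l+1)/d-1}\dpn(g_r^i(v_0),g_r^{i+1}(v_0))$ must equal exactly $l/d$ (any strict inequality would, after $d$ iterations, contradict the total winding $l$), and hence $g_r^{(2l+1)/d}(v_0)$ lands on a vertex. You instead invoke Poincar\'e's rotation number theory: after checking that $g_r$ is an orientation-preserving circle homeomorphism commuting with $\rho^{\nu}$, you pass to the quotient by $\langle\rho^{\nu}\rangle$, compute the induced rotation number $l/m$ in lowest terms, and conclude that the image of $u_0$ has period exactly $m$---so $u_m$ already lies in the $\rho^{\nu}$-orbit of $u_0$ and is therefore a vertex. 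Your route is more conceptual and immediately explains \emph{why} the symmetry is forced (period rigidity), at the cost of importing the classical fact about periods of circle homeomorphisms; the paper's route is fully self-contained and avoids any appeal to rotation-number machinery. Both yield the same conclusion $|A|=\gcd(n,2l+1)$.

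One small point worth making explicit (you gesture at it): that $g_r$ is a bijection for $r<r_n$ follows because the connectedness in Lemma~\ref{lem:conn} makes the clockwise analogue $g_r^{-1}$ equally well-defined and continuous, and a fixed-point-free continuous self-map of $S^1$ is automatically orientation-preserving.
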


\begin{proof}
By Lemma~\ref{lem:vertex-midpt-impossible}, we can proceed by assuming that all coincidences are vertex coincidences; the proof is identical if we assume that all are midpoint coincidences. Suppose that a $(2l+1)$-pointed star $S$ is inscribed into $P_n$, and let $C$ be the set of coincidences of $S$ in $P_n$, denoting $d = |C|$. If $d = 0$, we are done. Otherwise, fix some $v \in C$ and define $m = \min\{i > 0: g_r^i(v)\in C\}$; note that this implies $\{v,g_r^m(v),g_r^{2m}(v),\dots\}\subseteq C$. We now prove that this set is indeed all of $C$. Towards a contradiction, suppose that there were some coincidence not contained in this orbit, say $g_r^{m'}(v)$. Then we have both $g_r^{am}(v)$ and $g_r^{bm'}(v)$ in $C$, so $g_r^{am+bm'}(v) \in C$ holds for all integers $a, b$. This proves that we have $g_r^e(v) \in C$ for $e = \gcd(m,m')$ which contradicts the minimality of $m$ if $e < m$. Hence, we have $e =m$, so $m$ divides $m'$, so $\{v,g_r^m(v),g_r^{2m}(v),\dots\} = C$. Note that in particular, we have $md = 2l+1$.

Now define another dynamical system $h: P_n \to P_n$ by $h(p) = \omega p$. Let $k = \min\{i > 0: h^i(v)\in C\}$, and note $\{v,h^{k}(v),h^{2k}(v),\dots\}\subseteq C$. If we now had some $h^{k'}(v)$ in $C$ but not in this orbit, then we would have $h^{ak+bk'}(v) \in C$ for all integers $a, b$, i.e.\ $h^e(v) \in C$ for $e = \gcd(k,k')$. As before, this is a contradiction unless $e = k$, so we establish $kd = n$. Hence, we see that $d$ is a common divisor of $n$ and $2l+1$, which gives $|C| \le \gcd(n,2l+1)$.

For the reverse inequality, it suffices to prove that if $d\mid n$, if $d\mid(2l+1)$ and if $v_0$ is a coincidence, then $g_r^{(2 l+1)/d}(v_0)$ is also a coincidence. This result will imply that $\{v_0,g_r^{(2 l+1)/d}(v_0),g_r^{2(2 l+1)/d}(v_0),\dots\} \subseteq C$ and hence that $|C| \ge \gcd(n,2l+1)$.

To see this, note that since $d\mid n$, we have that $\frac{l}{d}=\frac{l n}{d}\cdot\frac{1}{n}$ is an integer multiple of $\frac{1}{n}$. It follows that the point on $P_n$ at a counterclockwise distance of $\frac{l}{d}$ from vertex $v_0$ is another vertex of $P_n$. Let $v_1$ be the vertex of $P_n$ such that $\dpn(v_0,v_1) = \frac{l}{d} (\mbox{mod } 1)$, and similarly let $v_j$ be the vertex such that $\dpn(v_{0},v_j) = \frac{jl}{d} (\mbox{mod } 1)$ for each $j \in \{2,3, \ldots, d\}$. By the symmetry of $P_n$, we have that the value
\begin{equation}\label{eq:v1-v2}
\sum_{i=0}^{(2l+1)/d-1}\dpn(g_r^{i}(v_j),g_r^{i+1}(v_j))
\end{equation}
is fixed across all $j \in \{0,1, \ldots, d-1\}$. We now claim that this common value is $\frac{l}{d}$. To see this, first suppose towards a contradiction that it were strictly less than $\frac{l}{d}$. Then by cyclicity, we have $\sum_{i=0}^{2l}\dpn(g_r^{i}(v_0),g_r^{i+1}(v_0)) < \frac{dl}{d} = l$. But this is a contradiction, because $v_0$ is taken to be a vertex of an inscribed $(2l+1)$-star of radius $r$.  Likewise, if (\ref{eq:v1-v2}) were strictly greater than $\frac{l}{d}$, then this would give $\sum_{i=0}^{2l}\dpn(g_r^{i}(v_0),g_r^{i+1}(v_0)) > \frac{dl}{d} = l$, which is again a contradiction. Thus we have that the common value of (\ref{eq:v1-v2}) is exactly $\frac{l}{d}$. But because $v_j$ is defined to the unique point on $P_n$ whose distance from $v_0$ is $\frac{jl}{d} (\mbox{mod } 1)$, it follows that $g_r^{j(2l+1)/d}(v_0) = v_j$, i.e.\ $g_r^{j(2l+1)/d}(v_0)$ is a coincidence. Combining these, we have proven exactly $|C| = \gcd(n,2l+1)$.
\end{proof}

\begin{lemma}\label{lem:num-crossings}
Let $n\ge 4l+2$. The number of vertex crossings in $P_n$ and the number of midpoint crossings in $P_n$ are each equal to $n(2l+1)/\gcd(n,2l+1) = \lcm(n,2l+1)$.
\end{lemma}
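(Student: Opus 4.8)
The plan is to count vertex crossings (and, identically, midpoint crossings) by identifying the set of crossings with a \emph{disjoint} union of vertex sets of the stars that carry a coincidence, and then distributing the $n$ vertices of $P_n$ among those stars using the exact coincidence count of Lemma~\ref{lem:coincidence-num}. Throughout set $d=\gcd(n,2l+1)$ and recall that $n\ge 4l+2$, so by Lemmas~\ref{lem:existence} and~\ref{lem:star-unique} every point of $P_n$ lies on exactly one inscribed $(2l+1)$-star.

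First I would record the dictionary between crossings and stars. By definition a point $x\in P_n$ is a vertex crossing precisely when its unique star $S_{2l+1}(x)$ contains a vertex of $P_n$; call such a star a \emph{vertex-coincidence star}. Since uniqueness forces $S_{2l+1}(y)=S$ for every vertex $y$ of a given star $S$, the set of vertex crossings is exactly the union of the vertex sets of all vertex-coincidence stars; moreover, because a point lies on at most one star, these vertex sets are pairwise disjoint, and each has cardinality $2l+1$. Hence the number of vertex crossings equals $(2l+1)$ times the number of distinct vertex-coincidence stars.

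Next I would count those stars. Each of the $n$ vertices $\omega^j$ of $P_n$ is the basepoint of $S_{2l+1}(\omega^j)$, which is automatically a vertex-coincidence star since it contains $\omega^j$. By Lemma~\ref{lem:coincidence-num} its coincidence number is exactly $d$, and by Lemma~\ref{lem:vertex-midpt-impossible} all $d$ of these coincidences are vertex coincidences, so each vertex-coincidence star contains exactly $d$ of the $n$ vertices of $P_n$. As distinct stars are vertex-disjoint, assigning each vertex of $P_n$ to its star partitions the $n$ vertices into blocks of size $d$ (using $d\mid n$), giving exactly $n/d$ vertex-coincidence stars. Multiplying, the number of vertex crossings is $(2l+1)\cdot(n/d)=n(2l+1)/\gcd(n,2l+1)=\lcm(n,2l+1)$, the last equality being $\gcd\cdot\lcm=$ product. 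The midpoint case is word-for-word identical, run instead over the $n$ edge midpoints of $P_n$ and the midpoint-coincidence stars.

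The computation is minimal; the load-bearing step is the partition bookkeeping. I expect the only place needing care is confirming the clean disjoint-union correspondence ``crossing $\leftrightarrow$ vertex of a coincidence-bearing star'': namely that no point lies on two stars (uniqueness, Lemma~\ref{lem:star-unique}), that each coincidence star contributes exactly $d$ \emph{base} vertices of $P_n$ yet exactly $2l+1$ \emph{crossings}, and that these two roles are not conflated. Once that is pinned down, everything reduces to the elementary identity $n(2l+1)/\gcd(n,2l+1)=\lcm(n,2l+1)$.
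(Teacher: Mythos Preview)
Your proof is correct and uses essentially the same double-counting idea as the paper: both arguments hinge on Lemma~\ref{lem:coincidence-num} (each coincidence-bearing star carries exactly $d=\gcd(n,2l+1)$ coincidences) together with uniqueness of stars. The only difference is organizational: the paper counts pairs (crossing point, coincidence in its star) to get $n(2l+1)$ and then divides by $d$, whereas you first partition the $n$ polygon vertices into $n/d$ stars and then multiply by $2l+1$; these are the same computation read in two directions.
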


\begin{proof}
The proof is identical for both vertex crossings and midpoint crossings. Let $p \in P_n$ be an arbitrary point, and consider how the star $S_{2l+1}(p)$ changes as $p$ winds once around $P_n$, in, say, the counterclockwise direction. Observe that $p$ is a vertex coincidence at $n$ different points, and, by symmetry, the same happens for all $2l+1$ vertices of $S_{2l+1}(p)$. Hence, $n(2l+1)$ vertex coincidences occur. However, by Lemma~\ref{lem:coincidence-num}, all coincidences occur in sets of size gcd$(n,2l+1)$. Therefore, the total number vertex crossings is exactly $n(2l+1)/\gcd(n,2l+1) = \lcm(n,2l+1)$.
\end{proof}

\begin{figure}[h]
\centering
\begin{tikzpicture}
\newdimen\rad
\rad=2cm
\newdimen\wid
\wid=0.02cm
\definecolor{bg}{RGB}{230,230,230}
\definecolor{mincolor}{RGB}{25, 50, 196}
\definecolor{maxcolor}{RGB}{74,195,63}

\foreach \rot in {0,60,...,359} {
	\draw[rotate around={\rot:(0,0)},line width=\wid,maxcolor] (1*\rad,0*\rad) -- (-0.5*\rad,0.86603*\rad) -- (-0.5*\rad,-0.86603*\rad) --  cycle;
}

\foreach \rot in {0,60,...,359} {
	\draw[rotate around={\rot:(0,0)},line width=\wid,mincolor] (0.75*\rad,0.433013*\rad) -- (-0.75*\rad,0.433013*\rad) -- (0*\rad,-0.86603*\rad) --  cycle;
}

\draw[line width=0.06cm,black] (0:\rad) \foreach \x in {60,120,...,359} {
	-- (\x:\rad)
} -- cycle (90:\rad);
\end{tikzpicture}\qquad\begin{tikzpicture}

\newdimen\rad
\rad=2cm
\newdimen\wid
\wid=0.02cm
\definecolor{bg}{RGB}{230,230,230}
\definecolor{mincolor}{RGB}{25, 50, 196}
\definecolor{maxcolor}{RGB}{74,195,63}

\foreach \rot in {0,45,...,359} {
	\draw[rotate around={\rot:(0,0)},line width=\wid,maxcolor] (1*\rad,0*\rad) -- (-0.426244*\rad,0.823444*\rad) -- (-0.426244*\rad,-0.823444*\rad) --  cycle;
}

\foreach \rot in {0,45,...,359} {
	\draw[rotate around={\rot:(0,0)},line width=\wid,mincolor] (0.853553*\rad,0.353553*\rad) -- (-0.765100*\rad,0.567100*\rad) -- (-0.140997*\rad,-0.941597*\rad) --  cycle;
}

\draw[line width=0.06cm,black] (0:\rad) \foreach \x in {45,90,...,359} {
	-- (\x:\rad)
} -- cycle (90:\rad) ;
\end{tikzpicture}\qquad\begin{tikzpicture}

\newdimen\rad
\rad=2cm
\newdimen\wid
\wid=0.02cm
\definecolor{bg}{RGB}{230,230,230}
\definecolor{mincolor}{RGB}{25, 50, 196}
\definecolor{maxcolor}{RGB}{74,195,63}

\foreach \rot in {0,32.7272,...,359} {
	\draw[rotate around={\rot:(0,0)},line width=\wid,maxcolor] (1*\rad,0*\rad) -- (-0.753409*\rad,0.602405*\rad) -- (0.294555*\rad,-0.927009*\rad) -- (0.294611*\rad,0.927001*\rad) -- (-0.753440*\rad,-0.602358*\rad) -- cycle;
}

\foreach \rot in {0,32.7272,...,359} {
	\draw[rotate around={\rot:(0,0)},line width=\wid,mincolor] (0.920627*\rad,0.270320*\rad) -- (-0.930065*\rad,0.327523*\rad) -- (0.531924*\rad,-0.808676*\rad) -- (0.010224*\rad,0.967890*\rad) -- (-0.605195*\rad,-0.778431*\rad) -- cycle;
}

\draw[line width=0.06cm,black] (0:\rad) \foreach \x in {32.7272,65.4545,...,359} {
	-- (\x:\rad)
} -- cycle (90:\rad);
\end{tikzpicture}
\caption{Geometric realizations of $s_{n,l}$ and $t_{n,l}$ in blue and green, respectively, for $(n,l) = (6,1)$ (left), $(8,1)$ (center) and $(11,2)$ (right).}
\label{fig:P9_s_t}
\end{figure}
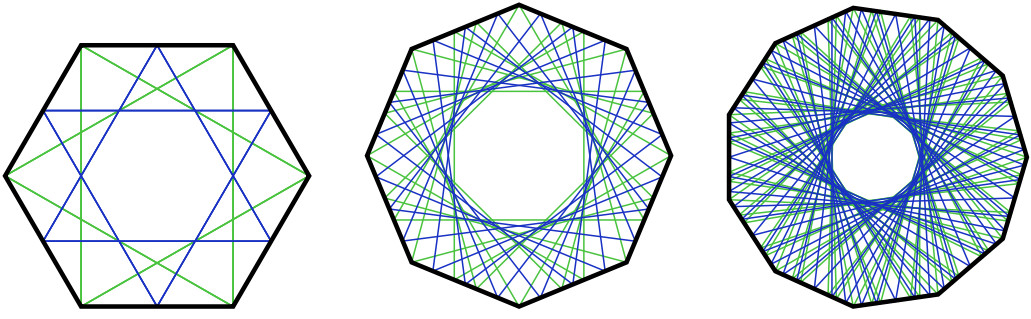

\begin{corollary}\label{cor:orbit-counting}
If Conjecture~\ref{conj:monotonic} is true,  then the number of equilateral $(2l+1)$-pointed stars of side length $r$ that can be inscribed into $P_n$ is equal to
\[ \begin{cases}
n/\gcd(n,2l+1) &\mbox{ if } r = s_{n,l} \mbox{ or } t_{n,l} \\
2n/\gcd(n,2l+1) &\mbox{ if } s_{n,l} < r < t_{n,l} \\
0 &\mbox{ otherwise}
\end{cases} \]
\end{corollary}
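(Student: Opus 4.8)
The plan is to reduce the counting of stars to counting the solutions of the equation $s_{2l+1}(p)=r$ over $p\in P_n$, and then to read these solution counts off the qualitative picture of $s_{2l+1}$ supplied by Conjecture~\ref{conj:monotonic}.

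First I would establish the bookkeeping identity relating stars to solution points. By Lemma~\ref{lem:existence} every point of $P_n$ lies on a $(2l+1)$-star, and by Lemma~\ref{lem:star-unique} this star is unique; hence the map sending a point $p$ to the side length of its star is exactly the function $s_{2l+1}$ of Definition~\ref{def:s-fns}. A star of side length $r$ has exactly $2l+1$ vertices, each of which is a solution of $s_{2l+1}(p)=r$, and by uniqueness two distinct stars share no vertex (a common vertex would be contained in two stars). Consequently the solution set $\{p\in P_n:s_{2l+1}(p)=r\}$ is partitioned into the vertex sets of the stars of side length $r$, so the number of such stars equals $\tfrac{1}{2l+1}\lvert\{p\in P_n:s_{2l+1}(p)=r\}\rvert$.

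Next I would invoke Conjecture~\ref{conj:monotonic}, which tells us that $s_{2l+1}$ attains its global minimum value $s_{n,l}$ exactly at the midpoint crossings and its global maximum value $t_{n,l}$ exactly at the vertex crossings, that vertex and midpoint crossings alternate around $P_n$, and that $s_{2l+1}$ is strictly monotonic on each arc between consecutive crossings. By Lemma~\ref{lem:num-crossings} there are $\lcm(n,2l+1)$ vertex crossings and $\lcm(n,2l+1)$ midpoint crossings, hence $2\lcm(n,2l+1)$ crossings in total, dividing $P_n$ into $2\lcm(n,2l+1)$ arcs. For $r=s_{n,l}$ the solution set is exactly the midpoint crossings (strict monotonicity forbids the minimum from being attained in the interior of an arc), so it has $\lcm(n,2l+1)$ elements; dividing by $2l+1$ and using $\lcm(n,2l+1)/(2l+1)=n/\gcd(n,2l+1)$ gives the claimed count, and the case $r=t_{n,l}$ is identical with vertex crossings in place of midpoint crossings. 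For $s_{n,l}<r<t_{n,l}$, continuity of $s_{2l+1}$ (Lemma~\ref{lem:s-cts}) together with strict monotonicity yields exactly one solution on each of the $2\lcm(n,2l+1)$ arcs, for $2\lcm(n,2l+1)$ solutions in all, whence dividing by $2l+1$ gives $2n/\gcd(n,2l+1)$. Finally, for $r<s_{n,l}$ or $r>t_{n,l}$ there are no solutions, hence no stars. Since all of the substantive work has already been carried out in the cited lemmas and the conjecture, I expect no serious obstacle; the only point demanding care is the combinatorial bookkeeping, namely confirming that each star is counted exactly $2l+1$ times among the solution points and that strict monotonicity makes the extreme values attained precisely at the crossings while every intermediate value is attained exactly once per arc.
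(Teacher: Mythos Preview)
Your proposal is correct and follows essentially the same approach as the paper's proof: reduce to counting the solutions of $s_{2l+1}(p)=r$, use Lemma~\ref{lem:num-crossings} together with the oscillation picture from Conjecture~\ref{conj:monotonic} to count them, and divide by $2l+1$. Your write-up is in fact slightly more careful about the bookkeeping (explicitly partitioning $P_n$ into $2\lcm(n,2l+1)$ monotone arcs and arguing one solution per arc in the intermediate case), but the argument is the same.
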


\begin{proof}
Recall that $s_{2l+1}$ is continuous on $P_n$, and that every $(2l+1)$-star contains $2l+1$ points. Hence, counting the number of inscribed stars of side length $r$ amounts to counting the number of points $p\in P_n$ for which $s_{2l+1}(p) = r$, and then dividing by $2l+1$. By Conjecture~\ref{conj:monotonic}, $s_{2l+1}$ oscillates between its global minimum and global maximum. Hence, the number of points $p$ for which $s_{2l+1}(p) = s_{n,l}$ or $s_{2l+1}(p) = t_{n,l}$ are respectively equal to the number of midpoint crossings and vertex crossings in $P_n$. By Lemma~\ref{lem:num-crossings}, this is exactly $n(2l+1)/\gcd(n,2l+1)$, which implies the existence of $n/\gcd(n,2l+1)$ stars of this side length. For $r \in (s_{n,l},t_{n,l})$, Conjecture~\ref{conj:monotonic} guarantees that the number of intersection points is twice the number of global minima or maxima. That is, there are $2n(2l+1)/\gcd(n,2l+1)$ intersections points and therefore $2n/\gcd(n,2l+1)$ stars. Finally, it is clear from the definition of $s_{n,l}$ and $t_{n,l}$ that there are no stars with side length $r \notin [s_{n,l},t_{n,l}]$. This completes the proof.
\end{proof}

\section{Vietoris--Rips complexes of regular polygons}\label{sec:vr}

As our main theorem, we describe the homotopy type of $\vrc{P_n}{r}$ when $r<r_n$, i.e.\ when $\vr{P_n}{r}$ is a cyclic graph. The following result assumes Conjecture~\ref{conj:monotonic}. However, we know by virtue of Lemmas~\ref{lem:monotonic-divide} and \ref{lem:monotonic-1} that the result holds when $(2l+1)\mid n$ or when $l = 1$. For ease of notation, we let $\qnl=\frac{n}{\gcd(n,2l+1)}$, and we would like to emphasize that $t_{n,0}=0$.


\begin{theorem}\label{thm:main-general}
Suppose $n\ge4l+2$ is such that Conjecture~\ref{conj:monotonic} is true. Then for $r<r_n$, we have\footnote{We can also describe the homotopy type of $\vrless{P_n}{r_n}$, or $\vrleq{P_n}{r_n}$ when $n$ is even (see Remark~\ref{rem:rn}), but for the sake of simplicity we omit this here.}
\begin{align*}
\vrcless{P_n}{r}&\simeq\begin{cases}
\bigvee^{\qnl-1}S^{2l} &\mbox{when }s_{n,l}<r\le t_{n,l}\\
S^{2l+1} & \mbox{when } t_{n,l}<r\le s_{n,l+1}
\end{cases}\\
\vrcleq{P_n}{r}&\simeq\begin{cases}
\bigvee^{\qnl-1}S^{2l} &\mbox{when }r=s_{n,l}\\
\bigvee^{3\qnl-1} S^{2l} &\mbox{when }s_{n,l}<r<t_{n,l}\\
\bigvee^{2\qnl-1}S^{2l} &\mbox{when }r=t_{n,l}\\
S^{2l+1} &\mbox{when } t_{n,l}<r< s_{n,l+1}.
\end{cases}
\end{align*}
Furthermore, 
\begin{itemize}
\item For $s_{n,l}<r<\tr\le t_{n,l}$ or $t_{n,l}<r<\tr\le s_{n,l+1}$, the inclusion $\vrcless{P_n}{r}\hookrightarrow\vrcless{P_n}{\tr}$ is a homotopy equivalence.
\item For $t_{n,l}<r<\tr< s_{n,l+1}$, inclusion $\vrcleq{P_n}{r}\hookrightarrow\vrcleq{P_n}{\tr}$ is a homotopy equivalence.
\item For $s_{n,l}\le r<\tr\le t_{n,l}$, inclusion $\vrcleq{P_n}{r}\hookrightarrow\vrcleq{P_n}{\tr}$ induces a rank $\qnl-1$ map on $2l$-dimensional homology $H_{2l}(-;\F)$ for any field $\F$.
\end{itemize}
\end{theorem}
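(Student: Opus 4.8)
The plan is to reduce everything to the dichotomy of Theorem~\ref{thm:homotopy-generic} by first pinning down $\wf(\vr{P_n}{r})$ as a function of $r$, and then, in the singular regime, counting periodic orbits $P$ and invariant sets of permanently fast points $F$. By Proposition~\ref{prop:connected}, for every $r<r_n$ the graph $\vr{P_n}{r}$ is a closed and continuous metric cyclic graph, so Theorem~\ref{thm:homotopy-generic} applies. Combining the side-length function $\sl$ (Definition~\ref{def:s-fns}) with Definition~\ref{def:s,t} shows that a $(2l+1)$-star of side length $r$ is inscribable exactly when $r\in[s_{n,l},t_{n,l}]$; since $\wf(\vr{P_n}{r})$ is monotone in $r$, this forces $\wf(\vr{P_n}{r})=\tfrac{l}{2l+1}$ precisely on that interval, with $\tfrac{l}{2l+1}<\wf(\vr{P_n}{r})<\tfrac{l+1}{2l+3}$ on $(t_{n,l},s_{n,l+1})$. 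On each generic interval Theorem~\ref{thm:homotopy-generic} immediately yields $S^{2l+1}$; the only care needed is at the boundary value $r=s_{n,l+1}$, where the inscribed star has all edges of length exactly $s_{n,l+1}$ and is therefore present in $\vrleq{P_n}{s_{n,l+1}}$ (supremum attained, singular) but absent in $\vrless{P_n}{s_{n,l+1}}$ (supremum not attained, still generic), which is exactly the $<$ versus $\le$ distinction in the statement.

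For the singular intervals I would identify periodic orbits with inscribed equilateral $(2l+1)$-stars. Under the $\le$ convention the furthest-point map is $p\mapsto g_r(p)$, so a periodic orbit of length $2l+1$ and winding number $l$ is precisely a star of side length exactly $r$, and Corollary~\ref{cor:orbit-counting} gives $P=\qnl$ at $r\in\{s_{n,l},t_{n,l}\}$ and $P=2\qnl$ for $s_{n,l}<r<t_{n,l}$. The count of $F$ then follows by locating fast and slow points through the sign of $\sl(p)-r$: invoking Conjecture~\ref{conj:monotonic} (available via Lemmas~\ref{lem:monotonic-divide} and~\ref{lem:monotonic-1}), $\sl$ attains its minimum $s_{n,l}$ at the $\qnl$ midpoint stars and its maximum $t_{n,l}$ at the $\qnl$ vertex stars, and is strictly monotone between adjacent crossings. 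A non-periodic point is fast exactly when $\sl(p)<r$ and slow exactly when $\sl(p)>r$, so at $r=s_{n,l}$ every non-periodic point is slow, giving $F=0$ and $P+F=\qnl$; for $s_{n,l}<r<t_{n,l}$ the fast points cluster into $\qnl$ invariant sets around the midpoint stars, giving $F=\qnl$ and $P+F=3\qnl$; and at $r=t_{n,l}$ one again obtains $F=\qnl$, giving $P+F=2\qnl$. Under the $<$ convention no distance-$r$ cycle is realized by edges, so in this infinite cyclic graph $P=0$ throughout $(s_{n,l},t_{n,l}]$, while the midpoint regions still furnish $F=\qnl$ invariant sets, yielding $P+F=\qnl$. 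Feeding these counts into Theorem~\ref{thm:homotopy-generic} produces all the stated wedges $\bigvee^{P+F-1}S^{2l}$.

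For the inclusion statements, the generic cases—the second range of the first bullet and all of the second bullet—are immediate from the final clause of Theorem~\ref{thm:homotopy-generic}, since source and target share a vertex set and have winding fractions in the common generic window $(\tfrac{l}{2l+1},\tfrac{l+1}{2l+3}]$. For the $\vrcless$ equivalence over the singular range $s_{n,l}<r<\tr\le t_{n,l}$, I would argue that the inclusion preserves the empty set of periodic orbits and induces a bijection of invariant sets of permanently fast points, which remain anchored at the $\qnl$ midpoint stars throughout, and then invoke naturality of the homotopy equivalences of~\cite{AAR} with respect to such structure-preserving inclusions. The remaining bullet—that the $\le$-inclusion induces a rank-$\qnl-1$ map on $H_{2l}$ for $s_{n,l}\le r<\tr\le t_{n,l}$—is the genuine obstacle: one must show that of the up to $3\qnl-1$ classes present in the interior, exactly $\qnl-1$ persist across the whole interval while the remaining $2\qnl$ are ephemeral. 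My plan is to realize the persistent classes explicitly as fundamental classes of the cross-polytopal spheres supported on the $\qnl$ midpoint stars (already present at $r=s_{n,l}$ and subject to a single wedge relation), and to verify at the chain level, using the generator description of $H_{2l}$ for singular cyclic graphs from~\cite{AAFPP-J,AAR}, that these map isomorphically onto their images while the classes born from the $2\qnl$ interior stars lie in the kernel. Establishing this injectivity-plus-kernel statement, rather than the homotopy-type computations, is where the main work lies.
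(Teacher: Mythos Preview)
Your approach to the homotopy-type computations is essentially the paper's: reduce to Theorem~\ref{thm:homotopy-generic}, locate the singular regime via the side-length function $\sl$, and in that regime count $P$ (via Corollary~\ref{cor:orbit-counting}) and $F$ (via the sign of $\sl(p)-r$). Your identification of fast and slow points, the $<$/$\le$ distinction at the endpoints, and the resulting $P+F$ counts all match the paper's argument.

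The divergence is in the inclusion statements, particularly the rank-$(\qnl-1)$ claim. The paper does not attempt any chain-level analysis. Instead it passes to finite cyclic subgraphs using the cofinal poset $\tfin$ from~\cite{AAR}: one chooses $W\in\tfin(\vrleq{P_n}{r})$ and $\tW\in\tfin(\vrleq{P_n}{\tr})$ with $W\subseteq\tW$, so that the horizontal maps in
\[
\xymatrix{
\vrcleq{W}{r}\ar[r]\ar[d]^{\iota_W}&\vrcleq{P_n}{r}\ar[d]\\
\vrcleq{\tW}{\tr}\ar[r]&\vrcleq{P_n}{\tr}
}
\]
are homotopy equivalences by the proof of~\cite[Theorem~5.3]{AAR}. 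One then computes $\hit(\iota_W)=\qnl$ by observing that the three families of periodic orbits $A_j(r)$, $B_j(r)$, and the orbit in $I_j(r)$ are all pushed by $\iota_W$ and the larger-scale dynamics onto the single periodic orbit in $I_j(\tr)$; \cite[Proposition~4.2]{AAR} then yields $\rank(\iota_W^*)=\qnl-1$ as a black box. The paper uses the same $\tfin$/commutative-diagram device for the other non-generic inclusion, so your appeal to ``naturality from~\cite{AAR}'' is gesturing at exactly this machinery without naming it.

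Your proposed chain-level route has a concrete gap: individual midpoint stars do \emph{not} carry $2l$-sphere classes. By Proposition~\ref{prop:vrchtpy}, a cyclic graph with a single periodic orbit at $\wf=\tfrac{l}{2l+1}$ has contractible clique complex; the $\qnl-1$ generators of $H_{2l}$ arise as relations among the $\qnl$ orbits, not as fundamental classes supported on them separately. So the picture of ``cross-polytopal spheres on the midpoint stars, subject to a single wedge relation'' is not correct as stated, and making a direct generator-by-generator argument work would require the more delicate explicit description of $H_{2l}$-generators for singular cyclic graphs from~\cite{AAFPP-J,AAR}. This is doable in principle but considerably more labor than the $\hit$-number argument the paper uses, and your proposal does not indicate how you would carry it out.
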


\begin{proof}
Let $\{v_0,\ldots,v_{n-1}\}$ be the set of vertices of $P_n$ (i.e., the set of $n$ evenly-spaced points on the circle defining $P_n$), and let $\{m_0,\ldots,m_{n-1}\}$ be the set of midpoints of edges of $P_n$. Furthermore, choose both sets to be cyclically ordered, giving
\[ v_0\prec m_0\prec v_1\prec m_1\prec\ldots\prec m_{n-2}\prec v_{n-1}\prec m_{n-1}\prec v_0 .\]
We will consider first the homotopy types of $\vrcless{P_n}{r}$, then the homotopy types of $\vrcleq{P_n}{r}$, then the inclusion maps for $r<\tr$ in the $<$ case, and finally the inclusion maps in the $\le$ case.

For the homotopy types of $\vrcless{P_n}{r}$, if $t_{n,l}<r\le s_{n,l+1}$, then $\frac{l}{2l+1}<\wf(\vrless{P_n}{r})<\frac{l+1}{2l+3}$ by Definition~\ref{def:s,t}, and so $\vrcless{P_n}{r}\simeq S^{2l+1}$ by Theorem~\ref{thm:homotopy-generic}. Alternatively, if $s_{n,l}<r\le t_{n,l}$, then $\wf(\vrless{P_n}{r})=\frac{l}{2l+1}$. 
By Conjecture~\ref{conj:monotonic} (which we know in the cases of Lemma~\ref{lem:monotonic-divide} and Lemma~\ref{lem:monotonic-1}) and the intermediate value theorem, for all $0\le j\le n$ there are points
\[ v_j \prec a_j \prec m_j \prec b_j \prec v_{j+1}\prec v_j\]
such that we have $\qnl$ invariant sets of fast points
\[ I_j(r) = (a_j,b_j)_{P_n}\cup(a_{j+\qnl},b_{j+\qnl})_{P_n}\cup\ldots\cup(a_{j+2l\qnl},b_{j+2l\qnl})_{P_n}\]
and $\qnl$ invariant sets of slow points
\[[b_j,a_{j+1}]_{P_n}\cup[b_{j+\qnl},a_{j+\qnl+1}]_{P_n}\cup\ldots\cup[b_{j+2l\qnl},a_{j+2l\qnl+1}]_{P_n}.\]
To see how the count is obtained, recall from Lemma~\ref{lem:num-crossings} that the number of vertex crossings and midpoint crossings in $P_n$ are each equal to $\qnl(2l+1)$. Each invariant set of fast (resp.\ slow) points is a union of $(2l+1)$ segments, and hence there are $\qnl$ invariant sets of fast (resp.\ slow) points. It follows from the existence of fast points that the supremum in the definition of the winding fraction is attained (Definition~\ref{defn:inf-cyc}). Moreover, $\vrless{P_n}{r}$ has no periodic orbits and $\qnl$ permanently fast orbits. By Theorem~\ref{thm:homotopy-generic} we have $\vrcless{P_n}{r}\simeq \bigvee^{\qnl-1}S^{2l}$.

Next consider $\vrcleq{P_n}{r}$. If $t_{n,l}<r<s_{n,l+1}$, then $\frac{l}{2l+1}<\wf(\vrleq{P_n}{r})<\frac{l+1}{2l+3}$, and so $\vrcleq{P_n}{r}\simeq S^{2l+1}$ by Theorem~\ref{thm:homotopy-generic}.

When $r=s_{n,l}$, the cyclic graph $\vrleq{P_n}{s_{n,l}}$ has winding fraction $\frac{l}{2l+1}$, no fast points, and $\qnl$ periodic orbits given by the midpoints of edges in $P_n$ (by Corollary~\ref{cor:orbit-counting}, assuming Conjecture~\ref{conj:monotonic}). It follows from Theorem~\ref{thm:homotopy-generic} that $\vrleq{P_n}{s_{n,l}}\simeq\bigvee^{\qnl-1}S^{2l}$.

If $s_{n,l}<r< t_{n,l}$, then $\wf(\vrleq{P_n}{r})=\frac{l}{2l+1}$. By Conjecture~\ref{conj:monotonic}, Corollary~\ref{cor:orbit-counting}, and the intermediate value theorem, there are points
\[ v_j \prec a_j \prec m_j \prec b_j \prec v_{j+1}\prec v_j\]
for all $0\le j\le n$ such that we have that we have
\begin{itemize}
\item $\qnl$ periodic orbits of the form $A_j(r)=\{a_j, a_{j+\qnl}, a_{j+2\qnl}, \ldots, a_{j+2l\qnl}\}$,
\item $\qnl$ periodic orbits of the form $B_j(r)=\{b_j, b_{j+\qnl}, b_{j+2\qnl}, \ldots, b_{j+2l\qnl}\}$,
\item $\qnl$ invariant sets of fast points
\[ I_j(r) = (a_j,b_j)_{P_n}\cup(a_{j+\qnl},b_{j+\qnl})_{P_n}\cup\ldots\cup(a_{j+2l\qnl},b_{j+2l\qnl})_{P_n},\]
\item and $\qnl$ invariant sets of slow points
\[(b_j,a_{j+1})_{P_n}\cup(b_{j+\qnl},a_{j+\qnl+1})_{P_n}\cup\ldots\cup(b_{j+2l\qnl},a_{j+2l\qnl+1})_{P_n}.\]
\end{itemize}
The supremum in the definition of the winding fraction is attained, and $\vrleq{P_n}{r}$ has $p=2\qnl$ periodic orbits and $F=\qnl$ permanently fast orbits. By Theorem~\ref{thm:homotopy-generic} we have $\vrc{P_n}{r}\simeq \bigvee^{3\qnl-1}S^{2l}$.

When $r=t_{n,l}$, the cyclic graph $\vrleq{P_n}{t_{n,l}}$ has winding fraction $\frac{l}{2l+1}$. There are $\qnl$ periodic orbits given by the vertices of $P_n$. The remaining points of $P_n$ are divided into $\qnl$ invariant sets of permanently fast points. Hence by Theorem~\ref{thm:homotopy-generic} we have $\vrleq{P_n}{t_{n,l}}\simeq\bigvee^{2\qnl-1}S^{2l}$.

In order to study the inclusion maps for $r<\tr$, we will need some more notation from~\cite{AAR}. Given a finite cyclic graph $G$ with dynamics $f\colon V\to V$ and $\wf(G)=\frac{\omega}{\ell}$, a finite cyclic graph $\tG$ with dynamics $\tf\colon\tG\to\tG$ and $\wf(\tG)=\frac{\omega}{\ell}$, and a cyclic graph homomorphism $h\colon G\to\tG$, we say that a periodic orbit of $\tG$ with vertex set $\{\tv_1,\ldots,\tv_\ell\}$ is \emph{hit} by $h$ if there is a periodic orbit of $G$ with vertex set $\{v_1,\ldots,v_\ell\}$ such that $\tf^i(h(\{v_1,\ldots,v_\ell\}))=\{\tv_1,\ldots,\tv_\ell\}$ for some $i$ (\cite[Definition~4.2]{AAR}). We let $\hit(h)$ denote the number of distinct periodic orbits of $\tG$ hit by $h$.

The following is from~\cite[Section~5]{AAR}. For a graph $G$ with vertex set $V$, let $\fin(G)$ be the poset of all finite subsets of $V$, ordered by inclusion. For $G$ a closed and continuous cyclic graph that is singular, let $V_P\subseteq V$ be the set of periodic points. Let $\tfin(G)$ be the set of all finite $W\subseteq V$ such that
\begin{itemize}
\item $V_P\subseteq W$,
\item if $v\in W$ is a fast point of $G$ that achieves periodicity (with $f_m(v)$ periodic), then for all $1\le i\le m$ the point $f_i(v)$ is in $W$.
\item for every invariant set of permanently fast points $I$ of $G$, the induced graph $G[I\cap W]$ on the intersection contains a single periodic orbit isomorphic to $C_{2l+1}^l$.
\end{itemize}
Lemma~5.9 of~\cite{AAR} shows that for $G$ closed, continuous, singular, and with a finite number of periodic points and invariant sets of permanently fast points, the poset $\tfin(G)$ is cofinal in $\fin(G)$. In other words, given any set $W\in\fin(G)$ we can find a potentially larger set $\tW$ containing $W$ and satisfying $\tW\in\tfin(G)$

We now consider the inclusion maps in the $<$ case. For $s_{n,l}<r<\tr\le t_{n,l}$, it follows from Theorem~\ref{thm:homotopy-generic} that the inclusion $\vrcless{P_n}{r}\hookrightarrow\vrcless{P_n}{\tr}$ is a homotopy equivalence.

To prove that the inclusion $\vrcless{P_n}{r}\hookrightarrow\vrcless{P_n}{\tr}$ is a homotopy equivalence for $t_{n,l}<r<\tr\le s_{n,l+1}$, choose a finite subset $W\in\tfin(\vrless{P_n}{r})$. Using the cofinality in~\cite[Lemma~5.9]{AAR}, we can also find a set $\tW$ satisfying $W\subseteq\tW\in\tfin(\vrless{P_n}{\tr})$. The following commutative diagram is given by inclusions.
\[ \xymatrix{
\vrcless{W}{r} \ar@{->}[r] \ar@{->}[d]^{\iota_W} & \vrcless{P_n}{r} \ar@{->}[d]^{\iota_{P_n}}\\
\vrcless{\tW}{\tr} \ar@{->}[r] & \vrcless{P_n}{\tr}
} \]
The proof of~\cite[Theorem~5.3]{AAR} gives that the horizontal maps are homotopy equivalences. Note that $\hit(\iota_W)=q$ since for all $0\le j<q$, the periodic orbit for $\vrless{W}{r}$ in $I_j(r)$ hits the periodic orbit for $\vrless{\tW}{\tr}$ in $I_j(\tr)$. The map $\iota_W$ is a homotopy equivalence by~\cite[Proposition~4.2]{AAR}, and therefore $\iota_{P_n}$ is a homotopy equivalence as well.

For the $\le$ case, Theorem~\ref{thm:homotopy-generic} implies that $\vrcleq{P_n}{r}\hookrightarrow\vrcleq{P_n}{\tr}$ is a homotopy equivalence for $t_{n,l}<r<\tr< s_{n,l+1}$.

Suppose now that $s_{n,l}<r<\tr< t_{n,l}$; we will show that the inclusion $\vrcleq{P_n}{r}\hookrightarrow\vrcleq{P_n}{\tr}$ induces a rank $\qnl-1$ map on $2l$-dimensional homology $H_{2l}(-;\F)$ for any field $F$. By cofinality, let $W\in\tfin(\vrleq{P_n}{r})$ and $\tW\in\tfin(\vrleq{P_n}{\tr})$ be finite subsets with $W\subseteq\tW$. Consider the following commutative diagrams.
\[ 
\xymatrix{
\vrcleq{W}{r} \ar@{->}[r]^{\iota_r} \ar@{->}[d]^{\iota_W} & \vrcleq{P_n}{r} \ar@{->}[d]^{\iota_{P_n}}\\
\vrcleq{\widetilde{W}}{\tilde{r}} \ar@{->}[r]^{\iota_{\tilde{r}}} & \vrcleq{P_n}{\tilde{r}}
}
\hspace{3mm} 
\xymatrix{
H_{2l}(\vrcleq{W}{r};\F) \ar@{->}[r]^{\iota_r^*} \ar@{->}[d]^{\iota_W^*} & H_{2l}(\vrcleq{P_n}{r};\F) \ar@{->}[d]^{\iota_{P_n}^*}\\
H_{2l}(\vrcleq{\widetilde{W}}{\tilde{r}};\F) \ar@{->}[r]^{\iota_{\tilde{r}}^*} & H_{2l}(\vrcleq{P_n}{\tilde{r}};\F)
}
\]
The proof of~\cite[Theorem~5.3]{AAR} gives that the horizontal maps $\iota_r$ and $\iota_{\tr}$ are homotopy equivalences, and hence the horizontal maps  $\iota_r^*$ and $\iota_{\tr}^*$ are isomorphisms. To see that $\hit(\iota_W)=\qnl$, first note that $\vrleq{W}{r}$ has $3\qnl$ periodic orbits: the $\qnl$ periodic orbits $A_j(r)$, the $\qnl$ periodic orbits $B_j(r)$, and the $\qnl$ periodic orbits in $I_j(r)$ for each $0\le j<\qnl$. We have the analogous periodic orbits in $\vrleq{\tW}{\tr}$, after replacing $r$ everywhere by $\tr$. The periodic orbits in $W$ corresponding to $A_j(r)$, $B_j(r)$, and $I_j(r)$ map under $\iota_W$ to the periodic orbit corresponding to $I_j(\tr)$ in $\tW$. It follows that $\hit(\iota_W)=\qnl$. By~\cite[Proposition~4.2]{AAR} we have $\rank(\iota_W^*)=\qnl-1$, and therefore $\rank(\iota_{P_n}^*)=\qnl-1$. An analogous proof works in the more general case when $s_{n,l}\le r<\tr\le t_{n,l}$.\end{proof}

As special cases, we obtain the following theorems which are true by virtue of Lemmas~\ref{lem:monotonic-divide} and \ref{lem:monotonic-1}. 

\begin{theorem}\label{thm:main-1}
Suppose $n\ge 6$ and $l=1$. Then for $r<r_n$, we have
\begin{align*}
\vrcless{P_n}{r}&\simeq\begin{cases}
S^1 & \mbox{when } 0<r\le s_{n,1}\\
\bigvee^{q_{n,1}-1}S^2 &\mbox{when }s_{n,1}<r\le t_{n,1}\\
S^3 & \mbox{when } t_{n,1}<r\le s_{n,2}
\end{cases}\\
\vrcleq{P_n}{r}&\simeq\begin{cases}
S^1 &\mbox{when } 0<r< s_{n,1}\\
\bigvee^{q_{n,1}-1}S^2 &\mbox{when }r=s_{n,1}\\
\bigvee^{3q_{n,1}-1}S^2 &\mbox{when }s_{n,1}<r<t_{n,1}\\
\bigvee^{2q_{n,1}-1}S^2 &\mbox{when }r=t_{n,1}\\
S^3 &\mbox{when } t_{n,1}<r< s_{n,2}
\end{cases}
\end{align*}
Furthermore,
\begin{itemize}
\item For $0<r<\tr\le s_{n,1}$, for $s_{n,1}<r<\tr\le t_{n,1}$, or for $t_{n,1}<r<\tr\le s_{n,2}$, the inclusion $\vrcless{P_n}{r}\hookrightarrow\vrcless{P_n}{\tr}$ is a homotopy equivalence.
\item For $0<r<\tr< s_{n,1}$ or $t_{n,1}<r<\tr< s_{n,2}$, the inclusion $\vrcleq{P_n}{r}\hookrightarrow\vrcleq{P_n}{\tr}$ is a homotopy equivalence.
\item For $s_{n,1}\le r<\tr\le t_{n,1}$, the inclusion $\vrcleq{P_n}{r}\hookrightarrow\vrcleq{P_n}{\tr}$ induces a map of rank $q_{n,1}-1$ on $2$-dimensional homology $H_2(-;\F)$ for any field $\F$.
\end{itemize}
\end{theorem}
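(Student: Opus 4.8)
The plan is not to reprove anything from scratch but to obtain Theorem~\ref{thm:main-1} as the union of two already-established instances of Theorem~\ref{thm:main-general}, namely the cases $l=1$ and $l=0$. The only hypothesis of Theorem~\ref{thm:main-general} beyond the arithmetic condition $n\ge 4l+2$ is the validity of Conjecture~\ref{conj:monotonic}, and Lemma~\ref{lem:monotonic-1} establishes that conjecture unconditionally for $l=1$. Since the standing assumption $n\ge 6=4(1)+2$ matches the arithmetic condition, Theorem~\ref{thm:main-general} applies verbatim with $l=1$. This immediately delivers every branch of the two piecewise descriptions that mentions $s_{n,1}$ or $t_{n,1}$ --- the wedges $\bigvee^{q_{n,1}-1}S^2$, $\bigvee^{3q_{n,1}-1}S^2$, $\bigvee^{2q_{n,1}-1}S^2$, and the sphere $S^3$ on $t_{n,1}<r<s_{n,2}$ --- as well as the three inclusion-map assertions in the ``Furthermore'' clause restricted to the ranges governed by $l=1$.

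It remains to account for the low-scale branches $\vrcless{P_n}{r}\simeq S^1$ on $0<r\le s_{n,1}$ and $\vrcleq{P_n}{r}\simeq S^1$ on $0<r<s_{n,1}$, together with the homotopy equivalence on $0<r<\tr\le s_{n,1}$. These are precisely the $l=0$ specialization of Theorem~\ref{thm:main-general}: since $t_{n,0}=0$, the wedge-of-$S^0$ branch is vacuous and the only surviving claim is the generic one $S^{2(0)+1}=S^1$ on $t_{n,0}<r\le s_{n,1}$. The key point I would stress is that this branch never appeals to Conjecture~\ref{conj:monotonic}. Indeed, for these scales no $3$-star can be inscribed, so the winding fraction of $\vr{P_n}{r}$ stays below the first singular value $\tfrac13$, making the graph generic in the sense of Definition~\ref{defn:inf-cyc}; Theorem~\ref{thm:homotopy-generic} then yields $S^1$ directly, and its closing clause on inclusions of generic cyclic graphs yields the homotopy equivalence. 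Thus the $l=0$ contribution is entirely unconditional, and the full theorem follows by concatenating the $l=0$ and $l=1$ pieces.

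The difficulty here is bookkeeping rather than mathematics. One must check that the intervals in Theorem~\ref{thm:main-1} partition without gaps or overlaps into the $l=0$ and $l=1$ intervals of Theorem~\ref{thm:main-general}, and that the $\le$-versus-$<$ conventions are inherited correctly at the boundary scale $s_{n,1}$: the singular value $\wf=\tfrac13$ is attained by $\vrleq{P_n}{s_{n,1}}$ (its minimal inscribed $3$-star has all edges of length exactly $s_{n,1}$) but not by $\vrless{P_n}{s_{n,1}}$, which is exactly why the $S^1$ range is closed at $s_{n,1}$ in the $<$ table and open in the $\le$ table. I would also flag the degenerate pair $(n,l)=(6,1)$: here $t_{6,1}=r_6$, so the constraint $r<r_n$ makes the branch $t_{n,1}<r\le s_{n,2}$ vacuous, which is consistent because Theorem~\ref{thm:main-1} is only asserted for $r<r_n$.
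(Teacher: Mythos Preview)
Your proposal is correct and matches the paper's approach: the paper treats Theorem~\ref{thm:main-1} as an immediate special case of Theorem~\ref{thm:main-general}, justified by Lemma~\ref{lem:monotonic-1}, without giving a separate proof. Your explicit observation that the $l=0$ branch (the $S^1$ regime) requires only the generic case of Theorem~\ref{thm:homotopy-generic} and not Conjecture~\ref{conj:monotonic} is a useful clarification that the paper leaves implicit.
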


\begin{theorem}\label{thm:main}
Suppose $n=q(2l+1)$ with $q\ge 2$. Then for $r < r_n$, we have
\begin{align*}
\vrcless{P_n}{r}&\simeq\begin{cases}
\bigvee^{q-1}S^{2l}&\mbox{when }s_{n,l}<r\le t_{n,l}\\
S^{2l+1}&\mbox{when }t_{n,l}<r\le s_{n,l+1}
\end{cases}\\
\vrcleq{P_n}{r}&\simeq\begin{cases}
\bigvee^{q-1}S^{2l}&\mbox{when }r=s_{n,l}\\ 
\bigvee^{3q-1}S^{2l}&\mbox{when }s_{n,l}<r< t_{n,l}\\
\bigvee^{2q-1}S^{2l}&\mbox{when }r=t_{n,l}\\
S^{2l+1}&\mbox{when }t_{n,l}<r< s_{n,l+1}.
\end{cases}
\end{align*}
Furthermore, 
\begin{itemize}
\item For $s_{n,l}<r<\tr\le t_{n,l}$ or $t_{n,l}<r<\tr\le s_{n,l+1}$, the inclusion $\vrcless{P_n}{r}\hookrightarrow\vrcless{P_n}{\tr}$ is a homotopy equivalence.
\item For $t_{n,l}<r<\tr< s_{n,l+1}$, inclusion $\vrcleq{P_n}{r}\hookrightarrow\vrcleq{P_n}{\tr}$ is a homotopy equivalence.
\item For $s_{n,l}\le r<\tr\le t_{n,l}$, inclusion $\vrcleq{P_n}{r}\hookrightarrow\vrcleq{P_n}{\tr}$ induces a rank $q-1$ map on $2l$-dimensional homology $H_{2l}(-;\F)$ for any field $\F$.
\end{itemize}
\end{theorem}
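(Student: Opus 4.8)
The plan is to recognize that Theorem~\ref{thm:main} is precisely the specialization of Theorem~\ref{thm:main-general} to the case in which $2l+1$ divides $n$, so that no new topological work is needed: the entire argument reduces to verifying that the hypotheses of Theorem~\ref{thm:main-general} hold and then simplifying the multiplicity $\qnl$.

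First I would check the hypotheses of Theorem~\ref{thm:main-general}. Since $n=q(2l+1)$ with $q\ge2$, we have $n\ge 2(2l+1)=4l+2$, so the standing assumption $n\ge 4l+2$ is satisfied (this is exactly why the restriction $q\ge2$ is imposed: for $q=1$ one would have $n=2l+1<4l+2$). Moreover, because $(2l+1)\mid n$, Lemma~\ref{lem:monotonic-divide} guarantees that Conjecture~\ref{conj:monotonic} holds for this pair $(n,l)$. Consequently Theorem~\ref{thm:main-general} applies \emph{unconditionally} in this regime, with no appeal to the unproven conjecture.

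Next I would simplify the multiplicity appearing in Theorem~\ref{thm:main-general}. Recall $\qnl=n/\gcd(n,2l+1)$. Since $2l+1$ divides $n$, we have $\gcd(n,2l+1)=2l+1$, and therefore $\qnl=n/(2l+1)=q$. Substituting $\qnl=q$ into each clause of Theorem~\ref{thm:main-general}---both the homotopy-type descriptions of $\vrcless{P_n}{r}$ and $\vrcleq{P_n}{r}$, and all three bulleted statements concerning the inclusion-induced maps and the rank $\qnl-1$ map on $H_{2l}(-;\F)$---reproduces verbatim the statement of Theorem~\ref{thm:main}.

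There is no genuine obstacle here. All of the substantive content---the winding-fraction computations via Definition~\ref{def:s,t}, the counting of periodic orbits and invariant sets of permanently fast points via Corollary~\ref{cor:orbit-counting} and Lemma~\ref{lem:num-crossings}, the application of Theorem~\ref{thm:homotopy-generic}, and the rank computation through $\hit(\iota_W)=\qnl$ and the cofinality argument of~\cite[Lemma~5.9]{AAR}---is already carried out in the proof of Theorem~\ref{thm:main-general}. The only facts to verify are the arithmetic identity $\gcd(n,2l+1)=2l+1$ and the inequality $n\ge 4l+2$, both immediate from $n=q(2l+1)$ with $q\ge2$, together with the applicability of Lemma~\ref{lem:monotonic-divide}. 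The proof is therefore a one-line reduction to the general theorem.
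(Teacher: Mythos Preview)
Your proposal is correct and matches the paper's approach exactly: the paper presents Theorem~\ref{thm:main} as a special case of Theorem~\ref{thm:main-general}, noting that it holds unconditionally by virtue of Lemma~\ref{lem:monotonic-divide}, and the substitution $\qnl=n/\gcd(n,2l+1)=n/(2l+1)=q$ is precisely what reduces the general statement to this one.
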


As a consequence, we can describe the persistent homology of $\vrc{P_n}{r}$ over the range of scale parameters $r\in(0,r_n)$, which we refer to as the \emph{restriction} of the persistent homology to $(0,r_n)$.

\begin{corollary}\label{cor:main}
Suppose $n=q(2l+1)$ is a multiple of $2l+1$. Then the restriction of the $(2l+1)$-dimensional persistent homology of $\vrcless{P_n}{r}$ (resp. $\vrcleq{P_n}{r}$) to $(0,r_n)$ consists of a single interval $(t_{n,l},s_{n,l+1}]$ (resp.\ $(t_{n,l},s_{n,l+1})$).

The restriction of the $2l$-dimensional persistent homology of $\vrcless{P_n}{r}$ to $(0,r_n)$ consists of $q-1$ intervals of the form $(s_{n,l},t_{n,l}]$. The restriction of the $2l$-dimensional persistent homology of $\vrcleq{P_n}{r}$ to $(0,r_n)$ consists of $q-1$ intervals of the form $[s_{n,l},t_{n,l}]$, as well as $2q$ points $[r,r]$ on the diagonal for every $s_{n,l}<r< t_{n,l}$, and $q$ points $[t_{n,l},t_{n,l}]$ on the diagonal.
\end{corollary}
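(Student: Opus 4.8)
The plan is to read the barcode directly off the pointwise homotopy types and the inclusion-induced maps recorded in Theorem~\ref{thm:main}. For $i\in\{2l,2l+1\}$ and $\ast\in\{<,\le\}$, write $M^{\ast}$ for the persistence module $r\mapsto H_i(\mathbf{VR}_\ast(P_n;r);\F)$ over $(0,r_n)$. Because each $\vrc{P_n}{r}$ is homotopy equivalent to a single sphere or to a wedge of finitely many equidimensional spheres (Theorem~\ref{thm:homotopy-generic}), every $M^{\ast}$ is pointwise finite-dimensional and hence decomposes into interval modules by the structure theory of real-parameter persistence modules \cite{chazal2016structure}. The barcode is therefore pinned down by two pieces of data, both furnished by Theorem~\ref{thm:main}: the pointwise dimension $\dim M^{\ast}(r)$, equal to the number of bars containing $r$, and the rank invariant $\rank\big(M^{\ast}(r)\to M^{\ast}(\tr)\big)$ for $r<\tr$, equal to the number of bars containing both. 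A bar that contributes to some $\dim M^{\ast}(r)$ but to no rank between distinct parameters is supported on a single point, i.e.\ is an \emph{ephemeral} (diagonal) summand in the sense of \cite{ChazalCrawley-BoeveyDeSilva}. The key auxiliary fact, used repeatedly, is that outside the $r$-windows identified in Theorem~\ref{thm:main} the winding fraction forces (via Definition~\ref{def:s,t} and Theorem~\ref{thm:homotopy-generic}) a homotopy type that is a sphere or wedge of spheres of a dimension other than $i$, so that $M^{\ast}(r)=0$ there.

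For the $(2l+1)$-dimensional modules and for the $<$-version of the $2l$-dimensional module the computation is immediate. In the odd case, Theorem~\ref{thm:main} gives $\vrcless{P_n}{r}\simeq S^{2l+1}$ on $t_{n,l}<r\le s_{n,l+1}$ with every inclusion in this window a homotopy equivalence, and $M^{<}$ vanishes just below $t_{n,l}$ and just above $s_{n,l+1}$; hence $M^{<}$ is the single interval module $(t_{n,l},s_{n,l+1}]$. The $\le$-version is identical except that at the right endpoint $\vrcleq{P_n}{s_{n,l+1}}$ already has winding fraction $\tfrac{l+1}{2l+3}$ and so is a wedge of $(2l+2)$-spheres, pulling the death inward to give $(t_{n,l},s_{n,l+1})$. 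For the even case with the $<$ convention, $\vrcless{P_n}{r}\simeq\bigvee^{q-1}S^{2l}$ on $s_{n,l}<r\le t_{n,l}$ with homotopy-equivalence inclusions and vanishing on either side, giving $q-1$ copies of $(s_{n,l},t_{n,l}]$.

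The delicate case, which I expect to be the main obstacle, is the $2l$-dimensional module with the $\le$ convention, where the ephemeral summands must be extracted. Here Theorem~\ref{thm:main} records pointwise dimensions $q-1$ at $s_{n,l}$, then $3q-1$ on the open interval, then $2q-1$ at $t_{n,l}$, with $M^{\le}$ vanishing outside $[s_{n,l},t_{n,l}]$, while every inclusion-induced map between two distinct parameters of $[s_{n,l},t_{n,l}]$ has rank exactly $q-1$. Since $\dim M^{\le}(s_{n,l})=q-1$ equals this common rank, the map out of $s_{n,l}$ is injective and its image survives to $t_{n,l}$, so there are exactly $q-1$ bars containing all of $[s_{n,l},t_{n,l}]$; as $M^{\le}$ vanishes immediately below $s_{n,l}$ and above $t_{n,l}$, these are precisely the $q-1$ closed intervals $[s_{n,l},t_{n,l}]$. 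Subtracting this rank-$(q-1)$ ``persistent part'' from the pointwise dimensions leaves $3q-1-(q-1)=2q$ classes at each interior $r$ and $2q-1-(q-1)=q$ classes at $t_{n,l}$ that contribute to no rank between distinct parameters; by the rank-invariant characterization these are supported on single points and hence are the ephemeral summands $[r,r]$ (with multiplicity $2q$) for $s_{n,l}<r<t_{n,l}$ and $[t_{n,l},t_{n,l}]$ (with multiplicity $q$). The care required is precisely in matching the pointwise-dimension mismatch against the rank invariant at the three regimes---the interior, the endpoint $t_{n,l}$, and the birth endpoint $s_{n,l}$---to confirm that the excess classes are genuinely diagonal rather than short bars of positive length.
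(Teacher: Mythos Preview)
The paper states this corollary without proof, treating it as a direct consequence of Theorem~\ref{thm:main}. Your argument is correct and is exactly the intended reading: you extract the barcode from the pointwise dimensions and inclusion ranks recorded there, invoking the interval decomposition for pointwise finite-dimensional modules and the rank-invariant characterization to separate the persistent $[s_{n,l},t_{n,l}]$ bars from the ephemeral summands in the $\le$ case.
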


\begin{corollary}\label{cor:main2}
When $n=(2k+1)!!$, then we can give a complete description of the persistent homology of $\vrc{P_n}{r}$ restricted to $(0,r_n)$.
\end{corollary}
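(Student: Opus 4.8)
The plan is to obtain the entire barcode on $(0,r_n)$ by stacking the per-level descriptions of Corollary~\ref{cor:main}, exploiting that the double factorial $n=(2k+1)!!$ is divisible by \emph{every} odd number $3,5,\ldots,2k+1$ (and by all of their products). Concretely, for each $l$ with $2l+1\mid n$ the hypothesis $n=\qnl(2l+1)$ of Theorem~\ref{thm:main} is met, Lemma~\ref{lem:monotonic-divide} discharges Conjecture~\ref{conj:monotonic}, and Remark~\ref{rem:expl-sl} makes the endpoints $s_{n,l}=2\sin(\tfrac{\pi l}{2l+1})\cos(\tfrac{\pi}{n})$ and $t_{n,l}=2\sin(\tfrac{\pi l}{2l+1})$ explicit. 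Thus at every such level the $2l$- and $(2l+1)$-dimensional persistent homology is known unconditionally and in closed form.

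First I would fix the range of active levels. By Lemma~\ref{lem:existence} a $(2l+1)$-star of radius $<r_n$ sits at every basepoint as soon as $n\ge 4l+2$, so the winding fraction $\wf(\vr{P_n}{r})$ attains each singular value $\tfrac{l}{2l+1}$ with $0\le l\le l^\ast:=\lfloor (n-2)/4\rfloor$ inside $(0,r_n)$, and attains none larger. Next I would read off, via Corollary~\ref{cor:main} (for divisor levels) and Corollary~\ref{cor:orbit-counting} together with Theorem~\ref{thm:main-general} (for the remaining levels), the barcode contributed by each $l$: a family of $2l$-dimensional bars spanning the singular window $[s_{n,l},t_{n,l}]$ and a single $(2l+1)$-dimensional bar spanning the adjacent generic window $(t_{n,l},s_{n,l+1})$, with the $\le$-convention additionally producing the ephemeral (diagonal) summands of Corollary~\ref{cor:main}. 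Finally I would invoke the monotone chain $0=t_{n,0}\le s_{n,1}\le t_{n,1}\le\cdots\le t_{n,l^\ast}$ to check that these singular and generic windows are consecutive and exhaust $(0,r_n)$: the top generic window is $(t_{n,l^\ast},r_n)$, on which $\tfrac{l^\ast}{2l^\ast+1}<\wf(\vr{P_n}{r})<\tfrac{l^\ast+1}{2l^\ast+3}$ forces $\vrc{P_n}{r}\simeq S^{2l^\ast+1}$ by Theorem~\ref{thm:homotopy-generic}, so there is no gap below $r_n$. Concatenating the per-level barcodes then yields the full persistent homology.

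The main obstacle is that the double factorial does not make \emph{every} active level a divisor level: for $k\ge 2$ one already finds $l=k+1$ active while $2(k+1)+1$ need not divide $n$ (e.g.\ $n=15$, $l=3$, $2l+1=7$), and further non-divisor levels appear as $n$ grows. At those levels Proposition~\ref{prop:vrchtpy} and Theorem~\ref{thm:homotopy-generic} still pin down the \emph{dimension} of the spheres (odd in the generic windows, even at the singular scales), but the \emph{count} of even spheres---and with it the exact multiplicity of the $2l$-dimensional bars---is only accessible through Corollary~\ref{cor:orbit-counting}, which rests on Conjecture~\ref{conj:monotonic}. Hence the description is genuinely complete and explicit at the divisor levels $2l+1\mid n$, which for the double factorial include all of the lowest-dimensional and most salient features, while its extension across the non-divisor levels up to $r_n$ is conditional on the conjecture; I would state the corollary with this dependence made explicit, exactly as flagged in Remark~\ref{rem:results}.
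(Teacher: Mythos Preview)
The paper gives no proof of this corollary; it is stated immediately after Corollary~\ref{cor:main} as if it were an automatic consequence, with Figure~\ref{fig:P15PersistentHomotopyRips} as illustration.  Your write-up is therefore more detailed than anything in the paper, and your overall strategy---assembling the barcode level by level from Theorem~\ref{thm:main}/Corollary~\ref{cor:main} at the divisor levels and from Theorem~\ref{thm:main-general} at the remaining ones---is the intended one.

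More importantly, you have put your finger on a genuine subtlety that the paper glosses over.  The double factorial $n=(2k+1)!!$ guarantees $2l+1\mid n$ for $l=1,\ldots,k$, but Lemma~\ref{lem:existence} forces further active levels $l>k$ with $s_{n,l}<r_n$ whenever $n\ge 4l+2$; your example $n=15$, $l=3$, $2l+1=7\nmid 15$ is exactly right, and the bound in the odd case of Lemma~\ref{lem:existence} gives $f_p(r_{15})>3$ strictly, so the $7$-star regime really does occur below $r_{15}$.  At those non-divisor levels the multiplicity of the even-dimensional bars comes from Corollary~\ref{cor:orbit-counting}, which rests on Conjecture~\ref{conj:monotonic}.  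So the ``complete description'' the corollary asserts is unconditional only at the divisor levels and is otherwise conditional on the conjecture, precisely as you say and as Remark~\ref{rem:results} warns in general.  Your proposal to state the corollary with this dependence made explicit is the honest reading; the paper's bare statement is at best elliptical on this point.
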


As an example, see Figure~\ref{fig:P15PersistentHomotopyRips} for the persistent homology of $\vrc{P_{15}}{r}$ in all homological dimensions.



\section{Vietoris--Rips complexes of subsets of a polygon}\label{sec:subsets}

As a second main result (Theorem~\ref{thm:secondary}), we study finite subsets of the regular polygons. We show the counterintuitive result that the Vietoris-Rips complex of an arbitrarily dense finite sample from $P_n$ can have the homotopy type of a wedge sum of an essentially \emph{arbitrary} number of even spheres. More precisely, suppose $n = q(2l+1)$ with $l\ge 1$ and $q\ge 2$. Theorem~\ref{thm:secondary}(ii) states that for any $\varepsilon>0$, $z\ge q$, and $s_{n,l}<r<t_{n,l}$, there is an $\varepsilon$-dense finite subset $X\subseteq P_n$ such that $\vrc{X}{r} \simeq \bigvee^{z-1} S^{2l}$. 

In Question~\ref{ques:Latschev-higher} we ask: for $M$ a Riemannian manifold, is it true that $\vrcless{X}{r}\simeq\vrcless{M}{r}$ for $X$ a sufficiently dense depending on $r$? This is known to be true for $r$ small by Latschev's theorem~\cite{Latschev2001}, and also for all scale parameters $r$ in the case when $M=S^1$ is the circle~\cite{AA-VRS1}. To our knowledge, this question is unknown for a general Riemannian manifold. Our Theorem~\ref{thm:secondary} shows that Question~\ref{ques:Latschev-higher} has a negative answer when $M$ is not Riemannian, for example if $M=P_n$ is a regular polygon equipped with the Euclidean metric.

Two lessons to be learned from Theorem~\ref{thm:secondary} are that higher-dimensional homology can be ubiquitous in Vietoris--Rips complexes of the most simple (even planar) shapes, and that the choice of metric (for example Euclidean versus Riemannian) can have large effects on the homotopy type of the resulting Vietoris--Rips complexes. 

\begin{definition}
A subset $X\subseteq P_n$ is defined to be \emph{$\varepsilon$-dense} if for each point $p\in P_n$, there exists some point $x\in X$ with $\norm{x-p}\le\varepsilon$.
\end{definition}

Result (i) in the theorem below shows that for $s_{n,l}<r<t_{n,l}$, any sufficiently dense sample of the regular polygon $P_n$ produces a Vietoris--Rips complex homotopy equivalent to \emph{some} wedge of $2l$-spheres. Result (ii) shows that for essentially any $z \in \N$, we can \emph{construct} an arbitrarily dense finite sample of $P_n$ whose Vietoris-Rips complex is an $z$-fold wedge sum of even-dimensional spheres. These results are analogous to~\cite[Theorems~7.1,7.2]{AAR} in the case of the ellipse, except our results here are more general in that they hold not only for $l=1$ (corresponding to 2-dimensional spheres), but also for higher dimensions $l\ge 1$ (giving $2l$-dimensional spheres).

\begin{theorem}\label{thm:secondary}
Suppose $n = q(2l+1)$ with $l\ge 1$ and $q\ge 2$.
\begin{enumerate}
\item[(i)] For any sufficiently dense finite sample $X\subseteq P_n$ and $s_{n,l}<r<t_{n,l}$, we have $\vrc{X}{r}\simeq \bigvee^{z-1}S^{2l}$ for some $z \geq q$.
\item[(ii)] For any $\varepsilon>0$, $z\ge q$, and $s_{n,l}<r<t_{n,l}$, there is an $\varepsilon$-dense finite subset $X\subseteq P_n$ such that $\vrc{X}{r} \simeq \bigvee^{z-1} S^{2l}$.
\end{enumerate}
\end{theorem}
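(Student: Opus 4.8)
The plan is to treat both parts via the winding fraction of the finite cyclic graph $\vr{X}{r}$ together with the orbit-counting formula of Proposition~\ref{prop:vrchtpy}. Note first that $n=q(2l+1)$ with $q\ge 2$ forces $n\ge 4l+2$, and that $s_{n,l}<r<t_{n,l}\le r_n$, so by Proposition~\ref{prop:connected} every finite $X\subseteq P_n$ gives a finite cyclic graph $\vr{X}{r}=\vr{P_n}{r}[X]$. The observation underlying everything, valid for \emph{every} finite $X$, is the upper bound $\wf(\vr{X}{r})\le\frac{l}{2l+1}$: since $\vr{X}{r}$ is a finite induced subgraph of $\vr{P_n}{r}$, Definition~\ref{defn:inf-cyc} bounds its winding fraction by $\wf(\vr{P_n}{r})$, which equals $\frac{l}{2l+1}$ on this range of $r$ by Definition~\ref{def:s,t} and the monotonicity of $\wf$ in $r$.

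For part (i) it remains to produce the matching lower bound for sufficiently dense $X$. Because $r>s_{n,l}$, Lemma~\ref{lem:existence} furnishes a basepoint whose inscribed $(2l+1)$-star has side length $s_{n,l}<r$; its $2l+1$ vertices are linked by steps of length strictly below $r$ and wind exactly $l$ times around $P_n$. Once $X$ is dense enough that each such vertex has a nearby sample point, continuity of the Euclidean distance produces a directed $(2l+1)$-cycle of winding number $l$ inside $\vr{X}{r}$; the induced subgraph on its vertices therefore has winding fraction $\ge\frac{l}{2l+1}$, so the squeeze gives $\wf(\vr{X}{r})=\frac{l}{2l+1}$ and Proposition~\ref{prop:vrchtpy} yields $\vrc{X}{r}\simeq\bigvee^{P-1}S^{2l}$ with $P$ the number of periodic orbits; set $z=P$. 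To see $z\ge q$, I would invoke the $q$-fold rotational symmetry of $P_n$: the $q$ rotationally-permuted invariant sets of fast points $I_0(r),\dots,I_{q-1}(r)$ appearing in the proof of Theorem~\ref{thm:main} are disjoint, and a sufficiently dense $X$ meets each of the $2l+1$ arcs comprising each $I_j(r)$, so the finite dynamics restricted to each $I_j(r)$ contributes at least one periodic orbit, giving $P\ge q$.

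For part (ii) I would construct $X$ by hand. Using Corollary~\ref{cor:orbit-counting} and the symmetry of $P_n$, pick $z$ distinct equilateral $(2l+1)$-stars $S^{(1)},\dots,S^{(z)}$, in general position, with side lengths in $(r-\eta,r)$ for $\eta$ small; putting all $z(2l+1)$ of their vertices into $X$ creates $z$ directed $(2l+1)$-cycles of winding number $l$, so $\wf(\vr{X}{r})=\frac{l}{2l+1}$ as above and there are at least $z$ periodic orbits. To achieve $\varepsilon$-density while pinning the orbit count at exactly $z$, I would then add filler points subject to two rules: first, for every star edge $u_i\to u_{i+1}$ leave the short arc $(u_{i+1},g_r(u_i)]_{P_n}$ empty, which keeps $f(u_i)=u_{i+1}$ and hence preserves each placed orbit — and since the side lengths are within $\eta$ of $r$ these forbidden arcs have length below $\varepsilon$ and abut star vertices already in $X$, so density is unharmed; second, place every filler point so that it is \emph{fast} in the sense of Definition~\ref{defn:fast-slow}, hence non-periodic and absorbed under $f$ into one of the $z$ chosen orbits. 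With only the $z$ stars periodic, Proposition~\ref{prop:vrchtpy} gives $\vrc{X}{r}\simeq\bigvee^{z-1}S^{2l}$.

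The hard part is the orbit-count bookkeeping of part (ii): simultaneously securing $\varepsilon$-density, preserving each chosen star as a genuine periodic orbit, and guaranteeing that the many filler points forced by density — especially those in the slow regions between stars — neither become periodic nor splice two stars into a new cycle. Checking that every filler point can be made fast and that fast points are absorbed into the chosen orbits rather than generating new ones is the delicate step; it is the finite-sample analogue of the permanently-fast / invariant-set analysis of~\cite{AAR} used in Theorem~\ref{thm:main}, and I would model it on the ellipse construction of~\cite[Theorem~7.2]{AAR}, upgrading that argument from $l=1$ to general $l$. The only other point needing care is the bound $z\ge q$ in part (i), which hinges on the robustness of the $q$ rotationally-symmetric fast invariant sets under passage to a dense finite sample.
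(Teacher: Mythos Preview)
Your overall strategy via Proposition~\ref{prop:vrchtpy} and the winding-fraction squeeze is the paper's, and your upper bound $\wf(\vr{X}{r})\le\frac{l}{2l+1}$ is exactly right. But your plan for part~(ii) contains a genuine conceptual reversal.

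You propose to make every filler point \emph{fast} in the sense of Definition~\ref{defn:fast-slow}, reasoning ``fast, hence non-periodic.'' That definition, however, concerns the infinite graph $\vr{P_n}{r}$; fastness there says nothing about periodicity in the finite graph $\vr{X}{r}$---indeed your own chosen star vertices lie in the fast region of $P_n$ yet are periodic in $X$. Moreover, $\varepsilon$-density forces points into the slow regions $(b_j,a_{j+1})_{P_n}$ around the polygon vertices, so your second rule cannot be obeyed. The paper's construction runs the other way. Slow points of $\vr{P_n}{r}$ are \emph{always} safe to add: a slow point $u$ has $\gamma_{2l+1}(u)<l$, and since the finite dynamics can only take steps that are also available in the infinite graph, $u$ can never close up a $(2l+1)$-cycle of winding number $l$ in any finite $X$. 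It is the \emph{fast}-region fillers that are dangerous. The paper handles them by placing each new fast point $y_1$ within a small $\delta$ (furnished by the continuity of $g_r$, Lemma~\ref{lem:g_r-cont-p}) of an already-placed orbit vertex $x_1$, so that $f(y_1)=x_2$ lands on that orbit; thus $y_1$ is absorbed rather than seeding a new cycle. The stars themselves are packed near $b_j$ inside each fast arc, and the forbidden gaps $(x_{i+1},g_r(x_i)]$ you correctly identify are shrunk by the same $\delta$-spacing so that density is not obstructed.

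For part~(i) your sketch of $z\ge q$ is in the right direction but not yet an argument: ``the finite dynamics restricted to each $I_j(r)$ contributes at least one periodic orbit'' is the conclusion, not the proof. The paper supplies a discrete intermediate-value step: take $X$ dense enough that each fast arc contains a sample point $x$ with $x\prec f^{2l+1}(x)$ and each adjacent slow arc contains a sample point $x'$ with $f^{2l+1}(x')\prec x'$; enumerating $[x,x']_X$ and locating the first index where fast switches to slow forces a fixed point of $f^{2l+1}$ in between. Doing this on each of the $n$ edges yields $n$ periodic points and hence at least $q$ orbits of length $2l+1$.
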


\begin{proof} [Proof of (i)] 
Throughout this proof, all the arithmetic operations on point indices will be taken modulo $n$. Also observe that we are back in the case of finite cyclic graphs, so the dynamical system $f$ (see Definition~\ref{defn:cyc-dyn-fin}) is a map $f\colon X \r X$.

As in the proof of Theorem~\ref{thm:main}, 
let $\{v_0,\ldots,v_{n-1}\}$ denote the vertices of $P_n$, 
and let $\{m_0,\ldots,m_{n-1}\}$ denote the midpoints of edges of $P_n$, ordered cyclically as
\[ v_0\prec m_0\prec v_1\prec m_1\prec\ldots\prec m_{n-2}\prec v_{n-1}\prec m_{n-1}\prec v_0 .\]
By Lemma~\ref{lem:monotonic-divide} and the intermediate value theorem, we get points
\[ v_j \prec a_j \prec m_j \prec b_j \prec v_{j+1}\prec v_j\]
for all $0\le j < n$, such that 
\begin{itemize}
\item $g_r(a_j)=a_{j+q}$ and $g_r(b_j)=b_{j+q}$ for all $0\le j<n$,
\item $\vr{P_n}{r}$ has $q$ invariant sets of permanently fast points
\[I^f_j:=(a_j,b_j)_{P_n}\cup(a_{j+q},b_{j+q})_{P_n}\cup\ldots\cup(a_{j+2lq},b_{j+2lq})_{P_n} \mbox{ for }0\le j<q\]
\item the points 
\[(b_j,a_{j+1})_{P_n}\cup(b_{j+q},a_{j+q+1})_{P_n}\cup\ldots\cup(b_{j+2lq},a_{j+2lq+1})_{P_n} \mbox{ for }0\le j<q\]
consist entirely of slow points of $\vr{P_n}{r}$.
\end{itemize}

Fix $0\leq j < n$.
Now take $X$ to be sufficiently dense so that there exist points $x \in (a_j,b_j)_{P_n}$ and $x' \in (b_j,a_{j+1})_{P_n}$ satisfying 
\[x \prec f^{2l+1}(x) \preceq f^{2l+1}(x') \prec x' \prec x.\]
Here the first $\prec$ relation holds (for $X$ sufficiently dense) because $x$ is a fast point of $\vr{P_n}{r}$, and the second $\prec$ relation holds because $x'$ is a slow point of $\vr{P_n}{r}$. 
We claim that there exists some point $z \in X$ such that $x \preceq z \preceq x' \prec x$ and $f^{2l+1}(z) = z$. Towards a contradiction, suppose this is not the case. 
Fix an enumeration $\{x_1,x_2,\ldots, x_k\}$ of all the points in $[x,x']_X$ so that 
\[x = x_1 \prec x_2 \prec \ldots \prec x_k = x' \prec x.\]
Now $x_1$ is a fast point of $f$ and $x_k$ is a slow point of $f$. Let $x_J$ be the slow point of $f$ in $ [x,x']_{X}$ with the smallest index in the counterclockwise order. Then $x_{J-1}$ is a fast point of $f$, and we have
\[ x_{J-1} \prec f^{2l+1}(x_{J-1}) \preceq f^{2l+1}(x_{J}) \prec x_J \prec x_{J-1}.\]
But we have already enumerated all the points in $[x,x']_X$, and the preceding line is a contradiction to $(x_{J-1},x_J)_X$ being empty. This proves the claim, giving a periodic point $z\in(a_j,a_{j+1})_{P_n}$ with $f^{2l+1}(z)=z$. 

Since $0\leq j < n$ was arbitrary, we obtain $n$ such periodic points, each having an orbit of length $2l+1$. Thus there are at least $q$ periodic orbits of length $2l+1$. An application of Proposition~\ref{prop:vrchtpy} now shows that $\vrc{X}{r}\simeq \bigvee^{z-1}S^{2l}$ for some $z \geq q$, proving (i). \qedhere
\end{proof}

We introduce some notation that will be useful for the proof of (ii). Recall from Definition~\ref{def:gv} that for any point $x \in P_n$, the point $g_{s_{2l+1}(x)}(x)$ is the vertex of the $(2l+1)$-star $S_{2l+1}(x)$ adjacent to $x$ in the counterclockwise direction. For convenience, we adopt the following notation: given $x_1 \in P_n$, write $x_1\prec x_2\prec \ldots\prec x_{2l+1}\prec x_1$ to denote the vertices of the star $S_{2l+1}(x)$. Note that by the symmetries of $P_n$ (with $n=q(2l+1)$), we have that $\|x_1-x'_1\|=\|x_i-x'_i\|$ for all $1\le i\le 2l+1$. 

\begin{figure}
\def\svgwidth{0.8\linewidth}
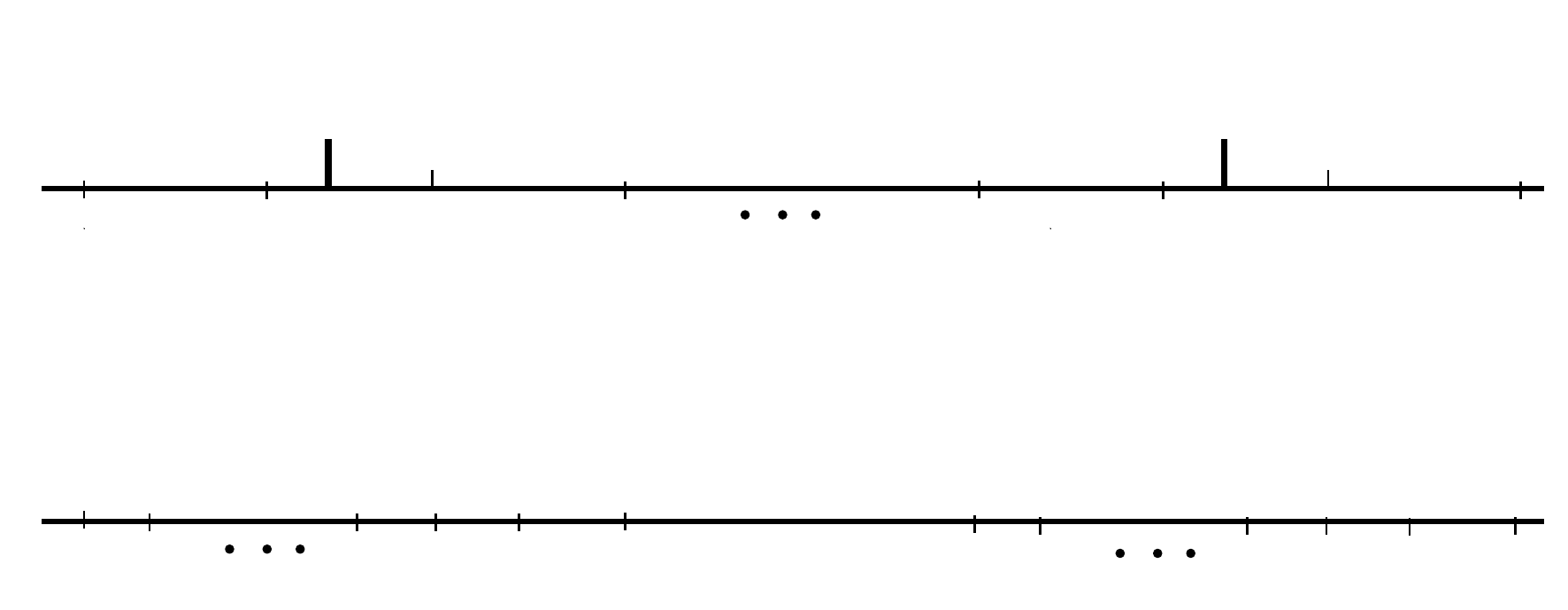
\caption{Figure for the proof of Theorem~\ref{thm:secondary}(ii)}
\label{fig:epsilon-density}
\end{figure}

\begin{proof}[Proof of (ii)]
The proof involves building up an $\varepsilon$-dense set $X$: the construction is illustrated in Figure~\ref{fig:epsilon-density}.
We use the setup of the proof of (i), with invariant sets of permanently fast points $I^f_j$ marked by $a_j,b_j$ terms, and with each $I^s_j$ consisting of slow points of $\vr{P_n}{r}$.
We construct $X$ in two steps: first we insert points in $X$ that will be periodic under $f\colon X\to X$, and then we append extra non-periodic points in order to to achieve $\varepsilon$-density. The periodic points will be arranged inside the regions $I^f_j$. Given $z\ge q$, we make an arbitrary choice of nonnegative integers $z_0,z_1,\ldots, z_{q-1}$ such that $\sum_{j=0}^{q-1} z_j = z$; we will construct $z_j$ periodic orbits inside each region $I^f_j$.

First let $x_1 \in (a_0,b_0)_{P_n}$ be such that $\norm{x_1 - b_0} < \varepsilon$, and add $\{x_1, x_2,\ldots, x_{2l+1}\}$ to $X$. Since $x_1$ is in an invariant set of permanently fast points for $\vr{P_n}{r}$, we know that for all $1\le j\le 2l+1$, we have $\gpn_r(x_j) \in (x_{j+1}, \gpn_r(b_j))_{P_n} = (x_{j+1}, b_{j+q})_{P_n}$, where in particular $\gpn_r(x_{2l+1}) \in (x_1,b_0)_{P_n}$.
We do not add to $X$ any points in the set
\[(x_2,\gpn_r(x_1)]_{P_n} \cup 
(x_3,\gpn_r(x_2)]_{P_n} \cup \ldots \cup
(x_1,\gpn_r(x_{2l+1})]_{P_n}. \]
Then we necessarily have $f(x_1) = x_2$, $f(x_2) = x_3$, \ldots, and $f(x_{2l+1}) = x_1$. This gives us our first periodic orbit in $X$. 

Next let $x'_1\in (\gpn_r(x_{2l+1}),b_0)_{P_n}$ be such that 
\[x'_2 \in (\gpn_r(x_1),b_q)_{P_n}, \,
x'_3 \in (\gpn_r(x_2),b_{2q})_{P_n}, \ldots, \,
x'_{2l+1} \in (\gpn_r(x_{2l}),b_{2lq})_{P_n}.\] 
Repeating the process above, we get another periodic orbit $\{x'_j\}_{j=1}^{2l+1}$. We iterate this process $z_1$ times to get $z_1$ periodic orbits inside $I^f_1$. 

The next step is to attain $\varepsilon$-density inside $I^f_1$. 
We first make a preliminary observation: by continuity of $g_r$ (Lemma~\ref{lem:g_r-cont-p}), for $z \in (a_0, x_1)_{P_n}$ such that $\norm{z - x_1}$ is small enough, we have $\gpn_r(z) \in (x_2, \gpn_r(x_1))_{P_n}$.

Now let $u_1 \in (a_0,b_0)_{P_n}$ be such that $\norm{a_0-u_1} < \varepsilon$. By a compactness argument, there exists a $0 < \delta < \frac{\epsilon}{2}$ such that for all $y_1 \in [u_1,x_1]_{P_n}$ and $\tilde{y} \in (a_0, y_1)_{P_n}$ with $\norm{y-\tilde{y}} < \delta$, we have $\gpn_r(\tilde{y}) \in (y_2, \gpn_r(y_{1}))_{P_n}$.

Now let $y_1 \in (a_0,x_1)_{P_n}$ be such that $\norm{y_1 - x_1}  = \delta$. Add $y_1,\ldots, y_{2l+1}$ to $X$. By the choices made above, we know $f(y_1) = x_2 \neq y_2$, and likewise for the points $f(y_2)$, $f(y_3)$, and so on. Thus adding $\{y_j\}_{j=1}^{2l+1}$ to $X$ does not add a periodic orbit. We iterate this construction until we achieve $\varepsilon$-density in the segments $[u_j,x_j]_{P_n}$ for $1\leq j \leq 2l+1$. This achieves $\varepsilon$-density in $I^f_1$.

We repeat the process of adding periodic orbits and then non-periodic orbits to achieve $\varepsilon$-density for all $\{I^f_j\}_{j=0}^{q-1}$. This yields $z = \sum_{j=0}^{q-1} z_j$ periodic orbits inside $X$. 

Finally, we add sufficiently many points from the slow regions $\{I^s_j\}_{j=0}^{q-1}$ to get $X$ to be $\varepsilon$-dense in $P_n$. Adding points in the slow regions of $\vr{P_n}{r}$ cannot create any new periodic orbits, and so we are left with precisely $z$ periodic orbits. An application of Proposition~\ref{prop:vrchtpy} now completes the proof of (ii). \qedhere

\end{proof}

\begin{remark}
If Conjecture~\ref{conj:monotonic} is true, then a version of Theorem~\ref{thm:secondary} will also be true for $n$ not necessarily a multiple of $2l+1$. This proof would, in addition to the continuity of Lemma~\ref{lem:g_r-cont-p}, also require the continuity of Lemma~\ref{lem:G}.
\end{remark}

\section{Topological Bounds on the Gromov--Hausdorff distance between $P_n$ and $S^1$}\label{sec:GH}

In this section we explore how close the lower bound that comes from persistent homology on the Gromov--Hausdorff distance between $P_n$ and $S^1$ is to being tight. Here both $P_n$ and $S^1$ are equipped with the Euclidean distance.

We start by defining some concepts. Let $(Z,d_Z)$ be a metric space. The \emph{Hausdorff distance}~\cite{burago} between any two closed subsets $X,Y \subseteq Z$ is defined as
\[ d_H^Z(X,Y) = \max(\sup_{x\in X}\inf_{y\in Y}d_Z(x,y), \sup_{y\in Y}\inf_{x\in X}d_Z(x,y)). \]

The Gromov-Hausdorff distance is a generalization of the Hausdorff distance that measures the dissimilarity between metric spaces, while accounting for possible realignment. The \emph{Gromov-Hausdorff distance}~\cite{burago} between two metric spaces $(X,d_X)$ and $(Y,d_Y)$ is defined as
\begin{align*}
d_{GH}(X,Y) = \inf_{Z,f,g} \{d_H^Z(f(X),g(Y))~|~Z \text{ a metric space, } f\colon X \to Z, \, g\colon Y \to Z \text{ isometric embeddings} \}.
\end{align*}

Note that if $X$ and $Y$ are metric spaces equipped with isometric embeddings into a common metric space $Z$, then by definition we have
\begin{equation}\label{eq:H-GH}
d_{GH}(X,Y)\le d_H^Z(X,Y).
\end{equation}

\begin{example}\label{ex:H-dist}
Let $S^1$ be the circle of unit radius in $\R^2$ about the origin, and let $P_n$ be an inscribed regular polygon with $n$ sides. We equip both sets with the Euclidean metric.

\begin{center}
\def\svgwidth{0.3\linewidth}
\begingroup%
  \makeatletter%
  \providecommand\color[2][]{%
    \errmessage{(Inkscape) Color is used for the text in Inkscape, but the package 'color.sty' is not loaded}%
    \renewcommand\color[2][]{}%
  }%
  \providecommand\transparent[1]{%
    \errmessage{(Inkscape) Transparency is used (non-zero) for the text in Inkscape, but the package 'transparent.sty' is not loaded}%
    \renewcommand\transparent[1]{}%
  }%
  \providecommand\rotatebox[2]{#2}%
  \newcommand*\fsize{\dimexpr\f@size pt\relax}%
  \newcommand*\lineheight[1]{\fontsize{\fsize}{#1\fsize}\selectfont}%
  \ifx\svgwidth\undefined%
    \setlength{\unitlength}{216.12756684bp}%
    \ifx\svgscale\undefined%
      \relax%
    \else%
      \setlength{\unitlength}{\unitlength * \real{\svgscale}}%
    \fi%
  \else%
    \setlength{\unitlength}{\svgwidth}%
  \fi%
  \global\let\svgwidth\undefined%
  \global\let\svgscale\undefined%
  \makeatother%
  \begin{picture}(1,1)%
    \lineheight{1}%
    \setlength\tabcolsep{0pt}%
    \put(0,0){\includegraphics[width=\unitlength,page=1]{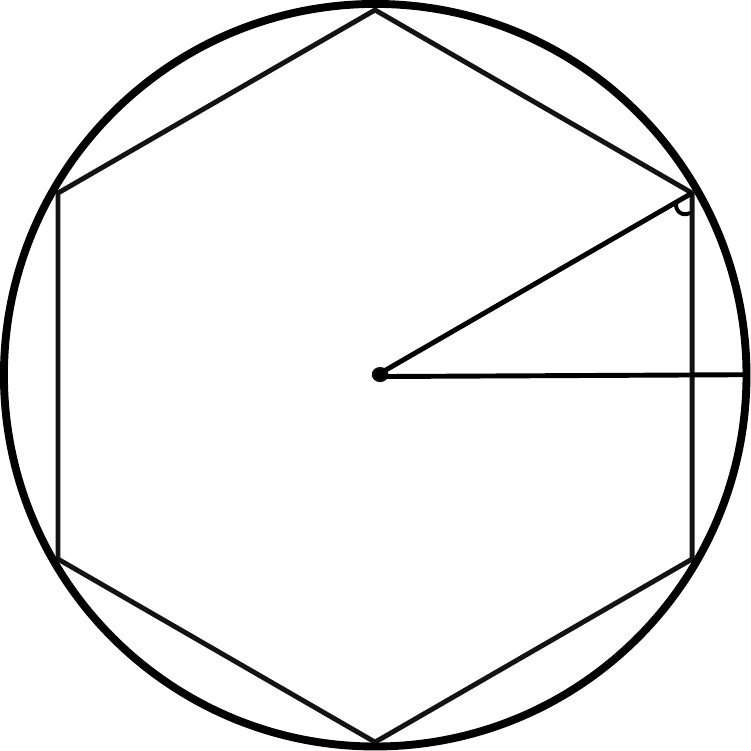}}%
    \put(0.86585878,0.63570795){\color[rgb]{0,0,0}\makebox(0,0)[lt]{\lineheight{1.25}\smash{\begin{tabular}[t]{l}$\theta$\end{tabular}}}}%
  \end{picture}%
\endgroup%

\end{center}
Each interior angle of $P_n$ is $\frac{\pi(n-2)}{n}$, and so we have $\theta = \frac{\pi(n-2)}{2n}$ in the figure. One can compute
\[d_H^{\R^2}(P_n,S^1) = 1 - \sin\bigl(\tfrac{\pi(n-2)}{2n}\bigr) = 1 - \cos(\tfrac{\pi}{n}). \]
In particular, by \eqref{eq:H-GH} we have \begin{equation}\label{ex:H-dist-Pn}
d_{GH}(P_n,S^1) \leq 1 - \cos(\tfrac{\pi}{n}). 
\end{equation}
\end{example}

We conjecture that the upper bound is tight.
\begin{conjecture}
We conjecture that $d_{GH}(P_n,S^1)=d_H^{\R^2}(P_n,S^1)=1-\cos(\tfrac{\pi}{n})$.
\end{conjecture}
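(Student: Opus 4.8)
The upper bound $d_{GH}(P_n,S^1)\le 1-\cos(\tfrac{\pi}{n})$ is already established in \eqref{ex:H-dist-Pn}, so the entire content of the conjecture is the matching lower bound $d_{GH}(P_n,S^1)\ge 1-\cos(\tfrac{\pi}{n})$. The plan is to work with the correspondence formulation $d_{GH}(P_n,S^1)=\tfrac12\inf_R \mathrm{dis}(R)$, where $R\subseteq P_n\times S^1$ ranges over correspondences and $\mathrm{dis}(R)=\sup\{\,|\,\|p-p'\|-\|y-y'\|\,| : (p,y),(p',y')\in R\,\}$. Thus it suffices to show that \emph{every} correspondence $R$ satisfies $\mathrm{dis}(R)\ge 2\bigl(1-\cos(\tfrac{\pi}{n})\bigr)$. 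The natural radial correspondence already attains this value: two antipodal edge-midpoints of $P_n$ lie at Euclidean distance $2\cos(\tfrac{\pi}{n})$, whereas their radial images are antipodal on $S^1$ at distance $2$, contributing exactly $2-2\cos(\tfrac{\pi}{n})$ to the distortion. The lower bound therefore amounts to proving that no correspondence can avoid an equally bad pair.

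The obstruction I would exploit is the tension between \emph{cyclic position} and \emph{Euclidean distance} in $P_n$: a pair of antipodal edge-midpoints is maximally far apart in the cyclic order yet only $2\cos(\tfrac{\pi}{n})$ apart in $\R^2$, while on $S^1$ maximal cyclic separation forces Euclidean distance near the diameter $2$. Concretely, first restrict $R$ to the $n$ vertices $\{v_0,\dots,v_{n-1}\}$, which sit \emph{isometrically} in both $P_n$ and $S^1$. If $\mathrm{dis}(R)=\delta$, then the corresponding points $y_0,\dots,y_{n-1}\in S^1$ form a $\delta$-approximate copy of $n$ equally spaced points; a stability-of-approximate-isometries argument should then show the $y_i$ are close (with error controlled by $\delta$) to $n$ genuinely equally spaced points of $S^1$, up to a rotation or reflection. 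In particular, for $n$ even, the antipodal vertices $v_0,v_{n/2}$ (at distance $2$) map to points $y_0,y_{n/2}$ with $\|y_0-y_{n/2}\|\ge 2-\delta$, forcing $y_0,y_{n/2}$ to be nearly antipodal on $S^1$.

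The second step locates the two antipodal edge-midpoints $m_0,m_{n/2}$, at distance $\|m_0-m_{n/2}\|=2\cos(\tfrac{\pi}{n})$, and tracks their images $y_0',y_{n/2}'$. Since each midpoint's distances to its two adjacent vertices (each equal to $\sin(\tfrac{\pi}{n})$) are preserved up to $\delta$, the image $y_j'$ is pinned near the short arc between $y_j$ and $y_{j+1}$; combined with the near-equal-spacing of the $y_i$, the images $y_0'$ and $y_{n/2}'$ are forced nearly antipodal, so $\|y_0'-y_{n/2}'\|\ge 2-E(\delta)$ for some error $E(\delta)\to 0$ as $\delta\to0$. Feeding this pair into the definition of distortion yields $\delta\ge \|y_0'-y_{n/2}'\|-\|m_0-m_{n/2}\|\ge 2-2\cos(\tfrac{\pi}{n})-E(\delta)$, which recovers the conjectured bound up to the error term and, as $n\to\infty$, the correct order $\pi^2/(2n^2)$. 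For odd $n$ one runs the same argument with a vertex and the opposite edge-midpoint in place of two antipodal midpoints.

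The genuine difficulty---and the reason this remains a conjecture---is eliminating the error term $E(\delta)$ so as to obtain the \emph{exact} constant $2(1-\cos(\tfrac{\pi}{n}))$ rather than a bound weaker by a constant factor. There are two compounding sources of slack. First, purely local rearrangements of a correspondence cost only $O(n^{-3})$ in distortion (the discrepancy between a chord $2\sin(\tfrac{\pi}{n})$ and the corresponding arc-midpoint configuration on $S^1$), which is asymptotically negligible against the target $1-\cos(\tfrac{\pi}{n})=O(n^{-2})$; hence the bound must be extracted from a \emph{global} feature of the correspondence, and a monotone degree-one correspondence is free to redistribute arc length so that combinatorial antipodality need not coincide with metric antipodality. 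Second, the map from chord length to angular position degenerates with a square-root singularity near the diameter, so near-antipodal \emph{metric} data gives only $O(\sqrt\delta)$ positional control; sharpening the stability step to beat this degeneracy---perhaps via an averaging or integral-geometric argument over all antipodal pairs simultaneously, or by ruling out non-radial alignments directly---is the main obstacle. As a rigorous but strictly weaker partial result, one can instead combine the stability theorem with the persistent homology computed in Corollary~\ref{cor:main} to obtain a bottleneck-distance lower bound, which is precisely why the sharp value is stated here only as a conjecture.
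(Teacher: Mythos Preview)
The statement you are addressing is stated in the paper as an open \emph{conjecture}, not as a theorem; the paper gives no proof of it. Immediately after stating the conjecture the authors write ``To our knowledge this conjecture is not known, and therefore we search for useful lower bounds,'' and what follows is only the weaker bound
\[
\tfrac{\sqrt{3}}{2}\bigl(1-\cos(\tfrac{\pi}{n})\bigr)\ \le\ d_{GH}(P_n,S^1)\ \le\ 1-\cos(\tfrac{\pi}{n}),
\]
obtained from the stability inequality $d_B(\PH_1^{\VR}(P_n),\PH_1^{\VR}(S^1))\le 2d_{GH}(P_n,S^1)$ together with the barcode computation. So there is no ``paper's own proof'' to compare against.

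Your write-up is consistent with this: you correctly isolate the lower bound as the content of the conjecture, outline a plausible attack via the correspondence/distortion formulation, and then explicitly identify the slack (the $E(\delta)$ term and the square-root degeneracy near the diameter) that prevents the argument from closing. You also note the persistent-homology bound as the rigorous fallback, which is exactly what the paper actually proves. In short, neither you nor the paper proves the conjecture; your proposal is an honest sketch of an approach together with a diagnosis of the obstruction, not a proof, and you should present it as such rather than as a proof attempt.
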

To our knowledge this conjecture is not known, and therefore we search for useful lower bounds.

For $X$ a metric space, let $\PH_i^{\VR}(X)$ denote the $i$-dimensional persistent homology barcodes of $\vr{X}{r}$ over all scale parameters $r$. 
By stability of the persistent homology of Vietoris-Rips complexes \cite{ChazalDeSilvaOudot2013}, we know that for compact metric spaces $X$ and $Y$, we have
\begin{equation}\label{eq:B-GH}
d_B(\PH_i^{\VR}(X),\PH_i^{\VR}(Y))\le 2d_{GH}(X,Y).
\end{equation}

\begin{example}
For any integer $n \ge 6$ divisible by three, consider the barcodes for the 1-dimensional persistent homology of $P_n$. There is one bar in $\PH_1^{\VR}(P_n)$, with birth time $0$ and death time $s_{n,1}=\sqrt{3}\cos(\frac{\pi}{n})$ (Remark~\ref{rem:expl-sl}). Also, if $S^1$ is equipped with the Euclidean metric, then there is one bar in $\PH_1^{\VR}(S^1)$, with birth time $0$ and death time $\sqrt{3}$.
Therefore,
\[ d_B(\PH_1^{\VR}(P_n),\PH_1^{\VR}(S^1)) = \sqrt{3}(1-\cos(\tfrac{\pi}{n})). \]
We make a brief remark about the preceding calculation: the equality follows by noting that the bottleneck distance is achieved by simply matching the two nontrivial bars, and not matching with any ``trivial'' bars. We point the reader to~\cite{chazal2016structure} for details on bottleneck distance computations.

Combining the preceding observation with~\eqref{ex:H-dist-Pn} and \eqref{eq:B-GH}, we get the bounds
\[ \tfrac{\sqrt{3}}{2}(1-\cos(\tfrac{\pi}{n})) \le d_{GH}(P_n,S^1) \le 1-\cos(\tfrac{\pi}{n}). \]
This gives us a reasonably tight range on the Gromov-Hausdorff distance, for example, $d_{GH}(P_6,S^1) \in [0.116,0.134]$. Note that the upper and lower bounds both go to 0 as $n \to \infty$.
\end{example}

Now we compare these topology-driven lower bounds to those we could obtain purely via tools in metric geometry. We start with a reformulation of the Gromov-Hausdorff distance. 

Given any two sets $X$ and $Y$, a \emph{correspondence} is a subset $R\subseteq X\times Y$ such that $\pi_X(R) = X$ and $\pi_Y(R) = Y$, where the $\pi_\bullet$ maps denote the canonical projections. Let $\mathcal{R}(X,Y)$ denote the collection of all correspondences between $X$ and $Y$. The Gromov-Hausdorff distance between two metric spaces $(X,d_X), (Y,d_Y)$ can be formulated \cite{burago} as follows:
\begin{align*}
d_{GH}(X,Y) = \frac{1}{2}\inf_{R \in \mathcal{R}(X,Y)}\sup \{| d_X(x,x') - d_Y(y,y')|~|~(x,y),(x',y') \in R\}.
\end{align*}

Via the results in \cite{memoli2012some}, one obtains the following lower bounds for $d_{GH}$:
\begin{align}
d_H^{\R}(\image(d_X),\image(d_Y)) 
&\leq 
\inf_{R\in \mathcal{R}(X,Y)}\sup_{(x,y)\in R} d_H^{\R}(\image(d_X(x,\cdot), \image(d_Y(y,\cdot))
\label{eq:lb-weak}
\\
& \leq 2d_{GH}(X,Y). \label{eq:lb-strong}
\end{align}

Here $\image(d_X)$, for example, denotes the image of the map $d_X\colon X\times X\to \R$. 
We now compare the lower bounds for $d_{GH}(P_n,S^1)$ obtained via these purely metric geometry tools with those obtained via the contributions of the current paper. We temporarily adopt the notation $d_{P_n}$ and $d_{S^1}$ to denote the restrictions of the Euclidean distance to $P_n$ and $S^1$, respectively.

For any even $n\geq 6$, we know that $\diam(P_n) = 2$. Thus $\image(d_{P_n}) = [0,2] = \image(d_{S^1})$. This shows that the lower bound given by the left hand side of~\eqref{eq:lb-weak} does not discriminate between $P_n$ and $S^1$ in the even case. 

Next we test the lower bound given by~\eqref{eq:lb-strong}. Let $n \geq 6$ be even. Fix the correspondence $R$ given by matching the unique points of $P_n$ and $S^1$ that lie on a ray based at the center of $S^1$. Let $m \in P_n$ be the midpoint of an edge of $P_n$. Let $v$ denote one of the vertices of $P_n$ lying on the edge opposite $m$. Then $\norm{v - m} = \sqrt{1 + 3\cos^2(\pi/n)} = \max\image(d_{P_n}(m,\cdot))$. 

Let $p \in S^1$ be such that $(m,p) \in R$. Note that $\image(d_{S^1}(x,\cdot)) = [0,2]$. By symmetry considerations, we know that $\sup_{(x,y) \in R}d_H^{\R}(\image(d_{P_n}(x,\cdot), \image(d_{S^1}(y,\cdot))$ is achieved by $d_H^{\R}(\image(d_{P_n}(m,\cdot), \image(d_{S^1}(p,\cdot))$. Thus we have
\begin{align*}
\inf_{S\in \mathcal{R}(P_n,S^1)}\sup_{(x,y)\in S} d_H^{\R}(\image(d_{P_n}(x,\cdot), \image(d_{S^1}(y,\cdot)) 
&\leq d_H^{\R}(\image(d_{P_n}(m,\cdot), \image(d_{S^1}(p,\cdot))\\
&= d_H^{\R}([0,\sqrt{1 + 3\cos^2(\pi/n)}],[0,2]).
\end{align*}
In the case $n = 6$, we have $\sqrt{1 + 3\cos^2(\pi/n)} = \sqrt{13}/2$, and so the right hand side above evaluates to $2 - \sqrt{13}/2$. Dividing by 2, we get a lower bound of at most $0.0986$ on the Gromov-Hausdorff distance between $P_n$ and $S^1$. Thus the lower bound obtained via~\eqref{eq:lb-strong} is strictly weaker than the bound of 0.116 obtained via persistent homology considerations.

\section{Conclusion}

We end with some open questions motivated by this work.

\begin{question}
When the graph $\vr{P_n}{r}$ is not cyclic, what can be said about the homotopy type of the simplicial complex $\vrc{P_n}{r}$?
\end{question}

\begin{question}
For $l>1$ and $n$ not a multiple of $P_n$, what is an analytic formula for $s_{n,l}$ and $t_{n,l}$? This question is related to proving Conjecture~\ref{conj:monotonic} in the cases where it remains open.
\end{question}

Let $V=S^1$ be the circle of unit radius, equipped with the Euclidean metric. It is necessary that $s_{n,l}\to s_{V,l}$ and $t_{n,l}\to t_{V,l}$ as $n\to \infty$, by virtue of the fact that $P_n$ converges to $S^1$ as $n\to\infty$ (for example in the Hausdorff distance).

As explained in the introduction, we think of the following question as an analogue of Latschev's Theorem~\cite{Latschev2001} at higher scale parameters.

\begin{question}\label{ques:Latschev-higher}
Let $M$ be a Riemannian manifold, and let scale $r>0$ be arbitrary. Is it true that for $X\subseteq M$ sufficiently dense (depending on $M$ and $r$), we have a homotopy equivalence $\vrcless{X}{r}\simeq\vrcless{M}{r}$?
\end{question}

Latschev's Theorem provides a positive answer when $r$ is sufficiently small depending on $M$, and~\cite{AA-VRS1} provides a positive answer at all scales $r$ when $M=S^1$ is the circle. Related questions for the fundamental group are considered in~\cite{virk2017approximations}. Our Theorem~\ref{thm:secondary}(ii) provides a negative answer to Question~\ref{ques:Latschev-higher} if the assumption that $M$ is a Riemmanian manifold is removed.

\section{Acknowledgements}

We thank Bowen Li and Zhi Li from Colorado State University for their proof of Lemma~\ref{lem:technical-inequality}. This material is based upon work supported by the National Science Foundation under Grant No.\ DMS-1439786 while the authors were in residence at the Institute for Computational and Experimental Research in Mathematics in Providence, RI, during the Summer\MVAt ICERM 2017 program. While in residence at Summer\MVAt ICERM 2017, Bonginkosi Sibanda was also supported by The Karen T.\ Romer Undergraduate Teaching and Research Awards.

\bibliographystyle{plain}
\bibliography{VRofPolygon}

\appendix

\section{Proofs of technical lemmas}

\begin{lemma}\label{lem:technical-inequality}
For all integers $n\ge 4$ we have $\frac{n-1}{n+1}<\cos(\frac{\pi}{n})$.
\end{lemma}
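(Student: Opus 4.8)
The plan is to replace the transcendental quantity $\cos(\pi/n)$ by a simple rational lower bound, thereby reducing the statement to an elementary polynomial inequality in $n$. First I would recall the standard estimate $\cos x \ge 1 - \tfrac{x^2}{2}$, valid for all real $x$ with equality only at $x=0$. This follows because $f(x) = \cos x - 1 + \tfrac{x^2}{2}$ satisfies $f(0)=0$ and $f'(x) = x - \sin x \ge 0$ for $x \ge 0$, so $f$ is nonnegative on $[0,\infty)$ (and even). Applying this with $x = \pi/n$, which is strictly positive for $n \ge 4$, yields the strict bound
\[ \cos(\pi/n) > 1 - \frac{\pi^2}{2n^2}. \]

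Next I would rewrite the target fraction as $\tfrac{n-1}{n+1} = 1 - \tfrac{2}{n+1}$. In view of the strict bound above, it then suffices to prove the weaker (non-strict) inequality $1 - \tfrac{\pi^2}{2n^2} \ge 1 - \tfrac{2}{n+1}$, that is, $\tfrac{2}{n+1} \ge \tfrac{\pi^2}{2n^2}$. Clearing denominators (both sides being positive) turns this into the polynomial inequality
\[ 4n^2 \ge \pi^2(n+1), \qquad\text{i.e.}\qquad 4n^2 - \pi^2 n - \pi^2 \ge 0. \]

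Finally I would verify this last inequality for every integer $n \ge 4$. Using the crude bound $\pi^2 < 10$, it is enough to check $4n^2 - 10n - 10 > 0$ on $[4,\infty)$: the left-hand side equals $14$ at $n=4$ and has derivative $8n - 10 > 0$ there, so it is positive throughout. (Equivalently, the larger root of $4x^2 - \pi^2 x - \pi^2$ is roughly $3.23 < 4$.) Chaining the two displayed inequalities then gives $\cos(\pi/n) > \tfrac{n-1}{n+1}$, as desired.

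The argument is entirely routine, so I do not anticipate a genuine obstacle. The only point requiring a little care is that both sides of the original inequality tend to $1$ as $n \to \infty$, so the polynomial lower bound on $\cos$ must be strong enough to capture that the gap $\cos(\pi/n) - \tfrac{n-1}{n+1} \sim \tfrac{2}{n}$ stays positive all the way down to the endpoint $n=4$; the quadratic estimate $\cos x \ge 1 - \tfrac{x^2}{2}$ is comfortably sharp enough for this, as the check at $n=4$ confirms.
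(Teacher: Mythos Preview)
Your proof is correct and is in fact cleaner than the paper's. The paper proceeds differently: it sets $f(x)=\cos(\pi/x)-\tfrac{x-1}{x+1}$, computes $f'(x)$, bounds the numerator using $\sin(\pi/x)\le\pi/x$ to show $f'(x)<0$ for $x\ge 7$, and then combines $\lim_{x\to\infty}f(x)=0$ with this monotonicity to get $f(x)>0$ for $x\ge 7$, checking $n=4,5,6$ by hand. Your route---replacing $\cos(\pi/n)$ by the lower bound $1-\tfrac{\pi^2}{2n^2}$ and reducing everything to the polynomial inequality $4n^2\ge\pi^2(n+1)$---avoids the derivative of the difference function entirely and handles all $n\ge4$ uniformly, with no residual case-check. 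The paper's approach has the mild conceptual advantage of exhibiting the difference as a function decreasing to zero, but your argument is shorter and more self-contained.
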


\begin{proof}
Let $f\colon (0,\infty)\to\R$ be defined by $f(x)=\cos(\frac{\pi}{x})-\frac{x-1}{x+1}$. We compute
\[ f'(x)=\frac{\pi\sin(\frac{\pi}{x})(x+1)^2-2x^2}{x^2(x+1)^2}. \]
Note the denominator of $f'(x)$ is positive for all $x\in(0,\infty)$. To get a bound on the numerator, we observe that $\sin(\frac{\pi}{x})\le\frac{\pi}{x}$ gives
\[ \pi\sin(\tfrac{\pi}{x})(x+1)^2-2x^2 \le \tfrac{\pi^2}{x}(x+1)^2-2x^2:=g(x). \]
Since $g(7)<0$ and $g'(x)=\pi^2(1-\frac{1}{x^2})-4x\le \pi^2-4x<0$ for all $x\ge 7$, we get that $g(x)<0$ for all $x\ge 7$ and hence $f'(x)<0$ for all $x\ge 7$. Since $\lim_{x\to\infty}f(x)=0$, this implies that $f(x)>0$ (and hence $\frac{x-1}{x+1}<\cos(\frac{\pi}{x})$) for all $x\ge 7$. We now confirm the remaining cases $n=4,5,6$ individually in order to obtain $\frac{n-1}{n+1}<\cos(\frac{\pi}{n})$ for all integers $n\ge 4$.
\end{proof}

\begin{lemma}\label{lem:sin-cos-inequality}
The inequality $\sin(2\pi/x) \le \cos(\pi/x)$ holds for all $x \ge 6$, with equality if and only if $x = 6$.
\end{lemma}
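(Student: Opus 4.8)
The plan is to reduce this to an elementary fact about the sine function via the substitution $\theta=\pi/x$. Setting $\theta=\pi/x$, the hypothesis $x\ge 6$ translates to $0<\theta\le\pi/6$, and the claimed inequality $\sin(2\pi/x)\le\cos(\pi/x)$ becomes $\sin(2\theta)\le\cos\theta$, with the equality case $x=6$ corresponding to $\theta=\pi/6$.

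Next I would apply the double-angle identity $\sin(2\theta)=2\sin\theta\cos\theta$, rewriting the target inequality as $2\sin\theta\cos\theta\le\cos\theta$. The key observation is that on the interval $0<\theta\le\pi/6$ we have $0<\theta<\pi/2$, so $\cos\theta>0$ and we may divide both sides by $\cos\theta$ without reversing the inequality. This leaves the equivalent inequality $2\sin\theta\le 1$, i.e.\ $\sin\theta\le\tfrac12$.

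Finally I would invoke the monotonicity of $\sin$ on $[0,\pi/2]$ together with the value $\sin(\pi/6)=\tfrac12$. Since $\sin$ is strictly increasing there and $0<\theta\le\pi/6$, we obtain $\sin\theta\le\sin(\pi/6)=\tfrac12$, with equality if and only if $\theta=\pi/6$. Tracing this back through the substitution yields $\sin(2\pi/x)\le\cos(\pi/x)$ for all $x\ge 6$, with equality exactly when $x=6$, as desired.

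Honestly, there is no real obstacle here: the whole lemma collapses to $\sin\theta\le\tfrac12$ once one divides out the (positive) factor $\cos\theta$. The only point requiring a moment's care is checking that $\cos\theta>0$ so that the division is valid and sense-preserving, which is immediate from $\theta\le\pi/6<\pi/2$; everything else is a direct consequence of the monotonicity of $\sin$.
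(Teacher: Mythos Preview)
Your proof is correct and in fact cleaner than the paper's. The paper argues by calculus: it sets $f(x)=\cos(\pi/x)-\sin(2\pi/x)$, checks $f(6)=0$, and computes $f'(x)=\frac{\pi}{x^2}\sin(\pi/x)+\frac{2\pi}{x^2}\cos(2\pi/x)>0$ for $x\ge 6$ to conclude $f(x)>0$ for $x>6$. Your substitution $\theta=\pi/x$ and use of the double-angle identity reduce the statement to the elementary fact $\sin\theta\le\tfrac12$ on $(0,\pi/6]$, avoiding derivatives entirely. The paper's approach has the minor advantage of being mechanical (no identity to spot), but yours is more transparent and makes the equality case immediate without appealing to strict positivity of a derivative.
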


\begin{proof}
Consider the function $f: [6,\infty)\to\mathbb{R}$ defined by $f(x) = \cos(\pi/x) - \sin(2\pi/x)$. Then simply note that we have $f(6) = 0$ and $f'(x) = \frac{\pi}{x^2}\sin(\pi/x) + \frac{2\pi}{x^2}\cos(2\pi/x) > 0$ for $x \ge 6$.
\end{proof}

\section{Proof of Lemma~\ref{lem:monotonic-1}}\label{sec:monotonic-1}

We want to prove Lemma~\ref{lem:monotonic-1}, i.e.\ that the monotonicity result for $s_{2l+1}$ presented in Conjecture~\ref{conj:monotonic} is true for $l = 1$. Throughout this section, we fix $l = 1$, and therefore $n \geq 6$. 

Given points $A,B\in \R^2$, we write $AB$ to denote both the edge and the corresponding edge length, where the meaning will be clear from context. We will also write terms such as $\sin(ABC)$ or $\cos(ABC)$ to mean the sines and cosines of the angle $\angle ABC$.

Let $x \in P_n$. Then $S_{2l+1}(x)$ is an inscribed equilateral triangle. We adopt some changes in notation for convenience. We define $PQR:=S_{2l+1}(x)$, i.e., we denote the vertices of $S_{2l+1}(x)$ by $P$, $Q$, and $R$.

\begin{figure}
\def\svgwidth{0.4\linewidth}
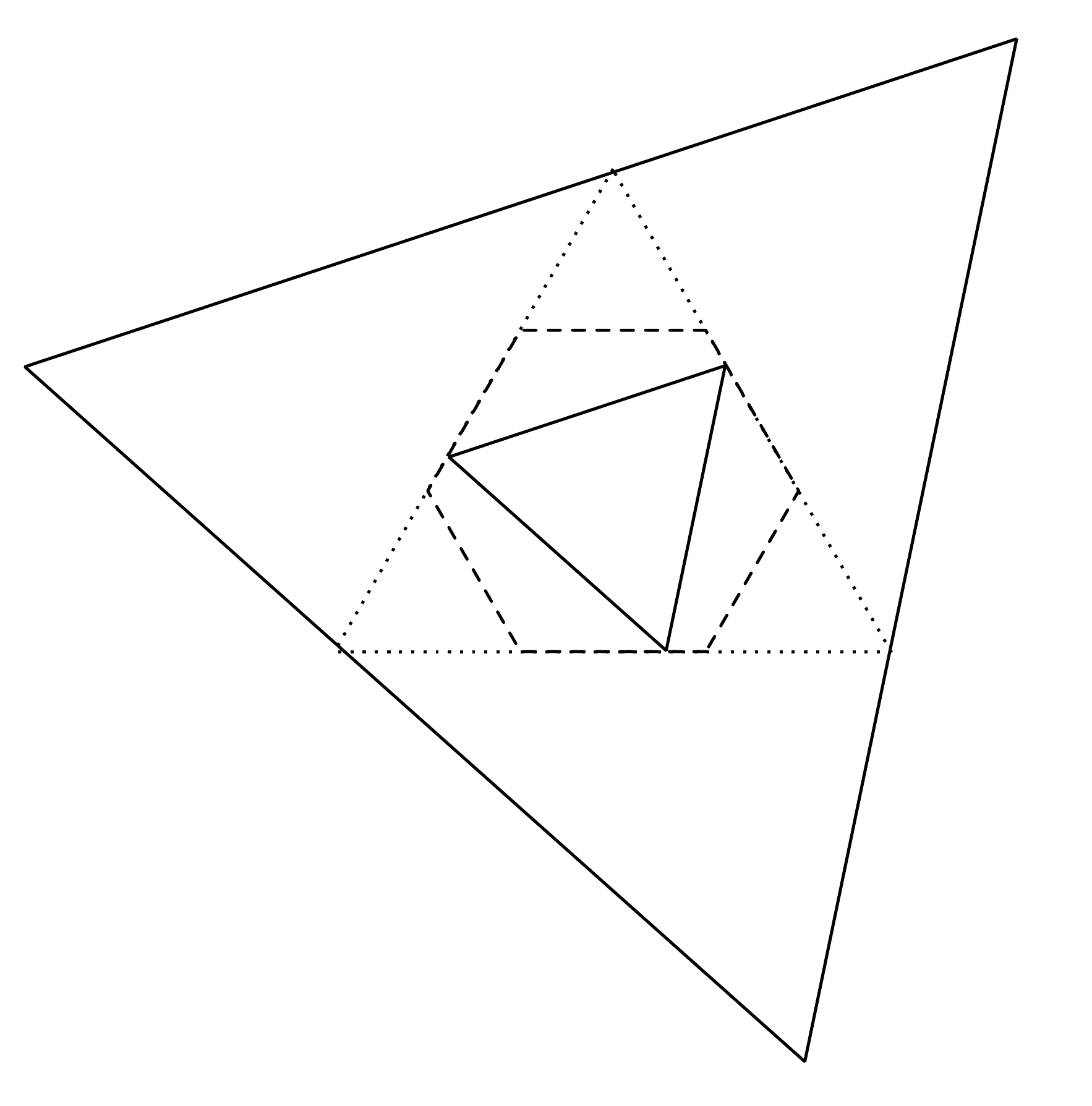
\caption{For any triangle $\triangle PQR$ inscribed in a polygon, the edges containing $P,Q,R$ can be extended to a triangle $\triangle ABC$ that contains $\triangle PQR$ as an inscribed triangle. Let $TUV$ be the equilateral triangle circumscribed about $ABC$ that is parallel to $PQR$.}
\label{fig:polygon-triangle-inscribed}
\end{figure}

The three edges of $P_n$ on which the three vertices $P$, $Q$, and $R$ lie can be extended to form a triangle $\triangle ABC$, as shown in Figure~\ref{fig:polygon-triangle-inscribed}. 
Let $TUV$ be the equilateral triangle circumscribed about $ABC$ that is parallel to $PQR$. We will show that the following relationship holds between the side lengths of the inscribed and circumscribed triangles: $RQ \cdot VU = k_{ABC}$, where $k_{ABC}$ is a constant depending only on triangle $ABC$. Moreover, the side length $VU$ is given by a function of cosine, which is monotonic between its extrema. This will give us monotonicity information about $RQ$ so long as $PQR$ is inscribed in $ABC$. 

We found these ideas in the writings of Philippe Chevanne~\cite{chevanne}. It is possible that these results are well-known; nevertheless, we repeat the constructions below.

\begin{claim}\label{claim:B1}
Using the notation as in Figure~\ref{fig:polygon-triangle-inscribed}, we have
\[\frac{AB}{RQ} = \frac{\sin(ARP)}{\sin(RAP)} + \frac{\sin(BQP)}{\sin(QBP)}.\]
\end{claim}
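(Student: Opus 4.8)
The plan is to reduce the identity to two applications of the law of sines in triangles that share the vertex $P$. First I would record the incidence data visible in Figure~\ref{fig:polygon-triangle-inscribed}: the three vertices of the inscribed equilateral triangle lie one on each side of $\triangle ABC$, with $P$ on $AB$, $Q$ on $BC$, and $R$ on $CA$. Since $P$ lies in the interior of the segment $AB$, the length $AB$ splits as $AB = AP + PB$, and the whole identity will follow by expressing $AP$ and $PB$ in terms of $RQ$.

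Next I would apply the law of sines in $\triangle ARP$ and in $\triangle BQP$ separately. In $\triangle ARP$ the side $AP$ is opposite the angle at $R$ and the side $RP$ is opposite the angle at $A$, so $\frac{AP}{\sin(ARP)} = \frac{RP}{\sin(RAP)}$ and hence $AP = RP\cdot\frac{\sin(ARP)}{\sin(RAP)}$. In $\triangle BQP$ the side $PB$ is opposite the angle at $Q$ and the side $QP$ is opposite the angle at $B$, giving $PB = QP\cdot\frac{\sin(BQP)}{\sin(QBP)}$.

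Finally I would substitute both expressions into $AB = AP + PB$ and invoke the fact that $\triangle PQR$ is equilateral (it is the inscribed star $S_{2l+1}(x)$ with $l=1$), so that $RP = QP = RQ$. Factoring $RQ$ out of the right-hand side and dividing through by $RQ$ then yields exactly $\frac{AB}{RQ} = \frac{\sin(ARP)}{\sin(RAP)} + \frac{\sin(BQP)}{\sin(QBP)}$.

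The computation itself is routine; the only point that deserves care, and the one I would state explicitly, is the positional claim that $P$ truly lies between $A$ and $B$ (so that $AB = AP + PB$ holds as an honest sum rather than a signed combination) and that the angles $\angle RAP$ and $\angle QBP$ are nondegenerate, so their sines are nonzero. Both follow from the construction of $\triangle ABC$ as the triangle obtained by extending the three distinct polygon edges carrying $P$, $Q$, $R$: this forces $P$, $Q$, $R$ onto three distinct sides and places each of $A$, $B$, $C$ outside the corresponding polygon edge, which pins down the orientation of the sub-triangles $\triangle ARP$ and $\triangle BQP$.
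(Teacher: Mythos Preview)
Your proof is correct and follows essentially the same route as the paper's: both split $AB = AP + PB$, apply the sine rule in $\triangle ARP$ and $\triangle BQP$, and then use $RP = QP = RQ$ from equilaterality to factor and divide. Your extra sentence justifying that $P$ genuinely lies between $A$ and $B$ is a welcome clarification the paper omits.
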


\begin{proof} Via the sine rule for triangles $ARP$ and $BQP$, we have
\[AB = AP + PB = PR\frac{\sin(ARP)}{\sin(RAP)} + PQ\frac{\sin(BQP)}{\sin(QBP)} = RQ\left(\frac{\sin(ARP)}{\sin(RAP)} + \frac{\sin(BQP)}{\sin(QBP)} \right), \]
where the last step follows since $RQ = PQ = PR$.
\end{proof}

\begin{claim} 
\label{cl:triangles-max-min}
\[RQ\cdot VU = AB^2\frac{\sin(ABC)}{\sin(ACB)}\frac{\sin(BAC)}{\sin(\pi/3)}.\]
\end{claim}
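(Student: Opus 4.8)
The plan is to combine Claim~\ref{claim:B1}, which controls $RQ$, with a companion formula controlling $VU$, and then to exploit the parallelism $TUV\parallel PQR$ to cancel all of the data depending on the particular inscribed triangle. The geometric content of the claim is precisely that these orientation-dependent factors cancel, leaving a quantity that depends only on $\triangle ABC$.

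First I would record the consequence of Claim~\ref{claim:B1}. Since $R,P$ lie on the edges $CA,AB$ through $A$ we have $\angle RAP=\angle BAC$, and similarly $\angle QBP=\angle ABC$. Clearing denominators in Claim~\ref{claim:B1} gives
\[
RQ=\frac{AB\,\sin(\angle BAC)\sin(\angle ABC)}{\sin(\angle ARP)\sin(\angle ABC)+\sin(\angle BQP)\sin(\angle BAC)}.
\]
Next I would derive an analogous expression for $VU$ directly, rather than re-invoking Claim~\ref{claim:B1} (whose equilateral triangle is the \emph{inner} one). Label $TUV$ so that $VU\parallel RQ$, $VT\parallel RP$, and $UT\parallel QP$; then $C$ lies on side $VU$, $A$ on $VT$, and $B$ on $UT$, so $VU=VC+CU$. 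Each corner triangle $\triangle VAC$ and $\triangle UBC$ has an angle $\pi/3$ at its equilateral vertex, i.e.\ $\angle AVC=\angle BUC=\pi/3$, so the law of sines gives $VC=\tfrac{AC\sin(\angle VAC)}{\sin(\pi/3)}$ and $CU=\tfrac{BC\sin(\angle UBC)}{\sin(\pi/3)}$. Substituting $AC=\tfrac{AB\sin(\angle ABC)}{\sin(\angle ACB)}$ and $BC=\tfrac{AB\sin(\angle BAC)}{\sin(\angle ACB)}$ from the law of sines in $\triangle ABC$, I obtain
\[
VU=\frac{AB}{\sin(\pi/3)\sin(\angle ACB)}\Bigl(\sin(\angle ABC)\sin(\angle VAC)+\sin(\angle BAC)\sin(\angle UBC)\Bigr).
\]

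The crux is identifying the orientation-dependent sines. Because $VT\parallel RP$ with $A$ on line $VT$ and $R$ on line $CA$, the line $CA$ is a transversal of the two parallel lines $RP$ and $AV$; by alternate (equivalently corresponding) angles, $\angle VAC$ and $\angle ARP$ are equal up to a supplement, hence $\sin(\angle VAC)=\sin(\angle ARP)$. The same argument, with transversal $CB$ of the parallels $QP$ and $BU$, yields $\sin(\angle UBC)=\sin(\angle BQP)$. With these identifications the numerator of the $VU$ formula becomes exactly the denominator appearing in the $RQ$ formula, so forming the product $RQ\cdot VU$ cancels that common factor and leaves
\[
RQ\cdot VU=AB^2\,\frac{\sin(\angle ABC)}{\sin(\angle ACB)}\,\frac{\sin(\angle BAC)}{\sin(\pi/3)},
\]
as claimed.

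The single step demanding care is this angle-chase: I must fix the correspondence of parallel sides between $PQR$ and $TUV$ correctly (so that $VU\parallel RQ$, $VT\parallel RP$, $UT\parallel QP$), confirm the configuration places $C$ on $VU$ between $V$ and $U$ (so that $VU=VC+CU$ rather than a difference), and handle the equal-versus-supplementary dichotomy of the transversal angles. The last point is harmless here, since only the sines of these angles enter the computation, but it is the place where one could easily make a sign error; everything else is routine trigonometry.
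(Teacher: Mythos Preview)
Your proof is correct and follows essentially the same route as the paper: both decompose $VU=VC+CU$, apply the sine rule in the corner triangles $\triangle VAC$ and $\triangle UBC$ (using the equilateral angle $\pi/3$), substitute $AC$ and $BC$ via the sine rule in $\triangle ABC$, identify $\sin(\angle VAC)=\sin(\angle ARP)$ and $\sin(\angle UBC)=\sin(\angle BQP)$ from the parallelism, and then invoke Claim~\ref{claim:B1} to collapse the orientation-dependent factor. The only cosmetic difference is that the paper carries the computation of $VU$ through and recognises the parenthesised factor as $AB/RQ$ at the end, whereas you first solve Claim~\ref{claim:B1} for $RQ$ and then cancel upon multiplication; the content is identical.
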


\begin{proof}
We use the sine rule along with the fact that $PQ$ is parallel to $TU$ and $PR$ is parallel to $TV$. Note
\begin{align*}
VU &= VC + CU\\
&= AC\frac{\sin(VAC)}{\sin(AVC)} + BC\frac{\sin(CBU)}{\sin(BUC)}\\
&= AC\frac{\sin(ARP)}{\sin(AVC)} + BC\frac{\sin(BQP)}{\sin(BUC)} \tag{parallel sides} \\
&= AB\frac{\sin(ABC)}{\sin(ACB)} \frac{\sin(ARP)}{\sin(AVC)} + AB\frac{\sin(BAC)}{\sin(ACB)}
\frac{\sin(BQP)}{\sin(BUC)} \tag{sine rule for $\triangle ABC$} \\
&= AB\frac{\sin(ABC)}{\sin(ACB)} \frac{\sin(ARP)}{\sin(\pi/3)} + AB\frac{\sin(BAC)}{\sin(ACB)}
\frac{\sin(BQP)}{\sin(\pi/3)}  \tag{$\triangle TUV$ is equilateral} \\
&= AB\frac{\sin(ABC)}{\sin(ACB)} \frac{\sin(BAC)}{\sin(\pi/3)} 
\left( \frac{\sin(ARP)}{\sin(BAC)} + \frac{\sin(BQP)}{\sin(ABC)} \right) \\
&= AB\frac{\sin(ABC)}{\sin(ACB)} \frac{\sin(BAC)}{\sin(\pi/3)} 
\left(\frac{\sin(ARP)}{\sin(RAP)} + \frac{\sin(BQP)}{\sin(QBP)}\right) \\
&= AB\frac{\sin(ABC)}{\sin(ACB)} \frac{\sin(BAC)}{\sin(\pi/3)} 
\left( \frac{AB}{RQ}\right).\tag{by Claim~\ref{claim:B1}} 
\end{align*}
\end{proof}

It follows that the sizes of the circumscribed and inscribed triangles are inversely proportional to each other. Next we show that the side length $UV$ varies as a function of cosine as the position of the inscribed triangle $\triangle PQR$ varies along $\triangle ABC$.

\begin{figure}
\includegraphics[width = 0.7\textwidth]{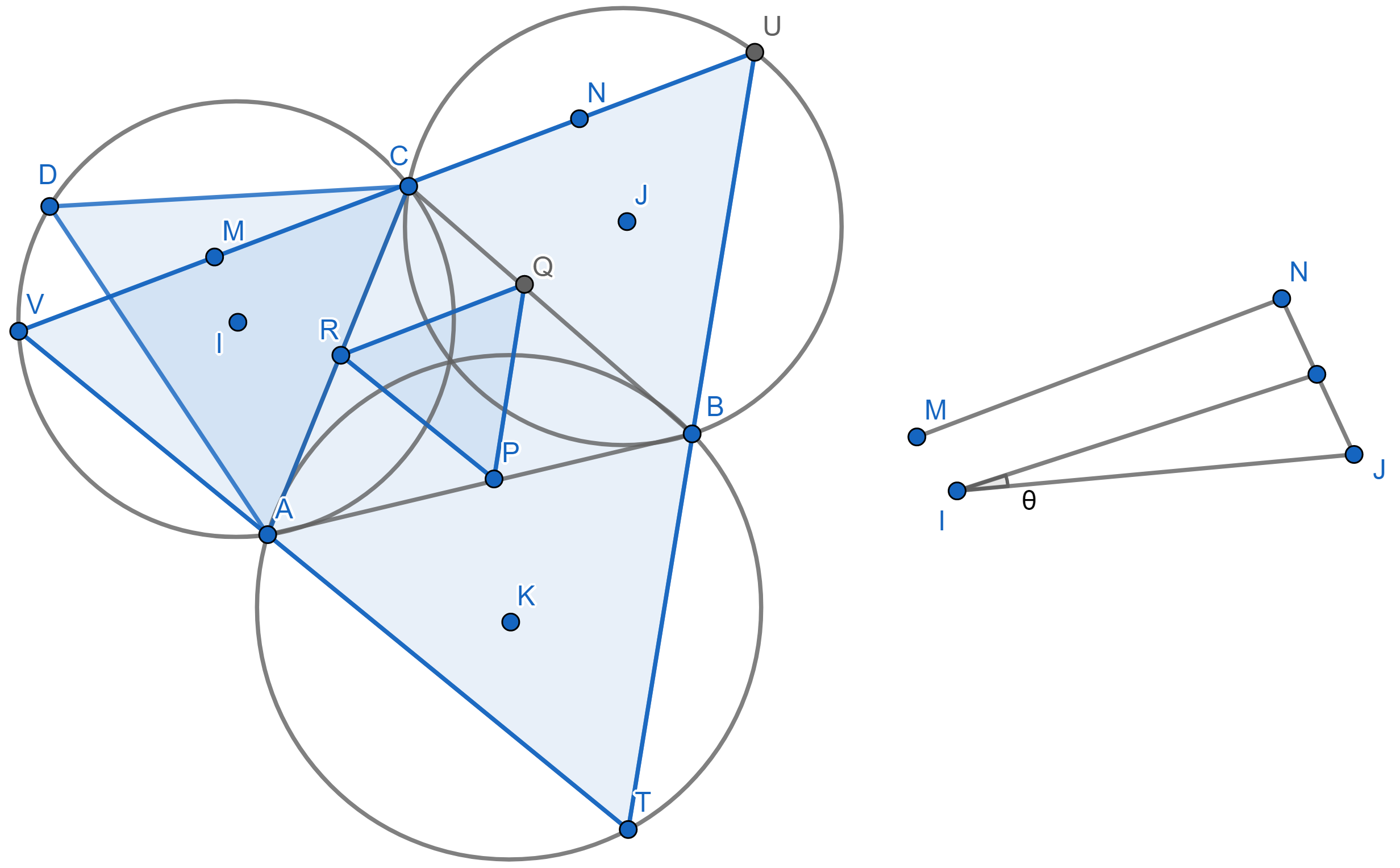}
\caption{(Left) $I$, $J$, and $K$ form the centers of the outer Napoleon triangle of $\triangle ABC$. The vertices $M$ and $N$ are midpoints of $VC$ and $CU$, respectively. (Right) angle between $IJ$ and $VU$.}
\label{fig:triangle-inscribed-napoleon}
\end{figure}

We construct the \emph{outer Napoleon triangle} of $\triangle ABC$. 
Consider an equilateral triangle $\triangle ACD$ such that $D$ lies outside $\triangle ABC$. Let $I$ denote the centroid of this triangle. Separately consider the circumscribed circle of the triangle $\triangle AVC$, as shown in Figure~\ref{fig:triangle-inscribed-napoleon}. Now $\angle AVC = \pi/3 = \angle ADC$. Thus by the inscribed angle theorem, $V$ and $D$ are both points on the arc of a circle that also contains $A$ and $C$. Furthermore, the center of this circle is precisely the centroid $I$ of the equilateral triangle $\triangle ACD$.

Similarly, one obtains the points $J,K$ depending only on $BC$ and $AB$, respectively. The equilateral triangle $\triangle IJK$ is the \emph{outer Napoleon triangle} of $\triangle ABC$. 

Next let $\theta$ denote the angle between the edge $IJ$ and the edge $VU$, as shown in the right of Figure~\ref{fig:triangle-inscribed-napoleon}.

\begin{claim} $VU = 2(IJ)\cos(\theta)$.
\end{claim}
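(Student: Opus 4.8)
The plan is to prove the equivalent statement that the orthogonal projection of the segment $IJ$ onto the line through $V$ and $U$ has length exactly $\tfrac12 VU$. Since $\theta$ is the angle between $IJ$ and $VU$, this projection has length $IJ\cos\theta$, so the identity $VU=2(IJ)\cos\theta$ follows at once once the projection length is computed.

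First I would record the key incidence underlying the picture: because $TUV$ is circumscribed about $ABC$ and is labelled compatibly with Figure~\ref{fig:triangle-inscribed-napoleon}, the vertex $C$ of $\triangle ABC$ lies on the side $VU$ of $\triangle TUV$. Hence $V$, $C$, $U$ are collinear with $C$ between $V$ and $U$, and the midpoints $M$ of $VC$ and $N$ of $CU$ also lie on the line $VU$. A one-line computation (placing $V,C,U$ at coordinates $0,c,VU$ along this line) gives $MN=\tfrac12 VU$, independently of the location of $C$.

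Next I would identify $M$ and $N$ as the feet of the perpendiculars from $I$ and $J$ to the line $VU$. Indeed, as established in the paragraph preceding the claim, $I$ is the center of the circle through $A$, $V$, $C$; since $IV=IC$ and $M$ is the midpoint of the chord $VC$, the radius-to-midpoint segment $IM$ is perpendicular to $VC$, hence to the line $VU$. The same argument applied to the circle centered at $J$ through $B$, $C$, $U$ shows $JN\perp VU$. Therefore the orthogonal projection of $I$ onto the line $VU$ is $M$, and that of $J$ is $N$ (regardless of which sides of $VU$ the points $I,J$ lie on), so the projection of the whole segment $IJ$ onto $VU$ is precisely the segment $MN$, of length $\tfrac12 VU$ by the previous step. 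Equating this projection length with $IJ\cos\theta$ (taking $\theta$ to be the acute angle between the two edges) yields $IJ\cos\theta=\tfrac12 VU$, which is the claim.

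The only genuinely delicate point is the incidence $C\in VU$, i.e.\ that the circumscribed triangle $TUV$ meets the vertex $C$ along the side opposite $T$; this is exactly what forces the chords $VC$ and $CU$ to lie along a single line and thereby makes both $IM$ and $JN$ perpendicular to $VU$. Everything after that is the classical fact that the segment joining the centers of two circles sharing a chord-line projects onto that line as the segment between the two chord midpoints. I do not anticipate any analytic difficulty: once the perpendicularity is in place the argument is purely synthetic, and no trigonometric estimate is needed.
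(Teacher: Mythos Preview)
Your proof is correct and follows essentially the same approach as the paper: both introduce the midpoints $M$ of $VC$ and $N$ of $CU$, observe $VU=2(MN)$, and conclude $MN=(IJ)\cos\theta$. The paper's proof is a two-line sketch that leaves the identity $MN=(IJ)\cos\theta$ unexplained; your argument supplies precisely the missing justification, namely that $IM\perp VU$ and $JN\perp VU$ (since $I,J$ are circumcenters of triangles with chords $VC,CU$), so that $M,N$ are the feet of the perpendiculars from $I,J$ and hence $MN$ is the orthogonal projection of $IJ$.
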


\begin{proof}
Let $M,N$ denote the midpoints of $VC$ and $CU$. Then $VU = 2(MN) = 2(IJ)\cos(\theta)$.
\end{proof}

Now $VU$ is maximized when $\theta = 0$, i.e.\ when $VU$ is parallel to $IJ$. By Claim~\ref{cl:triangles-max-min}, we have the following.

\begin{corollary}
\label{cor:max-min-napoleon}
$RQ$ is minimized when $VU$ is maximized, i.e.\ when $\theta = 0$, or equivalently, when $\triangle TUV$ and $\triangle PQR$ are parallel to the Napoleon triangle $\triangle IJK$ of $\triangle ABC$. Furthermore, $RQ$ is monotonically increasing away from its minimizer.
\end{corollary}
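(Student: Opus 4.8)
The plan is to combine the two claims immediately preceding the corollary, which together express $RQ$ as a positive constant multiple of $\sec\theta$, and then read off both the minimizer and the monotonicity from elementary properties of the secant. First I would record that by Claim~\ref{cl:triangles-max-min} the product satisfies
\[ RQ\cdot VU \;=\; k_{ABC} \;:=\; AB^2\,\frac{\sin(ABC)}{\sin(ACB)}\,\frac{\sin(BAC)}{\sin(\pi/3)}, \]
a strictly positive quantity depending only on the circumscribing triangle $\triangle ABC$. As the inscribed triangle $\triangle PQR$ ranges over the equilateral triangles inscribed in the fixed $\triangle ABC$, the three supporting edges do not change, so $\triangle ABC$, its outer Napoleon triangle $\triangle IJK$, and the length $IJ$ are all constant along this family; hence $k_{ABC}$ and $IJ$ are genuine constants. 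Substituting the preceding claim $VU = 2(IJ)\cos\theta$ then gives
\[ RQ \;=\; \frac{k_{ABC}}{VU} \;=\; \frac{k_{ABC}}{2\,(IJ)\cos\theta} \;=\; \frac{k_{ABC}}{2\,(IJ)}\,\sec\theta. \]

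From this single identity the corollary is immediate. Since $k_{ABC}$ and $IJ$ are positive and $\theta$ lies in $(-\tfrac{\pi}{2},\tfrac{\pi}{2})$, the factor $\sec\theta$ attains its unique minimum at $\theta=0$ and is strictly increasing in $|\theta|$; therefore $RQ$ is minimized exactly when $\theta=0$, i.e.\ when $VU$ is maximized, and $RQ$ increases strictly as $\theta$ moves away from $0$ in either direction. To interpret $\theta=0$ geometrically I would note that $\theta=0$ means $VU \parallel IJ$; since both $\triangle TUV$ and $\triangle IJK$ are equilateral, a single parallel pair of sides forces the two triples of edge directions to coincide, so $\triangle TUV \parallel \triangle IJK$, and because $\triangle PQR \parallel \triangle TUV$ by construction we also obtain $\triangle PQR \parallel \triangle IJK$, exactly as asserted.

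The one point deserving care — and where I expect the real work to lie when this corollary feeds into Lemma~\ref{lem:monotonic-1} — is the passage from monotonicity in the orientation angle $\theta$ to monotonicity in the position of the basepoint $x$ along $P_n$ (equivalently, in the barycentric coordinate $t$). The hard part will be verifying that, as $x$ traverses an arc between adjacent crossings so that $P,Q,R$ remain on the same three edges, the orientation $\theta$ of the inscribed equilateral triangle is a strictly monotonic function of $t$; this is the standard parametrization of the equilateral triangles inscribed in a fixed triangle by their orientation, but it must be checked that the valid sub-range of $t$ maps monotonically into $\theta$. Granting this, the strict monotonicity of $\theta \mapsto \sec\theta$ away from $\theta=0$ transfers directly to strict monotonicity of $t \mapsto RQ = s_{2l+1}(x)$ away from the symmetric configuration, which is precisely the behaviour the corollary records and which the case $l=1$ of Conjecture~\ref{conj:monotonic} requires.
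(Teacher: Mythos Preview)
Your proof is correct and follows exactly the approach the paper intends: the paper states the corollary as an immediate consequence of the two preceding claims (``By Claim~\ref{cl:triangles-max-min}, we have the following''), and you have simply written out that deduction explicitly by combining $RQ\cdot VU=k_{ABC}$ with $VU=2(IJ)\cos\theta$ to obtain $RQ=\tfrac{k_{ABC}}{2\,IJ}\sec\theta$. Your final paragraph about monotonicity of $\theta$ in $t$ is not needed for the corollary itself---that passage is handled in the subsequent claim in the paper---but it is a fair anticipation of where the argument goes next.
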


The next claim is the content of Conjecture~\ref{conj:monotonic}.

\begin{claim}
Every midpoint crossing is a global minimizer of $s_{2l+1}(x)$, every vertex crossing is a global maximizer of $s_{2l+1}(x)$, the midpoint and vertex crossings are interleaved around $P_n$, and $s_{2l+1}$ is strictly monotonic between adjacent midpoint and vertex crossings.
\end{claim}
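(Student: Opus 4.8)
The plan is to treat this final claim as a packaging of the unimodality already extracted from the Napoleon--triangle construction (Corollary~\ref{cor:max-min-napoleon}) together with the counting of crossings, rather than as a fresh geometric computation. First I would observe that as the basepoint $x$ traverses $P_n$ counterclockwise, each of the three vertices of the inscribed equilateral triangle $S_{2l+1}(x)$ with vertex set $\{P,Q,R\}$ moves along a single edge of $P_n$, and the supporting edge of a given vertex changes exactly when that vertex passes through a corner of $P_n$, i.e.\ exactly at a vertex crossing. Hence between two consecutive vertex crossings the three supporting edges are fixed, so the extended triangle $\triangle ABC$ is constant and $\triangle PQR$ sweeps monotonically through the one-parameter family of equilateral triangles inscribed in this fixed $\triangle ABC$. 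Corollary~\ref{cor:max-min-napoleon} then applies on each such interval: with $l=1$, the function $s_{2l+1}$ is either strictly monotone or strictly unimodal (strictly decreasing, then strictly increasing, with a single interior minimum).

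Next I would invoke the symmetry already recorded in Lemma~\ref{lem:crossing-extrema}: all vertex crossings realize one common value $V$ of $s_{2l+1}$ and all midpoint crossings realize one common value $M$. Since the two endpoints of every fixed-$\triangle ABC$ interval are vertex crossings, they carry the \emph{same} value $V$; a strictly monotone function cannot agree at its two endpoints, so on each interval Corollary~\ref{cor:max-min-napoleon} must be in the strictly unimodal case, producing exactly one interior minimum whose value is strictly below $V$. Consequently the local maxima of $s_{2l+1}$ are precisely the vertex crossings, and the only local minima are these interior minima, one per interval. Counting then gives that the number of intervals equals the number of vertex crossings, which by Lemma~\ref{lem:num-crossings} is $\lcm(n,2l+1)$, while the number of midpoint crossings is the same $\lcm(n,2l+1)$. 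Finally, Lemma~\ref{lem:crossing-extrema} makes each midpoint crossing a local extremum, Lemma~\ref{lem:vertex-midpt-impossible} keeps it disjoint from the vertex crossings, and since the local maxima are exactly the vertex crossings, every midpoint crossing is forced to be a local minimum. As the midpoint crossings and the interior minima are two sets of local minima of equal cardinality with the former contained in the latter, they coincide, so each interval's minimum is exactly its midpoint crossing. This yields all four assertions at once: vertex crossings are the global maxima with common value $V$, midpoint crossings are the global minima with common value $M<V$, they alternate around $P_n$ (one interior minimum strictly between consecutive interval endpoints), and the strict monotonicity between adjacent crossings is precisely the strict unimodality from Corollary~\ref{cor:max-min-napoleon}.

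The main obstacle I anticipate lies not in the counting but in rigorously justifying that Corollary~\ref{cor:max-min-napoleon} transfers to the motion of $x$ along $P_n$: one must check that $\triangle ABC$ is genuinely constant on the open arc between consecutive vertex crossings and that $x\mapsto\triangle PQR$ traverses the inscribed family monotonically (using continuity of $g_r$), so that the orientation parameter governing Corollary~\ref{cor:max-min-napoleon} is a monotone reparametrization of $x$. A secondary delicate point is the upgrade from ``monotone or unimodal'' to ``genuinely unimodal with an interior minimum,'' which I would secure precisely through the equal-endpoint-value observation above rather than through any further geometry. Once these two points are in place, the identification of the extrema and the interleaving follow formally from Lemmas~\ref{lem:crossing-extrema}, \ref{lem:vertex-midpt-impossible}, and~\ref{lem:num-crossings}.
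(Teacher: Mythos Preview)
Your argument is correct and rests on the same geometric core as the paper (Corollary~\ref{cor:max-min-napoleon} together with Lemmas~\ref{lem:crossing-extrema}, \ref{lem:vertex-midpt-impossible}, \ref{lem:num-crossings}), but the deductive organization is genuinely different. The paper starts at a midpoint crossing, argues by contradiction that it must sit at the Napoleon-parallel position (hence is a local minimum), then walks counterclockwise around $P_n$, showing that the next crossing encountered is a vertex crossing (local maximum), the one after that a midpoint crossing, and so on. You instead partition $P_n$ by the vertex crossings, observe that on each resulting arc $\triangle ABC$ is fixed so Corollary~\ref{cor:max-min-napoleon} applies, and then use the equal-endpoint-value trick to force strict unimodality on every arc at once; the identification of the interior minima with the midpoint crossings then falls out of a cardinality match. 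Your route is cleaner in that it never needs the paper's local contradiction at a midpoint crossing, and it makes the interleaving a corollary of counting rather than of an inductive traversal. The paper's route, on the other hand, is slightly more self-contained in that it does not need to invoke the exact count $\lcm(n,2l+1)$ from Lemma~\ref{lem:num-crossings}; it only needs that vertex crossings share a common $s_{2l+1}$-value and likewise for midpoint crossings. Your identification of the two technical pressure points (constancy of $\triangle ABC$ between consecutive vertex crossings, and monotone reparametrization of the inscribed family by $x$) is exactly right and matches what the paper glosses over.
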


\begin{proof}
Let $x_0$ denote a midpoint crossing, and let $P,Q,R$ denote the vertices of $S_{2l+1}(x_0)$. By Lemma~\ref{lem:crossing-extrema}, $x_0$ is a local extrema of $s_{2l+1}(x)$. We claim that $s_{2l+1}$ has a local minimum at $x_0$. 
Towards a contradiction, suppose $s_{2l+1}$ has a local maximum at $x_0$. We know by Lemma~\ref{lem:vertex-midpt-impossible} that $x_0$ is not a vertex crossing, thus there exists a small $P_n$-neighborhood of $x_0$ along which $x$ can vary so that the vertices of $S_{2l+1}(x)$ and $S_{2l+1}(x_0)$ belong to the same edges of $P_n$. 
In particular, these vertices belong to the same triangle $\triangle ABC$ obtained by extending the edges of $P_n$. Consider the Napoleon triangle $\triangle IJK$ of $\triangle ABC$. 
By the previous work and the assumption that $s_{2l+1}$ does not have a local minimum at $x_0$, we know that $\triangle IJK$ is not parallel to $\triangle PQR$. In a small neighborhood of $x_0$, as $x$ traverses $x_0$ in a counterclockwise direction, $S_{2l+1}(x)$ becomes either more parallel or less parallel to $\triangle IJK$. Thus our previous work shows that $s_{2l+1}(x)$ varies strictly monotonically. But this contradicts the assumption that $s_{2l+1}$ has a local maximum at $x_0$. It follows that $s_{2l+1}$ has a local minimum at $x_0$, and moreover that $\triangle PQR$ is parallel to $\triangle IJK$. 

As we move the point $x\in P_n$ counterclockwise starting at midpoint crossing $x_0$, we know by Corollary~\ref{cor:max-min-napoleon} that $s_{2l+1}(x)$ is strictly increasing until $x$ reaches a vertex crossing $x_1$. At this point there are multiple triangles (each with its own Napoleon triangle) inside which $S_{2l+1}(x_1)$ can be inscribed. By symmetry and Lemma~\ref{lem:crossing-extrema}, we know that $s_{2l+1}(x)$ decreases again after $x$ passes $x_1$. In particular, $s_{2l+1}$ has a local maximum at the vertex crossing $x_1$. 

We claim that the next crossing $x_2$ hit by $x$ during this counterclockwise traversal is a midpoint crossing. Towards a contradiction, suppose $x_2$ is another vertex crossing. Since $x_1$ was a local maximizer and $s_{2l+1}$ is strictly monotonic, we know that $s_{2l+1}(x)$ strictly decreases as $x$ approaches $x_2$. But the value of $s_{2l+1}$ must be equal at all vertex crossings by Lemma~\ref{lem:crossing-extrema}, and thus $x_2$ cannot be a vertex crossing. It follows that $x_2$ is a midpoint crossing. 

By Lemma~\ref{lem:crossing-extrema}, we know that $s_{2l+1}$ has the same value at all vertex crossings and all midpoint crossings, respectively. Thus the local extrema are all global extrema. We have shown that vertex and midpoint crossings are interleaved around $P_n$, vertex crossings are global maximizers of $s_{2l+1}$, midpoint crossings are global minimizers of $s_{2l+1}$, and that $s_{2l+1}$ is strictly monotonic between midpoint and vertex crossings that are adjacent in $P_n$. This concludes the proof. \end{proof}

\end{document}